\long\def\symbolfootnote[#1]#2{\begingroup%
\def\thefootnote{\fnsymbol{footnote}}\footnote[#1]{#2}\endgroup}
\definecolor{Red}{rgb}{0.7,0,0.1}
\definecolor{Green}{rgb}{0,0.7,0}
\title{Time Discrete Approximation of Weak Solutions for Stochastic Equations of Geophysical Fluid
  Dynamics and Applications}
\author{Nathan Glatt-Holtz$^{\sharp}$,  Roger Temam${^\flat}$,  Chuntian Wang${^\flat}$\\
 \small{$^{\sharp}$ {Department} of Mathematics, Virginia Polytechnic and State University}\\
 \small{Blacksburg, VA 24061}\\
 \small{${^\flat}$ Department of Mathematics and The Institute for Scientific Computing and Applied Mathematics}\\
  \small{Indiana University, Bloomington, IN 47405}\\
 \small{emails:
   \url{negh@vt.edu}, \url{temam@indiana.edu}, \url{wang211@indiana.edu}}}
\numberwithin{equation}{section}
\newtheorem{Thm}{Theorem}[section]
\newtheorem{Lem}{Lemma}[section]
\newtheorem{Prop}{Proposition}[section]
\newtheorem{Cor}{Corollary}[section]
\newtheorem{Def}{Definition}[section]
\newtheorem{Rmk}{Remark}[section]
\newcommand{\pd}[1]{\partial_{#1}}
\newcommand{\indFn}[1]{1 \! \! 1_{#1}}
\newcommand{\E}{\mathbb{E}}
\newcommand{\Prb}{\mathbb{P}}
\newcommand{\vel}{\mathbf{v}}
\newcommand{\spX}{\mathbf{x}}
\newcommand{\RR}{\mathbb{R}}
\newcommand{\LP}{\Pi}
\newcommand{\SDT}{\xi}
\newcommand{\SDvar}{s}
\newcommand{\todo}[1]
{{\textcolor{Red}{{\bf [TO DO]}}}}
\newcommand{\homework}[1]{}
\newcommand{\nohomework}[1]{}
\newcommand{\deltaN}{\Delta t}
\newcommand{\mathfrakV}{U^{\sharp}}
\begin{document}

\maketitle

\markboth{Time Discrete Approximation of Weak Solutions for Stochastic Equations of GFD and Applications}
  {Time Discrete Approximation of Weak Solutions for Stochastic Equations of GFD and Applications}

\begin{abstract}
\noindent    As a first step towards the numerical analysis of the stochastic primitive equations of the atmosphere and 
oceans, we study their time discretization by an implicit Euler scheme.
From
deterministic 
 viewpoint the  3D
Primitive Equations are studied 
with 
physically
realistic boundary conditions.
From 
 probabilistic viewpoint we consider a wide class of nonlinear, state dependent,
white noise forcings.
The proof of convergence of the Euler scheme
covers the equations for the oceans,
atmosphere, 
coupled oceanic-atmospheric system
and other
geophysical equations.  We obtain the existence of solutions
 weak in 
PDE and probabilistic sense, 
a result 
which is 
new 
by itself 
to the best of our knowledge.
\end{abstract}

{\noindent \small
  {\it \bf Keywords:} Nonlinear Stochastic Partial Differential Equations, Geophysical Fluid Dynamics, Primitive Equations, Discrete Time Approximation, Martingale Solutions,  Numerical Analysis of Stochastic PDEs.\\ \\
  {\it \bf MSC2010:} 35Q86, 60H15, 35Q35}

\tableofcontents

\newpage

\section{Introduction}
\label{sec:introduction}

The primitive equations of the oceans and atmosphere (PEs) are a fundamental model for the
large scale fluid flows forming the analytical core of the most advanced general circulation models (GCMs)
in use today.  In recent years these systems have been a subject of considerable interest in the mathematical community
not only because of their wide significance in geophysical applications but also for their delicate nonlinear, nonlocal,
anisotropic structure and as a cousin to the other basic equations of mathematical fluid dynamics, namely  the incompressible
Navier-Stokes and Euler equations.

In this work we study a stochastic version of the PEs and develop  techniques which may be viewed
as a first step toward their numerical analysis.  From the point of view of applications, this work is motivated
by a plea from the geophysical community to further develop
the theory of nonlinear Stochastic Partial Differential Equations (SPDEs) in a large scale fluid dynamics context
and in general, \cite{RozovskiiTemamTribbia_Conf}. Indeed, in view of the many sources of uncertainty
both physical and numerical which are typically encountered by the modeler, stochastic techniques are playing an increasingly central role
in the study of geophysical fluid dynamics. See e.g. \cite{Hasselmann1, Rose1, LeslieQuarini1,MasonThomson,
PenlandSardeshmukh, PenlandEwald1,EwaldPenland2,
BernerShuttsLeutbecherPalmer, ZidikheriFrederiksen} and also \cite{GlattTemamTribbia1} for a small
sampling of this vast literature.

The primitive equations trace their origins to the beginning of the
20th century with the seminal works of V. Bjerknes  and L. F. Richardson
(\cite{Bjerknes1,Richardson1}) and have played
a central role in the development of climate modeling and weather prediction since that time,
\cite{Pedlosky}. To the best of our knowledge, the  development of the mathematical theory for the deterministic PEs
began in the early 1990's with a series of articles by J. L. Loins, R. Temam and S. Wang,
 \cite{LionsTemamWang1,LionsTemamWang2,
LionsTemamWang3}.   This direction in mathematical geophysics is now a fairly well developed
subject with results guaranteeing the global existence of
weak solutions which are bounded in $L^{2}_{\spX}$, \cite{LionsTemamWang1} and the global
existence and uniqueness of strong solutions, i.e. solutions evolving continuously in $H^{1}_{\spX}$, \cite{Kobelkov,Kobelkov2007,CaoTiti,ZianeKukavica}.
Of course, these latter developments stand in striking contrast to the current state of the art for
the Navier-Stokes equations as proving the global existence and uniqueness of strong solutions is
tantamount to solving the famous Clay problem.
For further background on the deterministic mathematical theory see the recent surveys
\cite{PetcuTemamZiane,RousseauTemamTribbia}.

Recently, significant efforts have been made to establish suitable analogues of the
above (deterministic) mathematical results in a stochastic setting.
In a series of works, \cite{EwaldPetcuTemam,GlattZiane1,GuoHuang2008,GlattTemam1,
GlattTemam2,DebusscheGlattTemam1,DebusscheGlattTemamZiane1},
the mathematical theory of strong, pathwise\footnote{Here pathwise refers to the fact that
solutions are found relative to a prescribed driving noise.  In this article we will use the
terms `pathwise' and `martingale' as opposed to the alternate terminology of `weak' and `strong' solutions to avoid confusion with the typical
PDE terminology for which weak solutions are, roughly speaking, those in $L^\infty_t (L^2_x)$ and strong solutions are those in $L_t^\infty (H_x^1)$.}
solutions has been developed.
These recent works more or less bring this aspect of the subject
to the state of the art, that is they establish, in increasingly physically realistic
settings,  the global existence and uniqueness of solutions evolving continuously in $H^{1}_{\spX}$.

Notwithstanding the above cited body of works, many aspects of the stochastic theory still need
further consideration. In this article we develop existence results for \emph{weak solutions}, that is solutions
which remain bounded in time only in $L^{2}_{\spX}$.
This is a direction which, to the best of our knowledge, remained unaddressed previously.  Since such
`weak solutions' are not expected to be unique, even in the deterministic setting, it is natural to work within the framework
of \emph{martingale solutions}. In other words we consider below solutions which are weak in both the sense of
PDE theory and stochastic analysis.

One particular advantage of this weak-martingale setting is that it allows us to
consider physical situations unattainable so far in the above cited works on strong (or strong-pathwise) solutions.
From the deterministic point of view we obtain results for the case of inhomogenous, physically realistic boundary
conditions.  On the other  hand, from the stochastic viewpoint our results cover a very general
class of state-dependent (multiplicative) noise structures.  In particular these
noise terms may be interpreted in either the It\=o or Stratonovich sense.  The
later Stratonovich interpretation of noise is important as it may be more realistic in
geophysical settings.  See e.g. \cite{HorsthemkeLefever1}, \cite{Penland2003} for
further details.
Note that we develop our analysis in a slightly abstract setting which at once allows
us to treat the PEs of the oceans, the atmosphere and the coupled
oceanic/atmospheric system.\footnote{We have previously taken such an abstract approach
in other work on the stochastic primitive equations, \cite{DebusscheGlattTemam1}.
There however our focus was on the local existence of strong, \emph{pathwise} solutions
and that framework was, by necessity, more restrictive with respect to domains, noise structures,
etc.}

While the results established here take an important further step in the development of
the analytical theory for the PEs  we believe the main contribution of this article relates to numerical considerations.
The approach below centers on an implicit Euler (i.e. time discrete) scheme
and we choose this set-up mainly because it may be seen as a mathematical setting suitable for the development
of tools needed for the \emph{numerical analysis} of the stochastic PEs and other nonlinear SPDEs arising in fluid
dynamics.  Note that while discrete time approximation has been previously employed
in \cite{DeBouardDebussche2004, DebusschePrintems2006},
these works treat hyperbolic type systems and only address the case of an additive noise.
As such, a number of the techniques developed here,
play a crucial role in a work related to the stability
and consistency of a class of numerical schemes (both explicit and semi-implicit)
for the 2D and 3D stochastic Navier-Stokes equations, \cite{GlattTemam2011}.

Let us now finally turn to sketch some of the main technical challenges and contributions of the article.
In fact the first main difficulty is to justify the validity of the implicit scheme on which our analysis
centers.   While classical arguments involving the Brouwer fixed point theorem can be used
to establish the existence of sequences satisfying the implicit scheme, we crucially need that these
sequences are \emph{adapted} to the driving noise.  To address this concern we rely on  {a specifically chosen filtration and} a suitable measurable selection theorem
from  \cite{BensoussanTemam} (see also \cite{KuratowskiRyllNardzewski1965}, \cite{Castaing1967}).

With suitable solutions to the semi-implicit scheme in hand, basic uniform estimates proceed
analogously to the continuous time case with the use of martingale inequalities, etc.
In contrast to previous works on Martingale solutions  (see e.g.
\cite{Bensoussan1,FlandoliGatarek1,MenaldiSritharan,
DebusscheGlattTemam1,GlattVicol2011}) we circumvent  the need for higher moments
with suitable stopping time arguments.
Another difficulty related to the concern that solutions be adapted appears when we associate
continuous time processes with the discrete time schemes in pursuit of compactness and the passage to the limit.
In contrast to the deterministic case, \cite{Temam1},  \cite{MarionTemam1} we must introduce processes
which are lagged by a time step.  While these processes are indeed adapted, we obtain a time
evolution equation with troublesome error terms.  In turn these error terms prevent us from addressing
compactness directly from the equations and  force us to carry out the compactness arguments for a
series of interrelated processes.

\vskip 1.2 mm

\noindent{\textbf{{Organization of the Article}}}

\vskip 1.2 mm

The exposition is organized as follows.  In Section~\ref{sec:setup}
we outline an abstract, functional-analytic framework for the stochastic Primitive
Equations (and related evolution systems) which may be seen as an
``axiomatic''; basis for the rest of the work.   The section concludes
by recalling the basic notion of Martingale solutions within the context of this framework.
In Section~\ref{sec:DiscreteScheme}
we introduce an implicit Euler scheme which discretizes the equations
in time.  The details of the existence of suitable solutions ({adapted to the specific  filtration})  of this implicit scheme
along with associated
uniform estimates are given in Propositions~\ref{thm:ExistenceSelectionThm}
and \ref{thm:UniformBnds} respectively.  In Section~\ref{sec:ContTimeProcesses}
we study some continuous time processes associated with the implicit
Euler scheme introduced in Section~\ref{sec:DiscreteScheme}.
Section~\ref{sec:TightnessAndLimit} then outlines the compactness
(tightness) arguments that allow us to pass to the limit
and derive the existence of solutions from these approximating
continuous time processes.  Finally, Section~\ref{sec:GeophysicsExample}
provides extended details connecting the abstract results that we just derived with
the concrete example of the primitive equations of the oceans.  In this section
we also provide a number of examples of possible types of nonlinear state dependent
noises covered under the main abstract results.
In the interest of making the manuscript
as self-contained as possible an Appendix (Section~\ref{sec:Appendix}) collects various
technical tools used in the course of our analysis.

\section{The Abstract Problem Set-Up}
\label{sec:setup}

We begin by describing the setting for the abstract evolution equation that
we will study below (cf. \eqref{eq:PEsAbstractFormulation} at the end of this Section).
As we noted in the introduction, we take this point of view in order to systematically
treat the existence of weak solutions
for a class of geophysical fluids equations including but not limited to
the example \eqref{eq:PE3DBasic}--\eqref{eq:3dPEBCTop}
developed below in Section~\ref{sec:GeophysicsExample}.
For further details about how to cast other related equations
of geophysical fluid dynamics in the following abstract formulation we refer the reader
to \cite{PetcuTemamZiane} and the references therein.

Throughout what follows we fix a Gelfand-Lions inclusion of Hilbert spaces
\begin{align}
V_{(3)} \subset V_{(2)} \subset V \subset H \subset V' \subset V_{(2)}' \subset V_{(3)}'.
\label{eq:BasicSpaces}
\end{align}
Each space is densely, continuously and compactly embedded in the next one.
We will
denote the norms for $H$ and $V$ by $| \cdot |$ and $\| \cdot \|$
and the remaining spaces simply by e.g. $\| \cdot \|_{V_{(2)}'}$.
When the context is clear, we will denote the dual pairing
between $V', V$, $V_{(2)}', V_{(2)}$ or $V_{(3)}',V_{(3)}$ by $\langle \cdot , \cdot \rangle$.

\subsection{Basic Operators}
\label{sec:BasicOperators}

We now outline the main elements, a collection of abstract operators, which we use to build
the stochastic evolution \eqref{eq:PEsAbstractFormulation} below.
We suppose we are given:
\begin{itemize}
\item A linear continuous operator $A: V \mapsto V'$ which defines a bilinear continuous form
$a(U, {\mathfrakV}) := \langle AU,  {\mathfrakV} \rangle_{V', V}$ on $V$.
We assume that $a$ is \emph{coercive}, i.e.
\begin{align}
	a(U,U) \geq c_{1} \|U\|^{2} \quad \textrm{ for all } U \in V.
	\label{eq:AvarDef}
\end{align}
This term will typically capture the \emph{diffusive terms} in the concrete equations: molecular and eddy viscosity,
diffusion of heat, salt, humidity etc.\footnote{In previous works on the Stochastic PEs, \cite{GlattTemam1,
GlattTemam2, DebusscheGlattTemam1} we required that this $a$ be symmetric.  In particular such a symmetry was
strongly used in these previous works so that we could apply the spectral theorem to the inverse of an associated operator $A^{-1}$. This is not needed for the arguments presented here and we therefore  revert to the more general weak formulation of the PEs given in \cite{PetcuTemamZiane}.}

\item A second linear operator $E$ continuous on both $H$ and $V$;  $E$ defines a bilinear continuous form
$e(U,  {\mathfrakV}) := (E U,  {\mathfrakV})$ on $H$ (which is also continuous on $V$). We suppose furthermore that $e$ is antisymmetric, that is
\begin{align}
	e(U,U) = 0 \quad \textrm{ for all } U \in H.
	\label{eq:eCancelation}
\end{align}
This term $E$ appears in applications to account for the \emph{Coriolis (rotational)} forces coming from the rotation of the earth.

\item A bilinear form $B$ which continuously maps $V \times V$ into $V_{(2)}'$;
$B$ gives rise to an associated trilinear form
$b(U,  {U^\flat},  {\mathfrakV}) := \langle B(U,  {U^\flat}),  {\mathfrakV}\rangle$
which satisfies the estimates
\begin{align}
|b(U, {U^\flat}, {\mathfrakV})| \leq
c_{2}
\| U \| |  {U^\flat} |^{1/2} \|  {U^\flat} \|^{1/2}  \|  {\mathfrakV} \|_{V_{(2)}}&
\quad \textrm{ for all }
U,  {U^\flat} \in V,  {\mathfrakV} \in V_{(2)}.
\label{eq:SizeEstimatesForB}
\end{align}
Moreover we assume the antisymmetry property
\begin{align}
b(U,\tilde{U}, \tilde{U})  = 0
 \quad \textrm{ for all } U \in V,  \tilde{U} \in V_{(2)}.
 \label{eq:BCan}
\end{align}
Note that, in particular, we may infer from \eqref{eq:SizeEstimatesForB} that
\begin{align}
	\|B(U)\|_{V_{(2)}'} \leq c_{2} |U|^{1/2} \|U\|^{3/2}
	\quad  \textrm{ for any } U \in V.
	\label{eq:BSizeV2Prime}
\end{align}
Furthermore, we infer from \eqref{eq:SizeEstimatesForB}, \eqref{eq:BCan}
we may assume that $B$ is continuous from $V \times V_{(2)}$ into $V'$
and satisfies
\begin{align}
 	\| B(U) \|_{V'} \leq c_{2} \| U\| \|U \|_{V_{(2)}}
	\quad \text{ for all } U \in V_{(2)}.
	\label{eq:BSizeHNorm}
\end{align}
Finally we impose some additional technical convergence conditions
on $b$.  Firstly we suppose that
when $U_{k}$ converges weakly to $U$ in $V$
then, up to a subsequence $k'$,
\begin{align}
b(U_{k'}, U_{k'},  {\mathfrakV}) \rightarrow b(U,U, {\mathfrakV})
\quad \textrm{ for each }  {\mathfrakV} \in V_{(2)}.
\label{eq:WeakLimCond}
\end{align}
Similarly we assume that if, for some $T > 0$,
\begin{align*}
	U_k \rightarrow U& \quad \textrm{weakly in } L^2( 0,T ;V)
	\textrm{ and strongly in } L^2( 0,T;H),
\end{align*}
then, again up to a subsequence $k'$,
\begin{align}
\int_0^Tb(U_{k'}, U_{k'}, {\mathfrakV}) dt \rightarrow \int_0^Tb(U,U,{\mathfrakV}) dt
\quad \textrm{ for each } {\mathfrakV} \in L^\infty(0,T;V_{(3)}).
\label{eq:WeakLimCondTimeInt}
\end{align}
$B$ accounts for the main \emph{nonlinear (convective)} terms in the equations.
\item An externally given element $\ell$. We consider $\ell$ to be random
in general; it is specified only as a probability distribution on $L^{2}_{loc}(0,\infty;V')$ subject to
the second moment condition \eqref{eq:DataMomentAssumptions} given below.
This term $\ell$ captures various inhomogeneous elements i.e. externally determined body forcings,
boundary forcings etc.
\end{itemize}

\noindent In order to define the operators involving the `stochastic
terms' in the equations we consider an auxiliary space $\mathfrak{U}$, on
which the underlying driving noise, a cylindrical Brownian motion
$W$ evolves (see Section~\ref{sec:SomeStochasticAnal}
below).
We suppose $\mathfrak{U}$ is a separable Hilbert space and use
$L_2(\mathfrak{U}, X)$ to denote the space of Hilbert-Schmidt operators
from $\mathfrak{U}$ into $X$, where, for example $X = H, V$ or  $\mathbb{R}$.  Sometimes
we will abbreviate and write $L_{2}:= L_{2}(\mathfrak{U},\mathbb{R})$.

Returning to the list of operators we suppose we have defined:
\begin{itemize}
\item A (possibly nonlinear) continuous map $\sigma: [0,\infty) \times H  \mapsto L_2(\mathfrak{U}, H)$.
We suppose  that $\sigma$ is \emph{uniformly sublinear}, i.e.
\begin{align}
  | \sigma(t,U) |_{L_2(\mathfrak{U}, H)} \leq c_3 (1 + |U|), \quad \textrm{ for every } U \in H
  \textrm{ and } t \in \RR^{+},
  \label{eq:SublinearCondH}
\end{align}
where the constant $c_{3} >0$ is independent of $t \in [0,\infty)$.
For economy of notation we will frequently drop the dependence
on $t$ in the exposition below. We define  $g : [0,\infty) \times H \times H \mapsto L_{2}$ according
to $g(t, U, {\mathfrakV}) = (\sigma(t,U), {\mathfrakV})$ for $U, {\mathfrakV}
\in H$. The element $\sigma$
determines the structure of the (volumic) stochastic forcing applied to the
equations.  These stochastic terms typically appear to
account for various sources of physical, empirical and numerical
uncertainty as we described in the introduction.
\item A continuous map $\SDT: [0,\infty) \times H \mapsto H$
which is subject to the uniform sublinear condition
\begin{align}
	|\SDT(t,U)| \leq c_{4} (1 + |U|), \quad \textrm{ for every } U \in H
	\textrm{ and } t \in \RR^{+},
	\label{eq:SublinearConditionsOnSDTerm}
\end{align}
where $c_{4} > 0$ does not depend on $t \geq 0$.  We define
$\SDvar: [0,\infty) \times H \times H \mapsto \RR$
by
\begin{equation}\label{def:s}
\SDvar(t,U, {\mathfrakV}) = (\SDT(t,U),{\mathfrakV})
\end{equation}
for $U, {\mathfrakV} \in H$.
We include $\SDT$ in the abstract formulation to allow, in particular,
for the treatment of a class of \emph{Stratonovich noises}; $\SDT$ arises
when we convert from a Stratonovich into an It\=o type noise.  This term
$S$ therefore allows us to carry out the forthcoming analysis entirely within the
It\=o framework.  See
Remarks~\ref{rmk:LetsBoastABit},~\ref{rmk:StratFromalSoFar} below.
\end{itemize}

With the above abstract framework now in place we may reduce
the problem  \eqref{eq:PE3DBasic}--\eqref{eq:3dPEBCTop} below
(and related equations)
to studying the following abstract stochastic  evolution equation in $V_{(2)}'$,
namely,
\begin{align}
  dU + (AU + B(U) + EU)\,dt = (\ell + \SDT(U))\, dt + \sigma(U)dW, \quad U(0) = U^0.
  \label{eq:PEsAbstractFormulation}
\end{align}
This system is to be interpreted in the It\={o} sense which we
recall immediately below in Subsection~\ref{sec:SomeStochasticAnal}.
{}

Note that $U^{0}$ and $\ell$   in \eqref{eq:PE3DBasic}
are considered to be random in general.
Indeed, since we are studying \emph{Martingale Solutions} of
\eqref{eq:PEsAbstractFormulation} where the underlying stochastic
elements in the problem are considered as unknowns,
we will specify $U^0$ and $\ell$   only as probability distributions
on $H$ and $L^2(0, T;V')$.  See Definition~\ref{def:MGSol}
and the Remark~\ref{rmk:LetsBoastABit} following.
Note also that, for brevity of notation, we will sometimes write
\begin{align}
\mathcal{N}(t,U) := - (A U + B(U) + E U - \SDT(t,U)),
\label{eq:TheDriftPartTheWholePotatoe}
\end{align}
in the course of the exposition below.  When the context is clear we will sometimes drop
the dependence in $t$ and simply write $\mathcal{N}(U)$.

\subsection{Some Elements of Stochastic Analysis and Abstract Probability Theory}
\label{sec:SomeStochasticAnal}
Of course, \eqref{eq:PEsAbstractFormulation} is
understood  relative to a \emph{stochastic basis} $\mathcal{S}
:= (\Omega, \mathcal{F}, \{\mathcal{F}_t\}_{t \geq 0}, \mathbb{P},
\{W^k\}_{k \geq 1})$,
that is a filtered probability space with $\{W^k\}_{k
  \geq 1}$ a sequence of independent standard $1$-d Brownian motions
relative to $\mathcal{F}_t$.
 Here we may define $W$ on $\mathfrak{U}$
by considering an associated orthonormal basis $\{e_k\}_{k \geq 1}$ of $\mathfrak{U}$ and
taking  $W = \sum_k W_k e_k$; $W$ is thus a `cylindrical Brownian'
motion evolving over $\mathfrak{U}$.

Actually, this sum $W = \sum_k W_k e_k$ is
only formal; it does not generally converge in $\mathfrak{U}$.  For this reason
we will occasionally make use of a larger space $\mathfrak{U}_0 \supset \mathfrak{U}$
which we define according to
\begin{align}
  \mathfrak{U}_0
  := \left\{ v = \sum_{k \geq 0} \alpha_{k} e_{k} :
  | v |_{\mathfrak{U}_{0}}^{2}
  < \infty \right\},
  \quad \textrm{ where }   | v |_{\mathfrak{U}}^{2} := \sum_{k}  {\alpha^{2}_{k}}       \mbox{ and }
  | v |_{\mathfrak{U}_{0}}^{2} := \sum_{k} \frac{\alpha^{2}_{k}}{k^{2}}.
  \label{eq:AuxSpace}
\end{align}
Note that the embedding of $\mathfrak{U} \subset \mathfrak{U}_0$
is Hilbert-Schmidt. Moreover, using standard martingale arguments
with the fact that each $W_{k}$ is almost surely continuous
 we have that, for almost every $\omega \in \Omega$,
  $W(\omega) \in \mathcal C([0,T], \mathfrak{U}_0)$.

Since, \eqref{eq:PEsAbstractFormulation} is
actually short hand for a stochastic integral equation
we next briefly recall some elements of the theory of It\=o stochastic integration
in infinite dimensional spaces.
We choose an arbitrary   Hilbert space $X$ and, as above, we use  $L_2(\mathfrak{U}, X)$
to denote the collection of Hilbert-Schmidt operators from $\mathfrak{U}$ into $X$.
Given an $X$-valued predictable\footnote{For a given  stochastic basis
$\mathcal{S}$, let $\Phi = \Omega \times [0,\infty)$ and take
$\mathcal{G}$ to be the sigma algebra generated by the sets of the
form
\begin{displaymath}
    (s,t] \times F, \textrm{ with }0 \leq s< t< \infty \textrm{ and } F \in \mathcal{F}_s;
    \quad \quad
    \{0\} \times F; \quad F \in \mathcal{F}_0.
\end{displaymath}
Recall that an $X$ valued process $U$ is called predictable (with
respect to the stochastic basis $\mathcal{S}$) if it is measurable
from $(\Phi,\mathcal{G})$ into $(X, \mathcal{B}(X))$ where
$\mathcal{B}(X)$ denotes the family of Borelian subsets of $X$.}
process
$G \in L^{2}(\Omega; L^{2}_{loc}
(0, \infty,L_{2}(\mathfrak{U}, X)))$
the (It\={o}) stochastic integral
\begin{displaymath}
   M_{t} := \int_{0}^{t} G dW = \sum_k \int_0^t G_k dW_k,
   \quad \textrm{ where } G_{k} = G e_{k},
\end{displaymath}
is defined as an element in $\mathcal{M}^2_X$, the space of all
$X$-valued square integrable martingales (see \cite[Section
2.2, 2.3]{PrevotRockner}).    For further details on the general theory of infinite-dimensional
stochastic integration and stochastic evolution equations
we refer the reader to e.g. \cite{ZabczykDaPrato1,PrevotRockner}.

  Since we will be working in the setting of \emph{Martingale solutions},
  where the data in the problem \eqref{eq:PEsAbstractFormulation}
  is specified only as a probability distribution (over an appropriate function
  space), it is convenient to introduce some further notations around
  Borel probability measures. Let $(\mathcal{H},\rho)$ be a complete metric space
  and denote the family of Borel probability measures on
  $\mathcal{H}$ by $Pr(\mathcal{H})$.  Given a Borel measurable function $f: \mathcal{H} \mapsto \mathbb{R}$
  and an element $\mu \in Pr(\mathcal{H})$
  we will sometime write $\mu(f)$ for $\int_{\mathcal{H}} f(x) d\mu(x)$
  when the associated integral makes sense.  In particular we will write
  \begin{align}
	\mu(|f|) < \infty  \iff  \int_{\mathcal{H}} |f(x)| d \mu(x) < \infty.
\end{align}
We will review some basic properties related to convergence and compactness
of subsets of $Pr(\mathcal{H})$ in the Appendix, Section~\ref{sec:LetsConvergeToaMeasure},
below.  We refer the reader to e.g. \cite{Billingsley1} for an extended treatment of the general theory
of probability measures on Polish spaces which include Hilbert spaces such as $H$ and $V$.

\subsection{Definition of Martingale Solutions and Statement of the Main Result}
\label{sec:DefSoln}

We turn now to give a rigorous meaning for the so-called \emph{weak-martingale solutions} of \eqref{eq:PEsAbstractFormulation}
which are defined as follows:
\begin{Def}\label{def:MGSol}[Weak-Martingale Solutions]
Fix $\mu_{U^{0}}$, $\mu_{\ell}$ Borel measures respectively on $H$
and $L^{2}_{loc}$ $(0,\infty;V')$ with
\begin{align}
\mu_{U^{0}}( | \cdot |_{H}^{2} ) < \infty \quad \textrm{ and } \quad \mu_{\ell}( \| \cdot \|_{L^{2}(0, T;V')}^{2}) < \infty,
\textrm{ for any } T > 0.
\label{eq:DataMomentAssumptions}
\end{align}
A \emph{\textbf{weak-martingale solution}}
$({\tilde{\mathcal{S}}}, {\tilde{ {U}}},{\tilde{\mathcal{\ell}}})$ of  \eqref{eq:PEsAbstractFormulation}
consists of a stochastic basis ${\tilde{\mathcal{S}} = (\tilde{\Omega}, \tilde {\mathcal{{F}}}, \{\tilde{\mathcal{{F}}}_t\}_{t \geq
    0},  \tilde{\mathbb{P}}, \tilde{W})}$ and processes ${\tilde U}$ and  ${\tilde{\ell}}$ (defined relative to ${\tilde{\mathcal{S}}}$) adapted to $\{{\tilde{\mathcal{F}}}_{t}\}_{t \geq
    0}$. This triple $({\tilde{\mathcal{S}}}, {\tilde{ {U}}},{\tilde{\mathcal{\ell}}})$ will enjoy the following properties
    \begin{itemize}
    \item[(i)] for every $T > 0$
    \begin{gather}
 {\tilde{ U}} \in L^2({\tilde{\Omega}}; L^\infty(0, T;H)\cap L^{2}(0, T;V)), \quad
   {\tilde{ U}}\textrm{ is a.s. weakly continuous in } H,\\
 {\tilde{\ell}} \in L^{2}( {\tilde{\Omega}}; L^{2}(0, T; V').
    \notag
    \label{eq:solRegularity}
    \end{gather}
    \item[(ii)] For every $t > 0$ and each test function ${{\mathfrakV}} \in V_{(2)}$,
    \begin{align}
    ({\tilde{U}}(t), {{\mathfrakV}}) + \int_{0}^{t} (a({\tilde{U}},{{\mathfrakV}}) &+ b({\tilde{U}},{\tilde{U}}, {{\mathfrakV}}) + e(
    {\tilde{U}}, {{\mathfrakV}})
                                               )ds
                                               \notag\\
        =& ({\tilde{U}}(0), {{\mathfrakV}}) + \int_{0}^{t} (\ell({{\mathfrakV}})+ \SDvar({\tilde{U}}, {
        {\mathfrakV}})) dt +
        \int_{0}^{t} g({\tilde{U}}, {{\mathfrakV}}) d{\tilde{W}},
       \label{eq:solEqWeakForm}
    \end{align}
    almost surely.
   \item[(iii)] Finally,
   ${\tilde{U}}(0)$ and ${\tilde{\ell}}$ have the same laws as $\mu_{U^{0}}$, $\mu_{\ell}$, i.e.
    \begin{align}
       \tilde{\Prb}({\tilde{U}}(0) \in \cdot) = \mu_{ U ^{0}}(\cdot)
       \textrm{ and }  \tilde{\Prb}({\tilde{\ell}} \in \cdot) = \mu_{ {\ell} }(\cdot).
       \label{eq:InitialDataDist}
    \end{align}
\end{itemize}
\end{Def}
With this definition in hand we  now state one of  the main results of the work as follows.
\begin{Thm}\label{thm:MainExistenceMGSol}
 Let $\mu_{U^{0}}$, $\mu_{\ell}$ be
a given pair of Borel measures on
respectively $H$ and $L^2_{loc}(0,\infty; V')$ which
satisfy the moment conditions
\eqref{eq:DataMomentAssumptions}.
Then, relative to this data, there
exists a \textbf{martingale solution} $({\tilde{\mathcal{S}}}, {\tilde{U}}, {\tilde{\ell}})$
of \eqref{eq:PEsAbstractFormulation} in the sense of Definition~\ref{def:MGSol}.
\end{Thm}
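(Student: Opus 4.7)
The plan is to implement the strategy sketched in the introduction: construct a martingale solution as a limit of interpolants of a semi-implicit Euler scheme. Fix $T > 0$ and $N \in \mathbb{N}$, set $\Delta t = T/N$, $t_n = n\Delta t$, $\Delta W^{n+1} = W(t_{n+1}) - W(t_n)$, and discretize the forcing by $\ell^{n+1} = \frac{1}{\Delta t}\int_{t_n}^{t_{n+1}} \ell\,ds$. Inductively on $n$, I would solve the implicit step
\begin{equation*}
U^{n+1} - U^n + \Delta t\bigl(AU^{n+1} + B(U^{n+1}) + EU^{n+1}\bigr) = \Delta t\bigl(\ell^{n+1} + \xi(t_n, U^n)\bigr) + \sigma(t_n, U^n)\Delta W^{n+1},
\end{equation*}
in $V'$, after a Galerkin truncation. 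A Brouwer-type fixed-point argument, based on coercivity \eqref{eq:AvarDef} together with the antisymmetries \eqref{eq:eCancelation} and \eqref{eq:BCan}, produces a solution at each Galerkin level; the Galerkin limit is then taken with the same estimates used below. The critical issue is adaptedness: since uniqueness of the implicit step is not known, a per-$\omega$ selection can fail to be $\mathcal{F}_{t_{n+1}}$-measurable. Following the plan announced before Proposition~\ref{thm:ExistenceSelectionThm}, I would apply the Bensoussan--Temam / Kuratowski--Ryll-Nardzewski measurable selection theorem to the multifunction assigning to each $\omega$ the solution set of the implicit step, with the underlying filtration chosen so that the selection is adapted.

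Next I would derive uniform-in-$N$ estimates. Pairing the scheme with $U^{n+1}$, using \eqref{eq:AvarDef}--\eqref{eq:BCan}, the identity $(a-b, a) = \tfrac12(|a|^2 - |b|^2 + |a-b|^2)$, and the sublinear bounds \eqref{eq:SublinearCondH} and \eqref{eq:SublinearConditionsOnSDTerm} on $\sigma$ and $\xi$ yields a discrete energy inequality. Summing over $n$, taking expectations, and handling the stochastic part by a discrete Burkholder--Davis--Gundy inequality gives
\begin{equation*}
\mathbb{E}\sup_{0 \leq n \leq N} |U^n|^2 + \mathbb{E}\sum_{n=0}^{N-1} \Delta t\, \|U^{n+1}\|^2 \leq C\bigl(1 + \mathbb{E}|U^0|^2 + \mathbb{E}\|\ell\|_{L^2(0,T;V')}^2\bigr),
\end{equation*}
uniformly in $N$. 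Because only second moments are assumed on the data (cf.~\eqref{eq:DataMomentAssumptions}), I would combine these bounds with a suitable stopping-time truncation in order to avoid the higher-moment shortcut commonly invoked in the continuous-time theory.

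For the passage to the limit I would introduce three continuous-time interpolants of $\{U^n\}$: the piecewise-affine $U^N(t)$; the piecewise-constant $\bar U^N(t) = U^{n+1}$ on $(t_n, t_{n+1}]$; and the lagged $\underline U^N(t) = U^n$ on $[t_n, t_{n+1})$, which is adapted and hence an admissible integrand for the It\=o integral. These satisfy an identity
\begin{equation*}
U^N(t) = U^N(0) - \int_0^t \bigl(A\bar U^N + B(\bar U^N) + E\bar U^N\bigr)\,ds + \int_0^t \bigl(\ell^N + \xi(\underline U^N)\bigr)\,ds + \int_0^t \sigma(\underline U^N)\,dW + R_N(t),
\end{equation*}
with a discretization remainder $R_N$ tending to zero, thanks to the uniform estimates, in a suitable sense. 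I would then establish joint tightness of the laws of $(U^N, \bar U^N, \underline U^N, \ell^N, W)$ on a product Polish space built from $L^2(0,T;H) \cap C([0,T]; V_{(3)}')$ for the velocity interpolants, $L^2_w(0,T;V')$ for the forcing, and $C([0,T]; \mathfrak{U}_0)$ for the noise. An Aubin--Lions--Simon argument applies: the uniform $L^2(0,T;V)$ bound combined with fractional-in-time regularity extracted from the equation (plus Kolmogorov-type estimates on the stochastic integral tested against $V_{(2)}$) supplies compactness for the velocity components, and the remaining pieces are tight directly. Prokhorov plus a Skorohod (or Jakubowski--Skorohod) representation then produce almost surely convergent copies on a new basis $\tilde{\mathcal{S}}$.

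Finally I would identify the limit $\tilde U$ as a weak-martingale solution. The linear terms $A$, $E$ pass by weak $L^2(0,T;V)$ convergence; the nonlinear term converges by \eqref{eq:WeakLimCondTimeInt} together with the strong $L^2(0,T;H)$ convergence of $\bar U^N$; the stochastic integral is identified via the standard argument for recovering It\=o integrals under weak-type convergence, after checking that $\underline U^N$ and $\bar U^N$ share the same limit in $L^2(0,T;H)$. Weak continuity of $\tilde U$ in $H$ follows from \eqref{eq:BSizeV2Prime}, and the prescribed distributions \eqref{eq:InitialDataDist} are preserved by construction. The main obstacle, as flagged in the introduction, is the combined effect of the nonuniqueness of the implicit step (which forces the use of measurable selection rather than a constructive iteration) and the necessity of the lagged interpolant $\underline U^N$ (which is what restores adaptedness of the It\=o integrand), so that the approximating identity involves three interrelated processes plus a remainder $R_N$ and compactness must be handled \emph{jointly} rather than read off a single evolution equation.
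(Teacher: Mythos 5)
Your high-level strategy lines up well with the paper's: a semi-implicit Euler step, measurable selection (Bensoussan--Temam / Kuratowski--Ryll-Nardzewski) for adapted discrete iterates, a discrete BDG inequality with a stopping-time truncation to stay within second moments, tightness on a product Polish space, Skorokhod representation, and passage to the limit using \eqref{eq:WeakLimCondTimeInt}. However, there are two concrete gaps between your sketch and an argument that closes.

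First, your interpolants $U^N$ (piecewise affine with endpoint $U^{n+1}$) and $\bar U^N(t)=U^{n+1}$ on $(t_n,t_{n+1}]$ are \emph{not} adapted to the original filtration, because their value at $s\in(t_n,t_{n+1})$ already encodes the full increment $W(t_{n+1})-W(t_n)$. This is not merely an aesthetic concern: the Skorokhod argument you invoke requires one to prove, on the new probability space, that $\tilde W^N$ is a cylindrical Wiener process relative to a filtration generated by \emph{all} components of the transported vector. If any transported component fails to be adapted on the original space, the independence of future Brownian increments from that filtration fails at the approximating level and one loses the martingale characterization of $\int\sigma\,d\tilde W^N$. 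The paper circumvents this by lagging \emph{both} continuous-time processes (its $U_N$ and $\bar U_N$ are the values and affine interpolant shifted one step to the left, and hence adapted), at the price of introducing explicit remainder terms $\mathcal{E}^D_N$, $\mathcal{E}^S_N$; it then subtracts these remainders to form auxiliary processes $U^*_N$, $U^{**}_N$ which carry the compactness (tightness) estimates. Your ``joint compactness of three interrelated processes'' names the difficulty but does not resolve the adaptedness defect that motivates the paper's lagging and its $U^*_N$, $U^{**}_N$ construction.

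Second, you discretize with $\sigma(t_n,U^n)\Delta W^{n+1}$ directly. Once the lagging is introduced (as it must be, by the first point), the stochastic remainder involves expressions of the type $\sigma(U^{n-1})\,\eta^n\,(t^n-t)/\Delta t$, and in order for the compactness argument to see through the subtraction of $\mathcal{E}^S_N$ one needs this remainder controlled not only in $L^2(0,T;H)$ but also uniformly in $L^2(0,T;V)$. With $\sigma$ only mapping into $L_2(\mathfrak U,H)$ as in \eqref{eq:SublinearCondH} this is unavailable, which is why the paper replaces $\sigma$ by an approximating family $\sigma_N:[0,\infty)\times H\to L_2(\mathfrak U,V)$ satisfying \eqref{eq:SigmaApproxCond1}--\eqref{eq:SigmaApproxCond3} (and for the same reason replaces $U^0$ by $U^0_N\in V_{(2)}$ obeying \eqref{eq:InitialDataBnd}). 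Your sketch omits both regularizations; without them the remainder $R_N$ does not vanish in the norms you would need for the Aubin--Lions--Simon compactness to apply to the corrected process.
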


\begin{Rmk}\label{rmk:LetsBoastABit}
Depending on the structure of $\sigma$ the application of noise
leads to a variety of different  effects on the behavior of the solutions.
In particular $\sigma$ can be
chosen so that the noise either provides a damping or an exciting effect.
It is therefore unsurprising that the structure of the stochastic
terms in e.g. \eqref{eq:PE3DBasic} remains a subject of ongoing
debate among physicists and
applied modelers. In any case,
viewed as a proxy for physical and numerical uncertainty,
the structure of the noise would be expected to vary by application.
With this debate in mind we have therefore sought to treat a very general
class of state-dependent
noise structures in $\sigma$ requiring only the sublinear condition
\eqref{eq:SublinearCondH}.  We have illustrated some interesting examples
covered under this condition in Section~\ref{sec:StochasticForcings}
below.

Actually, the Stratonovich interpretation of white noise driven forcing may often be more
appropriate for applications in geophysics.  See e.g. \cite{HorsthemkeLefever1}, \cite{Penland2003}
for extended discussions on this connection. Note that although the equations \eqref{eq:PEsAbstractFormulation} are
considered in  an It\=o sense, an additional, state dependent drift term
$\SDT$ has been added to the equations which allows us to treat
a class of Stratonovich noises with \eqref{eq:PEsAbstractFormulation} via the standard `conversion
formula' between It\=o and Stratonovich evolutions.  See e.g.
\cite{Arnaud1974} and also Section~\ref{sec:StochasticForcings}
where we present one such example of Stratonovich forcing in detail.
\end{Rmk}

\section{A Discrete Time Approximation Scheme}
\label{sec:DiscreteScheme}

We now describe in detail the semi-implicit Euler scheme, \eqref{eq:EulerSemiFnRep},
which we use to approximate \eqref{eq:PEsAbstractFormulation}. 
This system is given rigorous meaning in Definition~\ref{def:AdminEulerSchemes}.  We then recall a specific 
stochastic basis in Section~\ref{sec:specificfiltration}  and establish the existence of solutions of \eqref{eq:EulerSemiFnRep} in Proposition~\ref{thm:ExistenceSelectionThm}
relative to this basis.
We conclude this section by providing certain uniform bounds (energy estimates) independent of the time step of the discretization in Proposition~\ref{thm:UniformBnds}.

\subsection{The Implicit Scheme}

Fix a stochastic basis $\mathcal{S}= (\Omega, \mathcal{F}, \{\mathcal{F}_t\}_{t \geq 0}, \mathbb{P},
\{W^k\}_{k \geq 1})$ and elements $\ell  \in L^2 (\Omega; L^2 _{loc} (0, \infty; V^\prime))$, $U^{0}\in L^2 (\Omega; H)$ whose distributions
correspond to the externally given $\mu_{\ell}$, $\mu_{U^{0}}$.
For a given $T > 0$ and any integer $N$, let
\begin{align}
\deltaN = T/N, \quad t^{n} =  t^n_N = n \deltaN, \quad \textrm{ for } n = 0, 1, \ldots, N,
\label{eq:DetTimeIncDef}
\end{align}
along with the associated stochastic increments
\begin{align}
\eta^{n} = \eta_N^n = W(t_n) - W(t_{n-1}),  \quad \textrm{ for } n = 1, \ldots, N.
\label{eq:StochIncDef}
\end{align}
Using an implicit Euler time discretization scheme
we would then like to approximate \eqref{eq:PEsAbstractFormulation}
by considering sequences $\{U^n_N\}_{n =1}^N$ satisfying
\begin{equation}\label{eq:EulerSemiFnRep}
   \frac{U^{n}_N - U^{n-1}_N}{\deltaN}
   + AU^n_N
   + B(U^n_N)
   + EU^n_N
   = \ell^n_N    + \SDT(t^{n}, U^{n}_{N})
       + \sigma_{N}(t^{n-1}, U^{n-1}_{N}) \frac{\eta_N^n}{\deltaN},
\end{equation}
in $V_{(2)}'$ for $n = 1, \ldots N$.
For how to choose $U_N^0$, see Remark \ref{rmk:SomeCrazyApproximation}. The terms $\ell^n_N$ are given by
\begin{align}
  		\ell^n_N({{\mathfrakV}})
  		= \frac{1}{\deltaN} \int_{(n-1)\deltaN}^{n\deltaN}
    		 \ell(t, {{\mathfrakV}}) dt \quad \textrm{ for } n =1, 2, \ldots, N,
   			  \label{eq:EllIncrement}
\end{align}
and the operator $\sigma_{N}: [0,\infty) \times H \rightarrow L_{2}(\mathfrak{U},V)$
is any approximation of $\sigma$ which satisfies
\begin{align}
	\|\sigma_{N}(t,U)\|_{L_{2}(\mathfrak{U}, V)}^{2} &\leq N |\sigma(t,U)|_{L_{2}(\mathfrak{U}, H)}^{2},
	\label{eq:SigmaApproxCond1}\\
	|\sigma_{N}(t,U)|_{L_{2}(\mathfrak{U}, H)}^{2} &\leq |\sigma(t,U)|_{L_{2}(\mathfrak{U}, H)}^{2},
	\label{eq:SigmaApproxCond2}
\end{align}
for every $t \geq 0$ and every $U \in H$.
 Additionally we suppose that, for any $t \geq 0$,
\begin{align}
	\lim_{N \rightarrow \infty} \sigma_{N}(t,U_N) &= \sigma(t,U),
	\quad \textrm{ whenever } U_N \rightarrow U \textrm{ in } H.
	\label{eq:SigmaApproxCond3}
\end{align}	
For the existence of such $\sigma_N$, see Remark \ref{rmk:SomeCrazyApproximation}.
We write $g_{N}(t,U, {{\mathfrakV}}) = (\sigma_{N}(t,U), {{\mathfrakV}})$.\footnote{The
choice of a ``time explicit" term in
$\sigma_{N}(t^{n-1},U^{n-1}_N)$ is needed to obtain the correct (It\=o) stochastic
integral in the limit as $\deltaN \rightarrow 0$.  Actually, this adaptivity (measurability) concern also
leads us to introduce the approximations of $\sigma$ in \eqref{eq:EulerSemiFnRep};
see Remark~\ref{rmk:SomeCrazyApproximation}
and \eqref{eq:InEQPlusError}, \eqref{eq:StocErrorL2V} below.
Note that, as explained in this Remark approximations of $\sigma$
satisfying \eqref{eq:SigmaApproxCond1}--\eqref{eq:SigmaApproxCond3} can always
be found via an elementary functional-analytic construction.}

We make the notion of suitable solutions of \eqref{eq:EulerSemiFnRep} precise in the following definition.
\begin{Def}\label{def:AdminEulerSchemes}
    We consider a stochastic basis  $\mathcal{S} = (\Omega, \mathcal{F}, \{\mathcal{F}_t\}_{t \geq
    0}, \mathbb{P}, \{W^k\}_{k \geq 1})$. Given  $N \geq 1$ and an element $U_{N}^{0} \in L^2(\Omega,H)$  which is
    $({{\mathcal{{F}}}}_{0}, \mathcal{B}(H))$ measurable
    and a   process $\ell =\ell (t) \in L^{2}(\Omega; L^{2}(0,T; V'))$ adapted to $\{ \mathcal{F}_t\}_{t \geq
    0}
    $,
    we say that a sequence $\{U^{n}_{N}\}_{n = 0}^{N}$
    is an \emph{admissible solution of the Euler Scheme} \eqref{eq:EulerSemiFnRep}, if
    \begin{itemize}
     \item[(i)]  For each $n = 1, \ldots, N$,  $U^{n}_{N} \in L^{2}(\Omega; V)$
     and $U_N^{n}$ is
     ${{\mathcal{{F}}}}_{n}$ adapted,
     where ${{\mathcal{{F}}}}_{n} := {{\mathcal{{F}}}}_{t^n}$,
     $n = 0, \ldots, N$.
     \item[(ii)] Every pair $U^n_{N},U^{n-1}_N$, $n = 1, \ldots, N$,
     satisfies
     \begin{align}
	   (U^{n}_N - U^{n-1}_N, {{\mathfrakV}})
	   +& \left(a(U^n_N, {{\mathfrakV}})
	   + b(U^n_N, U^n_N, {{\mathfrakV}})
	   + e(U^n_N, {{\mathfrakV}})\right) \deltaN
	   \notag\\
	   &\quad \quad \quad = \left(\ell^n_{N}({{\mathfrakV}})  + \SDvar(t^{n},U^{n}_{N}, {{\mathfrakV}})\right)\deltaN
	       + g_{N}(t^{n-1},U^{n-1}_N, {{\mathfrakV}}) \eta_N^n,
	       \label{eq:EulerSemi}
	\end{align}
	almost surely for all ${{\mathfrakV}} \in V_{(2)}$.
	\item[(iii)]  For each $n = 1, \ldots, N$, $U_n^{N}$ and $U_{n-1}^N$
     satisfy  the `energy inequality', almost surely on $\Omega$:
\begin{align}
 (U^n_N -U^{n-1}_N, U^n_N)
 +  \deltaN c_{1} \| U^n_N \|^{2}
 \leq  \Bigl( \ell^n_N(U^n_N) + \SDvar(t^{n},U^{n}_{N}, U_{N}^{n}) \Bigr) \deltaN+
        g_{N}(U^{n-1}_N, U^n_N) \eta_N^n,
        \label{eq:DiscreteInequality1}
\end{align}
 for $n = 1, 2, \ldots, N$ and where $c_1$ is the constant from \eqref{eq:AvarDef}.
    \end{itemize}
\end{Def}

\begin{Rmk}\label{rmk:SomeCrazyApproximation}
At first glance the dependence on $N$ in both the initial condition and in the
noise term involving $\sigma$ may seem strange.
Indeed, in the deterministic setting, when we approximate
\eqref{eq:PEsAbstractFormulation} with \eqref{eq:EulerSemiFnRep}, we would simply
take $U_N^0$ to be  equal to the
initially given $U^0$ for all $N$.  Similarly if we were to add
deterministic sublinear terms analogous to $\sigma$ to the
governing equations no approximation as in \eqref{eq:SigmaApproxCond1}--\eqref{eq:SigmaApproxCond3}
would be necessary; however, the situation is, in general, more complicated in the stochastic setting as we shall see
in detail later on in Section~\ref{sec:ContTimeProcesses}, Proposition~\ref{prop:BasicPropertiesOfConProcesses}.
This is essentially because we must construct continuous time processes from the $U^n_N$'s
which are adapted to a given filtration.   See \eqref{eq:InEQPlusError}, \eqref{eq:ErrorTerm1}--\eqref{eq:ErrorTerm2}
\eqref{eq:DetErrorDecalLinfH} and \eqref{eq:StocErrorL2V} for specific details.

For now let us describe how we can achieve suitable
approximations in the $U^0_N$ and $\sigma_N$'s.
\begin{itemize}
\item
For a given initial probability distributions $\mu_{U^{0}}$,
on $H$  (with $\mu_{U^{0}}(|\cdot|^{2}_{H}) <\infty$)
and having fixed a suitable stochastic basis and an
element $U^{0} \in L^{2}(\Omega; H)$, ${{\mathcal{{F}}}}_{0}$-measurable,
with distribution
$\mu_{U^{0}}$.
We then pick a sequence $U^{0}_{N} \in L^{2}(\Omega;V_{(2)})$
such that
$U_{N}^{0} \rightarrow U^{0}$ \textrm{  as  }  $N \rightarrow \infty $ \textrm{ in } $L^{2}(\Omega;H)$
but subject to the restriction given in \eqref{eq:InitialDataBnd} below.  Such a sequence
can be found with a simple density argument.  Indeed, since $V_{(2)}$ is dense in $H$, we may initially
approximate $U^{0}$ in $L^{2}(\Omega,H)$ with a sequence $\bar{U}_{M}^{0} \in L^{\infty}(\Omega; V_{(2)})$.
We then define $M(N) = \max\{ M \geq 1: \| \bar{U}^{0}_{M}\|_{L^{\infty}(\Omega; V_{(2)})} \leq N^{1/2}\} \wedge N$
and define $U_{N}^{0} = \bar{U}_{M(N)}^{0}$.  Since $M(N) \rightarrow \infty$ as $N \rightarrow \infty$,
$U_{N}^{0}$ approximates $U^{0}$ in $L^{2}(\Omega;H)$ while maintaining the constraint \eqref{eq:InitialDataBnd}.
\item We may construct elements $\sigma_N$ from $\sigma$ satisfying \eqref{eq:SigmaApproxCond1}--\eqref{eq:SigmaApproxCond3}
according to the following general functional analytic construction.  For any $U \in H$, via Lax-Milgram we
define $\Psi (U)$ to be the unique solution in $V$ of $((\Psi (U), U^\sharp)) = (U, U^\sharp)$ for all $U^\sharp \in V$.
Classically  $\Psi$ is a compact, self-adjoint and injective linear operator on $H$.  Thus, by the Spectral Theorem, we may find
a complete orthonormal basis for $H$ $\{\Phi_j\}_{j \geq 1}$ which is made up of eigenfunctions of $\Psi$ with a corresponding sequence of
eigenvalues $\{\gamma_j\}_{j \geq 1}$ decreasing to zero.  For any integer $m$ we let $P_m$ to be the
projection onto $H_m := \mbox{span} \{ \Phi_1, \ldots, \Phi_m \}$.  Now choose a sequence
$m_N$ increasing to infinity  but so that $\gamma_{m_N}^{-1} \leq N$.  It is not hard to see that defined in this way
$\sigma_N(\cdot) =  P_{m_N} \sigma(\cdot)$
satisfies the requirements given in \eqref{eq:SigmaApproxCond1}--\eqref{eq:SigmaApproxCond3}.
\end{itemize}
\end{Rmk}

\subsection{Existence of the $U^n_N$'s}
\label{sec:ExistenceForImpEuler}

While the existence for a.e. $\omega \in \Omega$  of solutions to  \eqref{eq:EulerSemiFnRep} satisfying \eqref{eq:DiscreteInequality1}
follows along arguments similar to those found in \cite[Lemma 2.3]{PetcuTemamZiane},
some care is required to demonstrate the existence of sequences  $\{U_N^n\}_{n =0}^N$
which are \emph{adapted} to the underlying stochastic basis.
For this complication we will make use of a `measurable selection theorem' (Theorem~\ref{thm:MeasureSel} below in the Appendix Section~\ref{sec:LetsJustBeinMeasurableOK?})
from \cite{BensoussanTemam} (and see also the related
earlier works \cite{KuratowskiRyllNardzewski1965},
\cite{Castaing1967}).  In order to apply this result we use of a specific stochastic basis defined around the canonical Wiener space whose definition we recall next.

\subsubsection{{The Wiener measure and its filtration}}
\label{sec:specificfiltration}

We recall the canonical Wiener space as follows; see \cite{KaratzasShreve} for further details.  Let 
  \begin{equation*}
  \Omega  = \mathcal C ([0, T]; \mathfrak U_0),
  \end{equation*}
 equipped with the Borel $\sigma$-algebra denoted as $\mathcal{G}$.  We equip
 $(\Omega, \mathcal{G})$ with the Wiener measure $\mathbb P $.\footnote{Using the orthonormal   basis $\{e_k\}_{k \geq 1}$ of $\mathfrak{U}$,  
  $\mathbb{P}$ is obtained as the product of the independent Wiener measures each one defined on $\mathcal C ([0, T]; \mathbb{R})$.}
Then the evaluation map $W (\omega, t):= \omega (t)$, $\omega \in  \Omega  $, $t \in [0, T]$, is a cylindrical Wiener process on $\mathfrak U_0$.
The filtration is given by $\mathcal{G}_t$ defined as 
 \begin{align*}
 \mbox{the completion of the sigma algebra generated by the }  W (s)  \mbox{ for  }  s \in [0, t]   \mbox{ with respect to }\mathbb P .
  \end{align*}
 Combining these elements $\mathcal{S}_{\mathcal{G}} = (  \Omega , \mathcal{G},  \{\mathcal{G}_t \}_{t \geq 0},\Prb, W)$ gives a stochastic
 basis suitable for applying Theorem~\ref{thm:MeasureSel}.

\subsubsection{Existence of the $U^n_N$'s adapted to ${\mathcal{G}}_{t_n} $ }
\label{sec:ExistenceForImpEuler}

\begin{Prop}\label{thm:ExistenceSelectionThm}
Suppose that
\begin{align}
N \geq  N_{0} := 4 T c_{4},  \quad (\textrm{or equivalently that } 4 \deltaN  c_{4} < 1),
\label{eq:ExistenceStartingDiscretization}
\end{align}
where $c_{4}$ is the constant
arising in \eqref{eq:SublinearConditionsOnSDTerm}.
Consider the stochastic basis $\mathcal{S}_{\mathcal{G}}$  defined as in Section~\ref{sec:specificfiltration}, an $N \geq N_{0}$, and an element $U_N^0 \in L^2(\Omega;H)$
which is ${{\mathcal{G}}}_0$-measurable and a  process $\ell =\ell (t) \in L^{2}(\Omega; L^2(0,T;V')$ measurable with respect to the sigma algebra generated by the  $    {W} (s)$ for $ s \in [0, t]   $. Then there  exists a
sequence $\{U_N^n\}_{n =0}^N$ which is an admissible solution of the Euler scheme \eqref{eq:EulerSemiFnRep}
in the sense of Definition~\ref{def:AdminEulerSchemes}
\end{Prop}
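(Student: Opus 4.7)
The plan is to construct $\{U^n_N\}_{n=0}^N$ inductively on $n$, combining at each step a deterministic Galerkin/Brouwer argument for existence of a (possibly non-unique) solution with the measurable selection theorem to guarantee adaptedness to $\{\mathcal{G}_{t_n}\}$. The base case $n=0$ is given. Assume inductively that $U^{n-1}_N \in L^2(\Omega;H)$ is $\mathcal{G}_{t_{n-1}}$-measurable, and define
\begin{equation*}
f^n := U^{n-1}_N + \deltaN\,\ell^n_N + \sigma_N(t^{n-1},U^{n-1}_N)\,\eta^n_N.
\end{equation*}
Then $f^n$ is $\mathcal{G}_{t_n}$-measurable, because $U^{n-1}_N$ and $\sigma_N(t^{n-1},U^{n-1}_N)$ are $\mathcal{G}_{t_{n-1}}$-measurable, $\eta^n_N$ is $\mathcal{G}_{t_n}$-measurable and independent of $\mathcal{G}_{t_{n-1}}$, and $\ell^n_N$ is built from $\ell(s)$ for $s\leq t_n$. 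The square-integrability hypotheses together with \eqref{eq:SigmaApproxCond1} yield $f^n \in L^2(\Omega;V')$. The Euler step \eqref{eq:EulerSemi} then reduces, $\omega$ by $\omega$, to the parametrized deterministic problem: given $f\in V'$, find $U\in V$ such that
\begin{equation*}
(U,{\mathfrakV}) + \deltaN\bigl(a(U,{\mathfrakV}) + b(U,U,{\mathfrakV}) + e(U,{\mathfrakV})\bigr) - \deltaN\,\SDvar(t^n,U,{\mathfrakV}) = \langle f,{\mathfrakV}\rangle \quad \forall {\mathfrakV}\in V_{(2)}.
\end{equation*}

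For the existence at fixed $\omega$ I would employ a finite-dimensional Galerkin scheme on the spaces $H_m$ spanned by the eigenfunctions $\{\Phi_j\}$ of Remark~\ref{rmk:SomeCrazyApproximation}. Testing the finite-dimensional equation against $U_m$ itself and using coercivity \eqref{eq:AvarDef}, the antisymmetries \eqref{eq:eCancelation} and \eqref{eq:BCan}, and the sublinear bound \eqref{eq:SublinearConditionsOnSDTerm} gives
\begin{equation*}
(1 - \deltaN c_4)|U_m|^2 + \deltaN c_1 \|U_m\|^2 \leq \|f\|_{V'}\|U_m\| + \deltaN c_4 |U_m|;
\end{equation*}
the smallness condition $4\deltaN c_4 < 1$ from \eqref{eq:ExistenceStartingDiscretization} keeps the coefficient of $|U_m|^2$ positive, and Young's inequality on the right-hand side produces a uniform a priori bound on $|U_m|^2 + \deltaN \|U_m\|^2$ in terms of $\|f\|_{V'}$ and $\deltaN$. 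A standard Brouwer fixed-point argument (in the spirit of \cite[Lemma 2.3]{PetcuTemamZiane}) supplies $U_m$, and weak compactness in $V$ together with \eqref{eq:WeakLimCond} and the continuity of $a$, $e$, $\SDT$ allows passage to a subsequential limit, yielding a solution $U\in V$ of the parametrized problem. Taking ${\mathfrakV} = U$ in the resulting equation then gives the energy inequality \eqref{eq:DiscreteInequality1} once the stochastic data are reintroduced.

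The principal obstacle, and in fact the entire motivation for the proposition, is that the solution set $\mathcal{M}(f) \subset V$ need not be a singleton because $B$ is nonlinear, so a direct $\omega$-wise choice will not in general deliver a $\mathcal{G}_{t_n}$-measurable $U^n_N$. To circumvent this I would invoke the measurable selection theorem (Theorem~\ref{thm:MeasureSel}) of \cite{BensoussanTemam}. The hypotheses to verify are that $\mathcal{M}(f)$ is nonempty and closed in $V$ and that the graph $\{(f,U): U\in\mathcal{M}(f)\}$ is Borel-measurable in $V'\times V$; nonemptiness is precisely the Galerkin existence result sketched above, closedness follows from \eqref{eq:WeakLimCond} together with the continuity of $a$, $e$ and $\SDT$ applied to a weakly convergent sequence in $V$ at fixed $f$, and graph measurability follows by the same limiting argument applied to approximating sequences of $f$ in $V'$. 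The theorem then furnishes a Borel-measurable selection $\phi: V'\to V$ with $\phi(f)\in\mathcal{M}(f)$. Setting $U^n_N := \phi(f^n)$ produces a $\mathcal{G}_{t_n}$-measurable element of $V$ which solves \eqref{eq:EulerSemi}; square-integrability $U^n_N\in L^2(\Omega;V)$ follows from the $\omega$-wise a priori estimate integrated against $\|f^n\|_{V'}^2\in L^1(\Omega)$. Iterating from $n=1$ up to $n=N$ completes the construction of an admissible solution in the sense of Definition~\ref{def:AdminEulerSchemes}.
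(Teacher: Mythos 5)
Your overall strategy is the same as the paper's: iterate in $n$, use Galerkin plus Brouwer to show the solution set of the $\omega$-wise implicit step is nonempty and closed, and apply the measurable selection theorem of \cite{BensoussanTemam} to obtain an adapted selection. However, two steps as written would not go through.

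First, the selection theorem (Theorem~\ref{thm:MeasureSel}) does not produce a Borel-measurable selection; it produces only a \emph{universally Radon measurable} one. You cannot conclude directly from ``$f^n$ is $\mathcal{G}_{t_n}$-measurable and $\phi$ is a measurable selection'' that $\phi(f^n)$ is $\mathcal{G}_{t_n}$-measurable, because composing a Borel-measurable map with a merely universally measurable one need not be Borel measurable. The paper resolves this by (i) choosing the filtration $\{\mathcal{G}_t\}$ to be \emph{completed} with respect to the Wiener measure (Section~\ref{sec:specificfiltration}), (ii) writing $U^{n-1}_N$ and $\ell^n_N$ as Borel functions of $W|_{[0,t_{n-1}]}$, $W|_{[0,t_n]}$ (the latter via the Doob--Dynkin extension, Theorem~\ref{doob}), and (iii) invoking the composition results of Corollary~\ref{cor:cont} and Theorem~\ref{lem:composition} to show that $f^n_N := U^n_N$ as a function of $W|_{[0,t_n]}$ is $\mathbb{P}$-measurable, hence $\mathcal{G}_{t_n}$-measurable. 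This bookkeeping is the heart of the proof, and your argument skips it.

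Second, ``taking ${\mathfrakV}=U$ in the resulting equation'' to obtain the discrete energy inequality \eqref{eq:DiscreteInequality1} is not legitimate: the limiting equation holds tested against ${\mathfrakV}\in V_{(2)}$, but $U$ lies only in $V$ and is not a valid test function. The paper avoids this by building the energy inequality into the set $\Lambda(t,F)$ directly (\eqref{eq:LambdaDef}) and verifying it for the Galerkin limit by \emph{weak lower semicontinuity} of the $H$- and $V$-norms starting from \eqref{eq:UniformBndsGalerkinApprox}, not by testing the limit equation against $U$. Including the inequality inside $\Lambda$ is also what lets the selection theorem produce a selection that simultaneously satisfies \eqref{eq:EulerSemi} and \eqref{eq:DiscreteInequality1}.

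As a minor point, your final $L^2(\Omega;V)$ claim is in the right spirit, but the paper's route (taking expectations of \eqref{eq:AppOfElementaryId}, using $\E\, g_N(U^{n-1}_N,U^{n-1}_N)\eta^n_N=0$ and the It\^o isometry) is cleaner and does not require tracking a $\deltaN^{-1}$ factor from the raw $\omega$-wise estimate.
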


The rest of this subsection is devoted to the proof of Proposition~\ref{thm:ExistenceSelectionThm}. Below we will  construct the sequence $\{U^{n}_{N}\}_{n = 0}^{N}$ iteratively
starting from $U_N^0$
but we first need to take the preliminary step of establishing the existence
of a certain Borel measurable map $\Gamma: [0,T] \times V' \rightarrow V$
which is used at the heart of this
construction.

We define the continuous map $\mathfrak{G}: [0,T] \times V \rightarrow V_{(2)}'$ according to
\begin{align}
\mathfrak{G}(t,U) = U + \deltaN \bigl(AU + B(U) + EU - \SDT(U,t)\bigr),
\end{align}
and, for each $t \in [0,T]$ and $F \in V'$ we set:
\begin{align}
	\Lambda(t,F) = \left\{ U \in V : \langle \mathfrak{G}(t,U)- F, {{\mathfrakV}} \rangle = 0,
	\forall  {{\mathfrakV}} \in V_{(2)}
	\textrm{ and }  |U|^{2}
 +  \deltaN c_{1} \| U \|^{2} \leq \langle F + \SDT(t,U)\deltaN, U \rangle
 \right\}.
 \label{eq:LambdaDef}
\end{align}
Using this family of sets defined by \eqref{eq:LambdaDef} we now
establish the following Lemma:
\begin{Lem}\label{lem:BorelMappingLemma}
There exists a map $\Gamma: (0,T) \times V' \rightarrow V$ {which is universally Radon measurable (Radon measurable for every Radon measure on $(0,T) \times V'$)},
such that for every $t \in (0, T)$ and every $F \in V'$,
$U := \Gamma(t,F) \in \Lambda(t,F)$.
\end{Lem}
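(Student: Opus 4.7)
The plan splits the proof into two distinct tasks: proving $\Lambda(t,F) \neq \emptyset$ for every $(t,F) \in (0,T) \times V'$, and then selecting $\Gamma(t,F) \in \Lambda(t,F)$ in a universally Radon measurable way by verifying the hypotheses of Theorem~\ref{thm:MeasureSel}.

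For the first task, I fix $(t,F)$ and choose a Galerkin basis $\{\Phi_j\}_{j \geq 1} \subset V_{(2)}$ whose finite-dimensional spans $V_m := \mathrm{span}\{\Phi_1, \ldots, \Phi_m\}$ are dense in $V_{(2)}$. On $V_m$ I define the continuous map $\Theta_m: V_m \to V_m$ by
\begin{align*}
(\Theta_m(U), v) = (U,v) + \deltaN\bigl(a(U,v) + b(U,U,v) + e(U,v) - \SDvar(t,U,v)\bigr) - \langle F, v\rangle
\end{align*}
for all $v \in V_m$. Using coercivity \eqref{eq:AvarDef}, the antisymmetries \eqref{eq:eCancelation} and \eqref{eq:BCan} (applicable since $V_m \subset V_{(2)}$), Young's inequality on $\langle F, U \rangle$, and the sublinear bound \eqref{eq:SublinearConditionsOnSDTerm} together with the standing hypothesis $4\deltaN c_4 < 1$, I obtain an estimate of the form $(\Theta_m(U), U) \geq \frac{1}{4}|U|^2 + \frac{\deltaN c_1}{2}\|U\|^2 - C(1 + \|F\|_{V'}^2)$, which is strictly positive on a sufficiently large sphere. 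The standard consequence of Brouwer's fixed-point theorem then produces a zero $U_m \in V_m$ of $\Theta_m$, and retesting $(\Theta_m(U_m), U_m) = 0$ against $U_m$ reproduces the energy inequality in \eqref{eq:LambdaDef} for $U_m$ with a bound uniform in $m$. Extracting a subsequence with $U_{m'} \rightharpoonup U$ weakly in $V$ and $U_{m'} \to U$ strongly in $H$ via the compact embedding, I then pass to the limit in the defining equation: the linear and Coriolis terms converge by continuity, the $\SDT$-contribution by continuity of $\SDT$ on $H$, and the trilinear term by the weak-convergence hypothesis \eqref{eq:WeakLimCond}. The energy inequality survives via lower semicontinuity of $|\cdot|$ and $\|\cdot\|$, so $U \in \Lambda(t,F)$.

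For the second task, I view $\Lambda$ as a multifunction into the Polish space $H$. The same energy inequality yields a bound on $\|U\|$ depending only on $\|F\|_{V'}$, so $\Lambda(t,F)$ is bounded in $V$ and hence relatively compact in $H$; a verbatim repetition of the limiting argument above shows that $\Lambda(t,F)$ is sequentially closed in $H$, hence nonempty and compact. For measurability of the graph $\mathrm{gr}(\Lambda) \subset (0,T) \times V' \times H$, I exploit separability of $V_{(2)}$ to replace the defining condition by countably many scalar identities indexed by a fixed dense subset of $V_{(2)}$, and observe that each such identity together with the energy inequality defines a Borel subset once we restrict to the Borel set $V \subset H$ (the $V$-norm being $H$-lower semicontinuous by the compact embedding). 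Theorem~\ref{thm:MeasureSel} then provides the desired universally Radon measurable selection $\Gamma$. The main obstacle throughout is the weak-to-strong discontinuity of the nonlinear form $B$: both the Galerkin passage to the limit and the closed-values property of $\Lambda$ required by the selection theorem hinge on the restricted weak sequential continuity supplied by \eqref{eq:WeakLimCond}, together with the compact embedding $V \hookrightarrow H$ that upgrades weak $V$-convergence to strong $H$-convergence.
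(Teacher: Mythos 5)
Your first paragraph (nonemptiness of $\Lambda(t,F)$ via Galerkin truncation, Brouwer's fixed point theorem, the uniform $V$-bound under $4\deltaN c_4<1$, passage to the limit via compact embedding and \eqref{eq:WeakLimCond}, and lower semicontinuity for the energy inequality) is essentially identical to the paper's argument.

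The second part, however, departs from the paper and contains a genuine gap. You change the target space from $V$ to $H$ and then aim to verify \emph{Borel measurability of the graph} of $\Lambda$ in $(0,T)\times V'\times H$; but Theorem~\ref{thm:MeasureSel} as stated asks for something different and strictly stronger, namely that the graph be \emph{closed}: if $(t_n,F_n)\to(t,F)$ and $U_n\to U$ in the target space with $U_n\in\Lambda(t_n,F_n)$, then $U\in\Lambda(t,F)$. Borel measurability of the graph does not give this topological closedness, so the hypotheses of the cited theorem are not verified by your argument. (There are measurable-selection theorems for compact-valued multifunctions that work with measurable graph, e.g.\ in the Kuratowski--Ryll-Nardzewski line, and your compactness-in-$H$ observation would be useful there, but that is a different statement than Theorem~\ref{thm:MeasureSel}.) The paper's proof sidesteps all of this by keeping $Y=V$: with the strong $V$-topology on $U_n$, closedness of the graph is immediate from the continuity of $\mathfrak{G}:[0,T]\times V\to V_{(2)}'$ and of $\SDT:[0,T]\times H\to H$ --- no weak limits, no $B$-continuity hypothesis, no change of topology needed. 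If you want to keep your $H$-topology route you would instead need to prove the closed-graph property in $(0,T)\times V'\times H$ directly: use the uniform $V$-bound from the energy inequality to extract a weakly-$V$ convergent subsequence, identify its limit with the strong $H$-limit $U$, then pass to the limit in the defining identity using \eqref{eq:WeakLimCond}. That would repair the argument, but the paper's choice of topology is the more efficient one.
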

\begin{proof}
We establish the existence of the desired $\Gamma$ by showing that $\Lambda$
satisfies the conditions of Theorem~\ref{thm:MeasureSel}.  More precisely we need
to verify that\footnote{To apply Theorem~\ref{thm:MeasureSel} we
actually would like to define $\Lambda$ on the Banach space $\mathbb{R} \times V'$.
For this purpose we may simply take $\Lambda(t,F) = \Lambda(T,F)$ when $t > T$
and when $t < 0$ we let $\Lambda(t,F) = \Lambda(0,F)$.}
\begin{itemize}
\item[(i)] for each $t \in [0,T]$, $F \in V'$, the set $\Lambda(t,F)$ is \emph{non-empty} and that
\item[(ii)] $\Lambda(t,F)$ is \emph{closed}.  In other words we need to show that,
given any sequences
\begin{align*}
t_{n} \rightarrow t, \quad F_{n} \rightarrow F \textrm{ in } V', \quad U_{n} \rightarrow U \in V
\end{align*}
such that, for every $n$,
\begin{align*}
\langle \mathfrak{G}(t_{n},U_{n})- F_{n}, {{\mathfrakV}} \rangle = 0,
	\textrm{ for every } {{\mathfrakV}} \in V_{(2)}
	\textrm{ and }  |U_{n}|^{2}
 +  \deltaN c_{1} \| U_{n} \|^{2} \leq \langle F_{n} + \SDT(t_{n},U_{n})  \deltaN, U_{n} \rangle,
\end{align*}
we have
\begin{align*}
\langle \mathfrak{G}(t,U)- F, {{\mathfrakV}} \rangle = 0,
	\textrm{ for every } {{\mathfrakV}} \in V_{(2)}
	\textrm{ and }  |U|^{2}
 +  \deltaN c_{1} \| U \|^{2} \leq \langle F + \SDT(t,U) \deltaN, U \rangle.
\end{align*}
\end{itemize}

The first item, (i) may be established with a Galerkin scheme and
the Brouwer fixed point theorem along standard arguments typically used to prove the existence of
solutions for nonlinear elliptic equations of the type of Navier-Stokes and primitive equations (see Lemma 2.3, Page 26  in \cite{PetcuTemamZiane}).
 Since some specifics are
different here we briefly sketch some details of this argument.
Fix any $t \in [0,T]$ and any $F \in V'$ and consider a family $\{\Psi_{k}\}_{k \geq 1} \subset V_{(2)}$ which is free and total
in $V$.  For each $m \geq 1$ we seek an element $U_{m} = \sum_{j=1}^{m} {\beta}_{jm} \Psi_{j}$
such that
\begin{align}
	\langle \mathfrak{G}(t,U_{m}) - F, \Psi_{k}\rangle = 0 \textrm{ for every } k = 1, \ldots, m.
	\label{eq:mthGalAprox}
\end{align}
Observe that, for any $U_{m}$ of this form, using \eqref{eq:AvarDef}, \eqref{eq:eCancelation},
\eqref{eq:BCan}, and \eqref{eq:SublinearConditionsOnSDTerm}
we estimate
\begin{align*}
	\langle \mathfrak{G}(t,U_{m}) - F, U_{m}\rangle
	=& |U_{m}|^{2} + \deltaN (a(U_{m},U_{m}) - (\SDT(t, U_{m}),U_{m})) - \langle F, U_{m} \rangle\\
	\geq& |U_{m}|^{2} + \deltaN (c_{1} \|U_{m}\|^{2} -2 c_{4}(1 + |U_{m}|^{2})) - | F|_{V'} \| U_{m} \|\\
	\geq& \frac{ \deltaN c_{1} }{2} \|U_{m}\|^{2} - \frac{1}{2}\left(1 + \frac{1}{ \deltaN c_{1} } | F|_{V'}^2 \right).
\end{align*}
The last inequality follows from the assumption \eqref{eq:ExistenceStartingDiscretization} which implies
 that $2c_{4} \deltaN \leq 1$.
The existence of solutions for \eqref{eq:mthGalAprox} for any given $t, F$ of the form
$U_{m} = \sum_{j=1}^{m} {\beta}_{jm} \Psi_{j}$ thus follows for each $m$
from the Brouwer fixed point theorem.

We next seek   bounds on the resulting sequence of $U_{m}$'s in $V$ independent of $m$.  Starting
from \eqref{eq:mthGalAprox} we find that
\begin{align}
	|U_{m}|^{2} + c_{1}\deltaN \|U_{m}\|^{2} \leq& \deltaN (\SDT(t, U_{m}),U_{m}) + \langle F, U_{m} \rangle
	\notag\\
		\leq& 2c_{4} \deltaN (1+ |U_{m}|^{2}) +  \frac{1}{2 \deltaN c_{1} } | F|_{V'}^2 + \frac{ \deltaN c_{1} }{2} \|U_{m}\|^{2}.
		\label{eq:UniformBndsGalerkinApprox}
\end{align}
Using once again the standing assumption \eqref{eq:ExistenceStartingDiscretization} we have that $U_{m}$ is   bounded
in $V$ independently of $m$.  Passing to a subsequence as needed and using that $V$ is compactly embedded
in $H$ we infer the existence of an element $U$ such that $U_{m} \rightarrow U$ weakly in $V$ and strongly in $H$.

Returning to \eqref{eq:UniformBndsGalerkinApprox} and using the
lower semicontinuity of weakly convergent sequences we obtain that
$ |U|^{2} + c_{1}\deltaN \|U\|^{2} \leq \langle \SDT(t,U) + F, U \rangle$.  To show
that $U$ satisfies $\langle \mathfrak{G}(t,U)- F, {{\mathfrakV}} \rangle = 0$ for every ${{\mathfrakV}} \in V_{(2)}$
we simply invoke \eqref{eq:WeakLimCond} for $B$ and the other continuity assumptions on $A$,
$E$ and $\SDT$ and obtain this identity for ${{\mathfrakV}} = \Psi_{k}$ for each $k \geq 1$.
By linearity and density we therefore infer the identify for arbitrary ${{\mathfrakV}} \in V_{(2)}$.
With this we now have established (i). The second item, (ii), to show that $\Lambda$ is closed,
follows immediately from the continuity of $\mathfrak{G}$ from $[0,T] \times V$ into $V_{(2)}'$
and the continuity of $\SDT$ from $[0,T] \times H$ into $H$.  The proof of Lemma~\ref{lem:BorelMappingLemma}
is therefore complete.
\end{proof}

\paragraph{{Construction of an Adapted Solution}}
\label{sec:Construction}

\paragraph{{Step 1.}}{We will build
the desired sequence $\{U^{n}_{N}\}_{n = 0}^{N}$ inductively as follows:
 \begin{equation}\label{eq:u n}
 U^{n}_N =f^{n}_N (W \big|_{[0, t_{n}]} ),
 \end{equation}
 with $f^{n}_N  :\mathcal C ([0, t_{n}]; \, \mathfrak U_0) \rightarrow V$   measurable for $V$ equipped with
  $\mathcal B (V)$ and $\mathcal C ([0, t_{n}]; \,\mathfrak U_0)$ equipped with ${\mathcal G}_{n}:= {\mathcal G}_{t_n}$ (defined as in Section \ref{sec:specificfiltration}). }

Suppose that we have obtained $U^{n-1}_N $ for some $n \geq 2$.
 Since  ${\mathcal G}_{n-1}$ is  the completion of $\mathcal B (\mathcal C ([0, t_{n-1}]; \,\mathfrak U))$ with respect to the Wiener measure ${\mathbb P}$\footnote{We observe that the sigma algebra generated by the  $W (s)$  for   $s \in (0, t)$ is just   $ \mathcal \phi _t ^{-1} ( \mathcal B (\mathcal C ([0, T]; \mathfrak U_0))$,    where $\mathcal \phi_t: $ $ \mathcal C ([0, T]; \mathfrak U_0) \rightarrow \mathcal C ([0, T]; \mathfrak U_0)$ is the mapping $(\mathcal \phi _t ^{-1} \omega) (s) = \omega (t \wedge s); $ $0\leq s\leq T$  (see \cite{KaratzasShreve}).}, $f^{n-1}_N $  is  ${\mathbb P}$-measurable.
{Now we define $\mathfrak D^{n}_N : V \times V^\prime \times  \mathcal C ([0, t_{n}]; \, \mathfrak U_0)   \rightarrow V^\prime$ by setting
\begin{equation}\label{defofdn}
 \mathfrak D^{n}_N (x, y, z)  =x + y \, \Delta t  +  \sigma _N (t^{n-1}, U )z .
\end{equation}
Then we can   define
\begin{equation}\label{eq:unn1}
\begin{split}
U^n_N &=   \Gamma \left (t^n,\,\, \mathfrak D^{n}_N  \left(  U^{n-1}_{N}, \,\ell^{n}_{N} , \,  \eta^{n}_{N}   \right)\right)\\
&: = \chi   \left (t^n, \, U^{n-1}_{N},\, \ell^{n}_{N} , \,  \eta^{n}_{N}   \right) .
 \end{split}
\end{equation}
Since $\sigma_N$ is a continuous map, clearly $  \mathfrak D^{n}_N   $    is a continuous map. Moreover    $\Gamma$ is universally Radon measurable thanks to Lemma  \ref{lem:BorelMappingLemma}, hence Corollary \ref{cor:cont} applies and we infer that  $\chi  $ is    universally Radon measurable from the
Borel sigma algebra on $V \times V^\prime \times \mathcal  C ([0, t_{n}]; \, \mathfrak U_0)$ to the Borel sigma algebra on $V$  .}

  {Since $\ell = \ell(t)$ is a process assumed to be measurable with respect to the sigma algebra generated by the  $   {W} (s)$ for $s \in [0, t]   $, $\ell^{n}_{N}$ is measurable with respect to
  the sigma algebra generated by  the $  {W} (s)$ for $ s \in [0, t_n]$ thanks to  \eqref{eq:EllIncrement}. Hence by Theorem   \ref{doob} in the Appendix  with $\mathcal X$ as $\Omega$, $(\mathcal Y, \mathcal M)$ as $ ( \mathcal C ([0, t_{n}]; \, \mathfrak U_0), \mathcal B ( \mathcal C ([0, t_{n}]; \, \mathfrak U_0)   ))$, $\psi$ as  $W \big|_{[0, t_n]}$,   $\mathcal H$ as $V$, we see that there exists a function $L^n_N :\,\,\, \mathcal C ([0, t_n]; \,\mathfrak U_0) \rightarrow V$ which is Borel measurable,  such that
\begin{equation}\label{eq:LnN}
\ell ^n _N  =L^n_N   ( W \big|_{[0, t_n]} )   .
\end{equation}
 From  \eqref{eq:unn1} and \eqref{eq:LnN} we infer
 \begin{equation}\label{eq:unn2}
 \begin{split}
U^n_N &=  \kappa  (t^n,\,f^{n-1}_N (W \big|_{[0, t_{n-1}]} ) ,\, L^n_N  ( W \big|_{[0, t_n]} ), \,\eta^{n}_{N} )\\
&:=  f^n_N (W \big|_{[0, t_n]} ).
\end{split}
\end{equation}
Since $L^n_N  $  and  $f^{n-1}_N$ are   $ {\mathbb P}$-measurable and $\kappa $  is universally Radon measurable,
Theorem \ref{lem:composition} applies and we infer that $f^n_N$ is $ {\mathbb P}$-measurable, that is, $f^n_N$ is measurable with  respect to   ${\mathcal G}_{n}$.}

\paragraph{{Step 2.}}
{We infer that  $U^n_N : \Omega \rightarrow V$ is measurable with respect to   $\mathcal{G}_{n}$ as desired.}\\

 Observe moreover that,
according to Lemma~\ref{lem:BorelMappingLemma} (cf. \eqref{eq:LambdaDef}),
$
\langle \mathfrak{G}(t_{n},U^{n}_{N}), {{\mathfrakV}} \rangle
=
{\langle  \mathfrak D^{n}_N  \left(  U^{n-1}_{N}, \ell^{n}_{N} ,   \eta^{n}_{N}   \right) ,\, \tilde U \rangle}$,
for every
$
{{\mathfrakV}} \in V_{(2)}
$
and
$
  |U^{n}_{N}|^{2}
 +  \delta t c_{1} \| U^{n}_{N} \|^{2} \leq  { \langle   \mathfrak D^{n}_N  \left(  U^{n-1}_{N}, \ell^{n}_{N} ,   \eta^{n}_{N}   \right), \, U^n _N   \rangle}
$
which is to say that $U^{n-1}_{N}$ and $U^{n}_{N}$ satisfy \eqref{eq:EulerSemiFnRep}
and \eqref{eq:DiscreteInequality1}.

It remains to show that $U^{n}_{N} \in L^{2}(\Omega;V)$. We start from
\eqref{eq:DiscreteInequality1}, now established for $U^{n}_{N}$
and $U^{n-1}_{N}$, and use the elementary identity
$2(U - {{\mathfrakV}} , U) = |U|^2 -|{{\mathfrakV}}|^2 + |U - {{\mathfrakV}}|^2$ and
 obtain,
\begin{align}
|U^n_N|^2 - |U^{n-1}_N|^2 + |U^n_N - U^{n-1}_N|^2
  &+  2\deltaN c_1\|U^n_N\|^2 \notag\\
  &\leq
  2 \deltaN \bigl(\ell^n_N(U^n_N) + \SDvar(t^{n},U^{n}_{N}, U_{N}^{n})\bigr)
  + 2g_{N}(U^{n-1}_N, U^n_N) \eta_N^n,
  \label{eq:AppOfElementaryId}
\end{align}
almost surely. To address the terms involving $\ell$ we have that (cf. \eqref{eq:EllIncrement})
\begin{align*}
	|2 \deltaN \ell^n_N(U^n_N)| \leq 2 \int_{(n-1)\deltaN}^{n\deltaN} \| \ell (t) \|_{V'} \|U^n_N\|_Vdt
	\leq c_1 \deltaN \|U^n_N\|^2  + c_1^{-1}\zeta^n_N
\end{align*}
where we define $\zeta^n_N$ according to
\begin{align}
	\zeta^n_N=  \int_{(n-1)\deltaN}^{n\deltaN} \| \ell \|_{V'}^2 dt.
		\label{eq:ExternalForcingEqnSum}
\end{align}
For the terms involving $\SDvar$ defined as in (\ref{def:s}) we simply infer
from \eqref{eq:SublinearConditionsOnSDTerm}
\begin{align}
  2 \deltaN |\SDvar(t^{n},U^{n}_{N},U^{n}_{N})| \leq 4 \deltaN c_{4} (1+ |U^{n}_{N}|^{2}).
  \label{eq:SDvarDirectEstimate}
\end{align}
 With  H\"{o}lder's inequality
 we find
\begin{equation}\label{eq:g}
  \begin{split}
    |2 g_{N}(U^{n-1}_N,U^{n}_N -  U^{n-1}_N) \eta_N^n|
    \leq& \frac{1}{2} |U^{n}_N -  U^{n-1}_N|^2 +
            2|\sigma_{N}(U^{n-1}_N) \eta_N^n|^2.
 \end{split}
\end{equation}  Then using that $g_{N}$ is linear in its second argument,
\begin{equation}\label{eq:g2}
\begin{split}
 g_{N}(U^{n-1}_N, U^n_N) \eta_N^n &= g_{N}(U^{n-1}_N, U^{n-1}_N) \eta_N^n
 +g_{N}(U^{n-1}_N, U^{n}_N -  U^{n-1}_N) \eta_N^n \\
 & \leq  g_{N}(U^{n-1}_N, U^{n-1}_N) \eta_N^n
 + \left |g_{N}(U^{n-1}_N, U^{n}_N -  U^{n-1}_N) \eta_N^n\right | \\
 & \leq (\mbox{thanks to (\ref{eq:g})})\\
 & \leq  g_{N}(U^{n-1}_N, U^{n-1}_N) \eta_N^n + \frac{1}{2} |U^{n}_N -  U^{n-1}_N|^2 +
            2|\sigma_{N}(U^{n-1}_N) \eta_N^n|^2.
            \end{split}
 \end{equation}
Using these observations for $g_N$, $\ell^{n}_{N}$ and $\SDvar$  we rearrange and infer that,
up to a set of measure zero,
\begin{align}
|U^n_N|^2 - |U^{n-1}_N|^2 + \frac{1}{2}|U^n_N &- U^{n-1}_N|^2
+  \deltaN c_1\|U^n_N\|^2
\notag\\
\leq& c_1^{-1} \zeta^n_N +
       4 \deltaN c_{4} (1+ |U^{n}_{N}|^{2})+
       2g_{N}(U^{n-1}_N, U^{n-1}_N) \eta_N^n +
      2|\sigma_{N}(U^{n-1}_N) \eta_N^n|^2.
      \label{eq:DiscreteInequality2}
\end{align}
Using \eqref{eq:SublinearCondH},
\eqref{eq:SigmaApproxCond2} and that $U^{n-1}_{N}$ is ${{\mathcal{G}}}_{n-1}$-measurable and in $L^{2}(\Omega;H)$ we have
that
\begin{gather*}
   \E g_{N}(U^{n-1}_N, U^{n-1}_N) \eta_N^n = 0,\quad
  \E|\sigma_{N}(U^{n-1}_N) \eta_N^n|^2 = \deltaN \E |\sigma_{N}(U^{n-1}_N)|^2_{L_2(\mathfrak{U}, H)}
    \leq
    2 \deltaN c_{3}^2 \E  (1 + |U^{n-1}_N|^2).
\end{gather*}
From this observation, \eqref{eq:DiscreteInequality2} and \eqref{eq:ExistenceStartingDiscretization} we infer
\begin{align*}
 \E \deltaN c_1\|U^n_N\|^2  \leq \E\left( (4 \deltaN c_{4}-1)|U^n_N|^2
   +   c( |U^{n-1}_N|^2  + \zeta^n_N + 1) \right)
     \leq c\E( |U^{n-1}_N|^2  + \zeta^n_N + 1),
\end{align*}
which implies that $U^{n}_{N} \in L^{2}(\Omega;V)$, as needed.

We have thus
established the iterative step in the construction of $\{U_{N}^{n}\}_{n=0}^{N}$.
The base case, $n =1$, is established in an identical fashion to the iterative steps.
The proof of Proposition~\ref{thm:ExistenceSelectionThm} is now
complete.

\begin{Rmk}\label{notthefiltrationyet}
{Although necessary for the establishment of the existence of the $U^n_N$'s   in Proposition \ref{thm:ExistenceSelectionThm}, it is not necessary
 to assume the underlying stochastic basis to be $\mathcal{S}_{\mathcal{G}}$ (defined in subsection~\ref{sec:specificfiltration})
 in the    results throughout  Section \ref{sec:UniformEstimates} to Section \ref{sec:CompArgs}. The reason is that
 these results   are   true whenever such $U^n_N$'s defined as in Definition \ref{def:AdminEulerSchemes} exist;  in other words they  are independent of
 the choice of the underlying stochastic basis. Similarly, it is not necessary at this point to assume that $U^0$ and $\ell$  have laws which coincide with those of the externally
 given  $\mu_{U^0}$ and $\mu_{\ell}$ for these results.}

{However, it is necessary that we resume these assumptions of $\mathcal{S}_{\mathcal{G}}$, $\mu_{U^0}$ and $\mu_{\ell}$    starting in Section \ref{sec:SBPassagetoTheLimit}.}
\end{Rmk}

\subsection{Uniform `Energy' Estimates for the $U^{n}_{N}$}
\label{sec:UniformEstimates}
Starting from \eqref{eq:DiscreteInequality1}
we next determine certain uniform bounds,
independent of $N$,
for (suitable) sequences $\{U_N^n\}_{n=1}^{N}$
satisfying \eqref{eq:EulerSemiFnRep}
as follows:
\begin{Prop}
\label{thm:UniformBnds}
Let
\begin{align}
  N_{1} := 12 T c_{5}, \textrm{ with } c_{5} :=8c_{4} + 80c_{3}^{2},
\label{eq:BndsStartingDiscretization}
\end{align}
where $c_{3}$ and $c_{4}$ are from \eqref{eq:SublinearCondH} and
\eqref{eq:SublinearConditionsOnSDTerm}.
   Let   ${\mathcal{S}= (\Omega,\mathcal{F}, \{\mathcal{F}_t\}_{t \geq 0}, \mathbb{P},
\{W^k\}_{k \geq 1})} $  be
the given stochastic basis and assume that $\ell = \ell(t)$$\in L^2(\Omega;L^{2}(0,T; V'))$
  is
 measurable with respect to  $\mathcal {F}_t$.
  For each $N \geq N_{1}$ we assume that  $U_N^0 \in L^2(\Omega, H)$,
 is ${{\mathcal{F}}}_0$ measurable
  and such that
\begin{align}
  \sup_{N \geq N_{1}} \E |U_N^0|^2 < \infty.
  \label{eq:UniformBndIntialDataForUniformEstimates}
\end{align}
 Then for each $N \geq N_{1}$, consider the sequences $\{U_N^n\}_{n =1}^{N} \subset L^{2}(\Omega; V)$
  which satisfy \eqref{eq:EulerSemiFnRep} starting from $U_N^0$ and relative to $\ell$ in the
  sense of Definition~\ref{def:AdminEulerSchemes}.
 Then
  \begin{align}
  	\sup_{N \geq N_{1}} \E
	\left( \max_{0 \leq l \leq N}  |U^l_N|^2
	 +\sum_{k = 1}^{N} \bigl( |U^k_N - U^{k-1}_N|^2
 +  \deltaN \|U^k_N\|^2 \bigr)  \right)< \infty.
\label{eq:UniformBndConclusion}
  \end{align}
\end{Prop}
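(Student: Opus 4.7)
The plan is to derive \eqref{eq:UniformBndConclusion} directly from the pathwise discrete energy inequality \eqref{eq:DiscreteInequality2} established during the proof of Proposition~\ref{thm:ExistenceSelectionThm} (which holds for any admissible solution, not merely the one constructed there). Summing \eqref{eq:DiscreteInequality2} over $n = 1, \dots, l$ telescopes the leading differences and yields, almost surely,
\begin{align*}
|U^l_N|^2 + \tfrac{1}{2}\sum_{n=1}^l |U^n_N - U^{n-1}_N|^2 + c_1 \deltaN \sum_{n=1}^l \|U^n_N\|^2
\leq &\, |U^0_N|^2 + c_1^{-1}\int_0^T \|\ell\|_{V'}^2 dt + 4 T c_4 \\
& + 4 \deltaN c_4 \sum_{n=1}^l |U^n_N|^2 + 2 \mathcal{M}_l + 2 \mathcal{Q}_l,
\end{align*}
where $\mathcal{M}_l := \sum_{n=1}^l g_N(U^{n-1}_N, U^{n-1}_N)\eta^n_N$ is a discrete $\{\mathcal{F}_{t_n}\}$-martingale and $\mathcal{Q}_l := \sum_{n=1}^l |\sigma_N(U^{n-1}_N)\eta^n_N|^2$.

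To control $\mathcal{M}_l$ under the maximum without assuming higher moments, I would follow the stopping-time strategy flagged in the introduction and localize with
\begin{align*}
\tau_R := \inf\{n \geq 0 : |U^n_N|^2 > R\} \wedge N, \quad R > 0,
\end{align*}
which is a bona fide $\{\mathcal{F}_{t_n}\}$-stopping time by Definition~\ref{def:AdminEulerSchemes}. After stopping at $l \wedge \tau_R$, taking $\max_{0 \leq l \leq N}$ and then expectation, the martingale contribution is estimated via Burkholder--Davis--Gundy together with Cauchy--Schwarz,
\begin{align*}
\E \max_{l \leq N}|\mathcal{M}_{l \wedge \tau_R}| \leq c\, \E \left(\deltaN \sum_{n=1}^{\tau_R} |\sigma_N(U^{n-1}_N)|_{L_2(\mathfrak{U}, H)}^2 |U^{n-1}_N|^2\right)^{1/2},
\end{align*}
after which Young's inequality together with the sublinear bounds \eqref{eq:SigmaApproxCond2} and \eqref{eq:SublinearCondH} splits off a small multiple of $\E \max_{l \leq \tau_R}|U^l_N|^2$ that can be absorbed into the left-hand side. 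The quadratic term $\mathcal{Q}_l$ is routine, using the isometry $\E|\sigma_N(U^{n-1}_N)\eta^n_N|^2 = \deltaN \E|\sigma_N(U^{n-1}_N)|_{L_2(\mathfrak{U}, H)}^2$ (valid because $\eta^n_N$ is independent of $\mathcal{F}_{t_{n-1}}$) combined again with \eqref{eq:SigmaApproxCond2} and \eqref{eq:SublinearCondH}.

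After absorption, what remains is a discrete Gronwall inequality of the form $X_l^R \leq K_0 + K_1 \deltaN \sum_{n=1}^l X_n^R$, with $X_l^R := \E \max_{0 \leq k \leq l \wedge \tau_R}|U^k_N|^2$, where $K_0, K_1$ depend only on $c_1, c_3, c_4, T$, $\sup_N \E|U^0_N|^2$ and $\E\int_0^T \|\ell\|_{V'}^2 dt$---critically, uniformly in both $R$ and $N \geq N_1$. The precise threshold $N_1 = 12 T c_5$ is chosen to guarantee $12 c_5 \deltaN \leq 1$, which is exactly what is needed for the coefficient of $\E \max_l |U^l_N|^2$ on the left-hand side to remain positive after all absorptions. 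The discrete Gronwall lemma then yields $X_N^R \leq K_0 e^{K_1 T}$ uniformly in $R$ and $N$, and a final passage $R \to \infty$ via Fatou's lemma (using $\tau_R \to N$ almost surely, since each $U^k_N$ is a.s.\ finite-valued by Proposition~\ref{thm:ExistenceSelectionThm}) delivers \eqref{eq:UniformBndConclusion}.

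The main obstacle is the interplay between the BDG estimate under the maximum and the absorption step: without the stopping-time localization one cannot legitimately move a small multiple of $\E \max_l|U^l_N|^2$ from the right- to the left-hand side, since a priori this quantity could be infinite. The stopping time $\tau_R$ precisely sidesteps this issue and, in line with the point emphasized in the introduction, is what allows us to avoid imposing higher moment assumptions on $U^0$ or on $\sigma$ at this stage.
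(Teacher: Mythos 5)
Your proposal is correct and follows the same core strategy as the paper: telescope the pathwise energy inequality \eqref{eq:DiscreteInequality2}, isolate the martingale and quadratic-variation contributions, localize with a stopping time so that the discrete Burkholder--Davis--Gundy inequality applies to a square-integrable martingale, absorb the small multiple of $\E\max_l|U^l_N|^2$ produced by Young's inequality, and finally pass $R\to\infty$. The one place your argument departs from the paper's is the final iteration step. You close the estimate with the standard discrete Gronwall lemma applied to $X_l^R := \E\max_{0\leq k\leq l\wedge\tau_R}|U^k_N|^2$, absorbing only the same-index term $K_1\deltaN X_l^R$ from the sum (for which $\deltaN c_5$ small is plenty), keeping the rest of the sum intact. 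The paper instead makes the cruder bound $\deltaN\sum_{k=m}^{n+1}(1+|U^{k-1}_N|^2)\leq\deltaN(n-m+2)(1+\max_k|U^{k-1}_N|^2)$, which forces the absorption condition $c_5\deltaN(n-m+2)\leq 1/2$ to hold only on short time windows, so that \eqref{eq:DiscreteInequality5} must be iterated over at most $\ulcorner 16 c_5 T\urcorner$ subintervals. Both yield the same uniform-in-$N$ bound; your Gronwall route is a little cleaner and avoids the accounting of window lengths, while the paper's route sidesteps invoking the Gronwall lemma formally. One small point you leave implicit: once $\sup_N\E\max_l|U^l_N|^2<\infty$ is obtained, you still need to return to the telescoped inequality (the analogue of \eqref{eq:DiscreteInequality3}) and use \eqref{eq:ItoTypeInequality} to deduce the bound on $\E\sum_k\bigl(|U^k_N-U^{k-1}_N|^2+\deltaN\|U^k_N\|^2\bigr)$; this is how the paper finishes via \eqref{eq:FinalDiscreteInequalityPartTwo}.
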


\begin{proof}

The starting point for the estimates leading to \eqref{eq:UniformBndConclusion} is of course
\eqref{eq:DiscreteInequality1}
and from this inequality we can use the same proof as in Proposition~\ref{thm:ExistenceSelectionThm} to obtain
\eqref{eq:DiscreteInequality2}.
In order to make suitable estimates for the final
two terms in \eqref{eq:DiscreteInequality2} we need to take
advantage of some martingale structure in the terms involving $\sigma_{N}$.
For any $1 \leq m \leq n \leq N$ we define the stochastic processes
\begin{align}
  M^{m,n}_N &:= \sum_{k = m}^n  g_{N}(U^{k-1}_N, U^{k-1}_N) \eta_N^k,
  \quad
 Q^{m,n}_N  :=\sum_{k = m}^n  |\sigma_{N}(U^{k-1}_N) \eta_N^k|^2.
\label{eq:IncreasingProcess}
\end{align}
Summing \eqref{eq:DiscreteInequality2} for   $1 \leq m \leq n=k \leq l \leq N$
we find,
\begin{align}
  |U^l_N|^2
  + \sum_{k = m}^{l} ( \tfrac{1}{2} |U^k_N - U^{k-1}_N|^2
 &+  \deltaN c_1\|U^k_N\|^2 ) \notag\\
\leq& |U^{m-1}_N|^2 +    \sum_{k = m}^{l} (c_1^{-1} \zeta^k_N +  4 \deltaN c_{4} (1+ |U^{k}_{N}|^{2}))
     +2 M^{m,l}_N + 2 Q^{m,l}_N.
     \label{eq:DiscreteInequality3}
\end{align}
Since $\{U^{n}_{N}\}_{n=0}^{N} \subset L^{2}(\Omega;H)$ and is adapted to ${{\mathcal{{F}}}}_{n} :={{\mathcal{{F}}}}_{t^{n}}$,
it is easy to see that $\{M^{m,n}_{N}\}_{n= m}^{N}$ is a \emph{martingale}
relative to $\{\mathcal{{F}}_{n}\}_{n=m}^{N}$ with $M^{m,m}_{N} \equiv 0$.
We would like to apply a discrete version of the Burkholder-Davis-Gundy
inequality, recalled here as in
Lemma~\ref{thm:DiscreteBDG} to obtain estimates for
$\E \max_{ m \leq l \leq n} |M^{m,l}_N|$.  Unfortunately it is not
clear that $\{M^{m,n}_{N}\}_{n= m}^{N}$ is square integrable so
we have to apply a localization argument to make proper use of this
inequality.  For any $K > 0$ we define the stopping times
\begin{align*}
\tilde{n}_{K} = \min_{l \geq m} \{|U_{N}^{l-1}| \geq K \} \wedge N.
\end{align*}
Since $\{U^{n}_{N}\}_{n=0}^{N} \subset L^{2}(\Omega;H)$ we have that $\tilde{n}_{K} \uparrow N$
almost surely as $K \uparrow \infty$.  Clearly $\{M^{m,n\wedge \tilde{n}_{K}}_{N}\}_{n= m}^{N}$ is a square-integrable
martingale. For the moment let us recall  a discrete analogue of the Burkholder-Davis-Gundy Inequality.
This result  and other related  martingale inequalities can be found in e.g. \cite{Durrett2010}.
\begin{Lem}\label{thm:DiscreteBDG}
Assume that $\{M^{n}\}_{n \geq 0}$ is
a (discrete) martingale on a Hilbert space $\mathcal{H}$ (with
norm $| \cdot |$), relative to a given filtration
$\{ \mathcal{F}_{n}\}_{n \geq 0}$.
We assume, additionally that $M_{0} \equiv 0$
and that $\E |M_{n}|^{2} < \infty$, for all $n \geq 0$.
Then, for any $q \geq 1$ and any $n \geq 1$
\begin{align}
	\E \max_{ 1 \leq m \leq n} |M^{n}|^{q}
	 \leq c_{q} \E (A^{n})^{q/2},
	 \label{eq:BDGDiscreteInequality}
\end{align}
where $c_{q}$ is a universal positive constant
depending only on $q$\footnote{We may often determine $c_{q}$ in \eqref{eq:BDGDiscreteInequality} explicitly
and in particular we have that $c_{1} = 3$.} (which is independent of $n$
and $\{ M^{m}\}_{m\geq 0}$) and $A^{n}$ is the
\emph{quadratic variation} defined by
\begin{equation}\label{eq:QuadVar}
	A^{n} = \sum_{m= 1} ^{n} \E( |M^{m} - M^{m-1}|^{2} | \mathcal{F}_{m-1}).
\end{equation}
\end{Lem}
Hence with the observation that $\indFn{\tilde{n}_{K} \geq k}$ is ${{\mathcal{{F}}}}_{k-1}$-measurable
we compute the \emph{quadratic variation} of $\{M^{m,n\wedge \tilde{n}_{K}}_{N}\}_{n= m}^{N}$
 in view of \eqref{eq:QuadVar} as follows
\begin{align*}
  A^{m,n}_N =& \sum_{k = m}^n
  \E (|M^{m,k\wedge \tilde{n}_{K}}_{N} - M^{m,(k-1)\wedge \tilde{n}_{K}}_{N}|^{2}
  | {{\mathcal{{F}}}}_{k-1})
     = \sum_{k = m}^n \E (\indFn{\tilde{n}_{K} \geq k}|g_{N}(U^{k-1}_N, U^{k-1}_N) \eta_N^k|^2  | {{\mathcal{{F}}}}_{k-1})\\
     =& \sum_{k = m}^{\tilde{n}_{K}\wedge n} |g_{N}(U^{k-1}_N, U^{k-1}_N)|^2 \deltaN;
\end{align*}
Thus, by
Lemma~\ref{thm:DiscreteBDG}, \eqref{eq:SublinearCondH} and \eqref{eq:SigmaApproxCond2} we infer
\begin{align*}
   \E \max_{ m \leq l \leq n}& |M^{m,l\wedge \tilde{n}_{K}}_N|
    \leq 3  \E \left( \sum_{k = m}^{n \wedge \tilde{n}_{K}}  |g_{N}(U^{k-1}_N, U^{k-1}_N)|^2_{L_2}
                \deltaN \right)^{1/2}
    \leq 3  \E \left( \sum_{k = m}^n  |\sigma_{N}(U^{k-1}_N)|^2_{L_2(\mathfrak{U}, H)} | U^{k-1}_N|^2
                \deltaN \right)^{1/2}
                \notag\\
    \leq& 3c_3  \E \left( \sum_{k = m}^n 2(1 + |U^{k-1}_N|^2)| U^{k-1}_N|^2
                \deltaN \right)^{1/2}
   \leq \frac{1}{4}  \E \max_{ m \leq k \leq n} |U^{k-1}_N|^2
          +  18 c_3^2\E \sum_{k = m}^n  (1 + |U^{k-1}_N|^2) \deltaN .
\end{align*}
Hence, letting $K \uparrow \infty$, we have, by the monotone convergence theorem,
\begin{align}
   \E \max_{ m \leq l \leq n} |M^{m,l}_N|
   \leq \frac{1}{4}  \E \max_{ m \leq k \leq n} |U^{k-1}_N|^2
          + 18 c_3^2 \deltaN \E \sum_{k = m}^n  (1 + |U^{k-1}_N|^2).
          \label{eq:MG1}
\end{align}
On the other hand since $U^{n}_{N}$ is adapted to ${{\mathcal{{F}}}}_{n}$, given
the condition \eqref{eq:SublinearCondH} on $\sigma$ and \eqref{eq:SigmaApproxCond2} we infer
that
\begin{align}
   \E Q^{m,n}_N =
    \sum_{k = m}^n \E |\sigma_N(U^{k-1}_N)|^2_{L_2(\mathfrak{U}, H)} \deltaN
    \leq
    2c_{3}^2\deltaN \E \sum_{k = m}^n  (1 + |U^{k-1}_N|^2).
    \label{eq:ItoTypeInequality}
\end{align}

We now use \eqref{eq:MG1}, \eqref{eq:ItoTypeInequality} with
\eqref{eq:DiscreteInequality3} and infer that
\begin{align*}
  \E \max_{m \leq l \leq n} |U^l_N|^2
    \leq&  \E \left( 2|U^{m-1}_N|^2 + \sum_{k = m}^{n}    (c_1^{-1} \zeta^k_N +  4  c_{4} \deltaN(1+ |U^{k}_{N}|^{2}))
                + 2 \max_{m \leq l \leq n}  |M^{m,l}_N| + 2Q^{m,n}_N
              \right)\\
   \leq&  \E \left( 2|U^{m-1}_N|^2 +  \sum_{k = m}^{n}   (c_1^{-1} \zeta^k_N +  4  c_{4} \deltaN(1+ |U^{k}_{N}|^{2}))
         + 40 c_3^2 \deltaN \sum_{k = m}^n  (1 + |U^{k-1}_N|^2)
             \right)\\
             &\quad+ \frac{1}{2} \E \max_{ m \leq k \leq n} |U^{k-1}_N|^2.
\end{align*}
Rearranging we find that
\begin{align}
\E \max_{m \leq l \leq n}|U^l_N|^2
\leq&  \E \left( 2|U^{m-1}_N|^2 +   2 c_1^{-1}\sum_{k = m}^{n}   \zeta^k_N +
        c_5 \deltaN \E \sum_{k = m}^{n+1}  (1 + |U^{k-1}_N|^2)
          \right)
          \notag\\
\leq&  \E \left( 2 |U^{m-1}_N|^2 +   2c_1^{-1} \sum_{k = m}^{n}   \zeta^k_N
       + c_5 \deltaN (n -m+2) (1+ \E \max_{m \leq k \leq n+1} |U^{k-1}_N|^2)
          \right),
          \label{eq:DiscreteInequality4}
\end{align}
for the constant $c_5 = 8 c_{4} +80 c_3^2$ which in particular depends only on $c_3, c_{4}$.
Thus, subject to the condition:
\begin{equation}\label{eq:GronwallCondSmallTime}
  c_5 \deltaN  (n - m+2)\leq \frac{1}{2}, \quad  \textrm{ i.e.} \quad   \frac{ n -m+2}{N} \leq
  \frac{1}{2c_5 T},
\end{equation}
we have
\begin{align}
\E \max_{m \leq l \leq n}|U^l_N|^2
\leq c_{6} \E \left(  |U^{m-1}_N|^2 +    \sum_{k = m}^{n}\zeta^k_n
 + 1
        \right),
        \label{eq:DiscreteInequality5}
\end{align}
where $c_{6} = \max\{4  c_1^{-1},7\}$.
Thus, by iterating this inequality
and noting, cf. \eqref{eq:ExternalForcingEqnSum}, that $\sum_{k=1}^{N} \zeta^k_n = \|\ell\|_{L^{2}(0,T;V')}^{2}$,  we finally conclude that,
\begin{align}
  \E \max_{1 \leq l \leq N}|U^l_N|^2
  \leq c_{7} \E \left( |U_{N}^{0} |^2 +   \|\ell\|_{L^{2}(0,T;V')}^{2}
  + 1
     \right), \quad \textrm{ for all $N \geq N_1$.}
     \label{eq:FinalDiscreteInequalityPartOne}
\end{align}
Note carefully that, in view of \eqref{eq:GronwallCondSmallTime},
we need not iterate \eqref{eq:DiscreteInequality5} more
than, say, $\ulcorner 16c_5T \urcorner$ times to obtain \eqref{eq:FinalDiscreteInequalityPartOne}.\footnote{
 Indeed, for $N \geq N_{1}$, let $\mathfrak{N}(N)$ be the minimum number of iterations
of \eqref{eq:DiscreteInequality5}, subject to the constraint
\eqref{eq:GronwallCondSmallTime}, which are needed to establish
\eqref{eq:FinalDiscreteInequalityPartOne}.
Take $\mathfrak{F}(N)$ to be the `fraction of the time interval that can be covered
at each step', namely,
\begin{align*}
	\mathfrak{F}(N)
	:= \max_{n \in \mathbb{N}}
	\left\{ \frac{n}{N} :  n +2 \leq   \frac{N}{2c_{5} T} \right\}
	> \frac{1}{2c_{5}T} - \frac{3}{N} \geq \frac{1}{4c_{5}T},
\end{align*}
where the last inequality follows from the standing assumption \eqref{eq:BndsStartingDiscretization}.
Since $\mathfrak{N}(N) \mathfrak{F}(N) \leq 2$ we finally estimate:
\begin{align*}
	\mathfrak{N}(N) \leq \frac{2}{\mathfrak{F}(N)} \leq 16c_{5} T.
\end{align*}
Here $\ulcorner  p\urcorner=$ the smallest integer that is larger than or equal to $p$.
}
  As such we may take $c_{7} = (1 +c_{6})^{16c_{5}T}
  = (1+\max\{4c_{1}^{-1},7\})^{16T(8c_{4} + 80 c_{3}^{2})}$
which, crucially, is independent of $N$.

We now return to \eqref{eq:DiscreteInequality3}.
With \eqref{eq:ItoTypeInequality} we infer,
\begin{align}
  \E \sum_{k = 1}^{N} ( |U^k_N - U^{k-1}_N|^2
 +  2 c_1 \deltaN \|U^k_N\|^2 )
  \leq& \E \left( |U_{N}^{0}|^2 + \sum_{k = 1}^{N}  (c_1^{-1} \zeta^k_N +  4  c_{4}\deltaN (1+ |U^{k}_{N}|^{2}))
	+ 4 c_3^{2} \deltaN \sum_{k =1}^N (1 + |U^{k-1}_N|^2)
     \right)
     \notag\\
     \leq& c_{8}
     \E \left( |U_{N}^{0}|^2  + \max_{1 \leq l \leq N}|U^l_N|^2    +  \|\ell\|_{L^{2}(0,T; V')}^{2} + 1 \right),
     \label{eq:FinalDiscreteInequalityPartTwo}
\end{align}
where we can take, $c_{8} = \max\{1,c_{1}^{-1},4T(c_{3}^{2} + c_{4})\}$.
As such,  \eqref{eq:FinalDiscreteInequalityPartOne}
and \eqref{eq:FinalDiscreteInequalityPartTwo} with \eqref{eq:UniformBndIntialDataForUniformEstimates} imply
\eqref{eq:UniformBndConclusion},
completing the proof of Proposition~\ref{thm:UniformBnds}.
\end{proof}

\section{Continuous Time Approximations and  Uniform Bounds}
\label{sec:ContTimeProcesses}

In this section we detail how the sequences $\{U^{n}_{N}\}_{n=0}^{N}$
defined  in the sense of Definition~\ref{def:AdminEulerSchemes}
 may be used to define continuous time processes that approximate \eqref{eq:PEsAbstractFormulation}.
The details of establishing the compactness of the associated sequences of probability laws and of the passage
to the limit are given further on in Section~\ref{sec:TightnessAndLimit}.

We now fix sequences $\{U^{n}_{N}\}_{n=0}^{N}$ satisfying \eqref{eq:EulerSemiFnRep} in the sense of
Definition~\ref{def:AdminEulerSchemes}.  For $N \geq N_{1}$, with $N_{1}$ as in \eqref{eq:BndsStartingDiscretization}, let:
\begin{align}
U_{N}(t) &=
\begin{cases}
U^{0}_{N} &\textrm{ for } t \in [0, t^{1}],\\
U^{n}_{N}
	&\textrm{ for } t \in (t^{n}, t^{n+1}], \;
	n = 1, \ldots, N-1.
\end{cases} \label{eq:StepProcessesN}
\end{align}
Of course we do not have any time derivatives of the $U_N$'s
(even fractional in time) as are typically needed for compactness.
Furthermore we would like to be able to associate an approximate
stochastic equation for \eqref{eq:PEsAbstractFormulation} with these
$\{U_N^n\}_{n =0}^{N}$'s.
For these dual concerns we introduce further stochastic processes and consider:
\begin{align}
\bar{U}_{N}(t) &=
\begin{cases}
U^{0}_{N} &\textrm{ for } t \in [0, t^{1}]\\
U^{n-1}_{N} + \frac{U^{n}_{N} - U^{n-1}_{N}}{\deltaN} (t - t^{n})
	&\textrm{ for } t \in (t^{n}, t^{n+1}], \;
	n = 1,  \ldots, N-1.
\end{cases}
\label{eq:InterpProcessN}
\end{align}
\begin{Rmk}
The processes $U_{N}$ and $\bar{U}_{N}$
are slightly different than those typically used in the deterministic
case. See, e.g. \cite{Temam1}.
Actually, these processes are essentially
their deterministic analogues evaluated at time $t$
by their value at time $t -\deltaN$.  With this choice
we crucially obtain processes which are adapted to $\{\mathcal{{F}}_{t}\}_{t\geq0}$.
 Not surprisingly however the present definitions of
$U_{N}, \bar{U}_{N}$ leads to bothersome error terms
in \eqref{eq:InEQPlusError} below.  In turn these error terms
dictate the additional convergences in $\sigma$ and $U^0$
when we initially defined the discrete scheme \eqref{eq:EulerSemiFnRep};
cf. \eqref{eq:SigmaApproxCond1}--\eqref{eq:SigmaApproxCond3}
and Remark~\ref{rmk:SomeCrazyApproximation} above.    These error
terms also complicate compactness arguments further on in Section~\ref{sec:TightnessAndLimit}
and see Remark~\ref{rmk:errorsAreABitch}.
\end{Rmk}

The rest of this section is now devoted to proving the following desirable properties of
$U_{N}$ and $\bar{U}_{N}$:
\begin{Prop}\label{prop:BasicPropertiesOfConProcesses}
 Let  ${\mathcal{S}= (\Omega, \mathcal{F},\{\mathcal{F}_t\}_{t \geq 0}, \mathbb{P},
\{W^k\}_{k \geq 1})}$  be a  stochastic basis,
  and let $N_{1}$ be as in
  \eqref{eq:BndsStartingDiscretization} in
  Proposition~\ref{thm:UniformBnds}.
  Consider a sequence $\{U_N^0\}_{N \geq N_{1}}$   bounded in $L^2(\Omega, H)$ independly of $N$,
  with $U^0_N$ ${{\mathcal{F}}}_0$-measurable for each $N$ and such that
\begin{align}
	\E \left( (1+\|U_N^0\|^2) (1+ \|U_N^0\|^2_{V_{(2)}}) \right)
	 \leq c \deltaN^{-1} = c N,
	\label{eq:InitialDataBnd}
\end{align}
for a constant $c >0$, independent of $N$.\footnote{The constraint \eqref{eq:InitialDataBnd} is
necessary for \eqref{eq:UniformBoundAssertionContProcesses},\eqref{eq:GoodConvL2HErrors}.
This is not a serious restriction when we pass to the limit in Section~\ref{sec:TightnessAndLimit}; as we described above in
Remark~\ref{rmk:SomeCrazyApproximation}, for any given $U^{0} \in L^{2}(\Omega; H)$ we may obtain a sequence $U_{N}^{0}$
approximating $U^{0}$ which maintains \eqref{eq:InitialDataBnd}.}
  Suppose we also have defined a process  $\ell =\ell (t)\in L^2(\Omega; L^{2}(0,T; V'))$ adapted to $\{{\mathcal {F}_t}\}_{t\geq0}$.

For each $N \geq N_{1}$, we consider sequences $\{U_N^n\}_{n =1}^{N}$
  which satisfy \eqref{eq:EulerSemiFnRep} starting from $U_N^0$ in the
  sense of Definition~\ref{def:AdminEulerSchemes}.
Once these sequences $\{U_N^n\}_{n =0}^{N}$ exists, then
   we define the continuous time
  processes $\{U_{N}\}_{N \geq 1}$ and $\{\bar{U}_{N}\}_{N \geq 1}$
  according to \eqref{eq:StepProcessesN}
  and \eqref{eq:InterpProcessN} respectively.  Then,
  \begin{itemize}
  \item[(i)] for each $N \geq N_{1}$, $U_{N}$ and $\bar{U}_{N}$
  are $\{\mathcal{F}_{t}\}_{t\geq0}$-adapted and
  \begin{align}
  \{U_{N}\}_{N  \geq N_{1}} \textrm{ and } \{\bar{U}_{N}\}_{N \geq N_{1}}
  \textrm{ are
  bounded in }
  L^2(\Omega;L^{2}( 0, T ;V) \cap L^{\infty}( 0, T ;H)).
  \label{eq:UniformBoundAssertionContProcesses}
  \end{align}
  Moreover we have that
  \begin{align}
  \lim_{N \uparrow \infty} \E \int_{0}^{T} |U_{N} - \bar{U}_{N}|^{2} dt =0.
    \label{eq:ConvTogetherUUbar}
  \end{align}
  \item[(ii)] $U_{N}$ and $\bar{U}_{N}$ satisfy a.s. and  for every $t \geq 0$,
  \begin{align}
       	\bar{U}_{N}(t) = U^{0}_{N}  + \int_{0}^{t} (\mathcal{N}(U_{N}) + \ell_{N})ds
		      + \int_{0}^{t} \sigma_{N}(U_{N}) dW + \mathcal{E}_{N}^{D}(t) + \mathcal{E}_N^{S}(t),
  \label{eq:InEQPlusError}
  \end{align}
  subject to error terms $\mathcal{E}_{N}^{D}(t) \in L^2(\Omega;L^2( 0, T ;V'))$, $\mathcal{E}_N^{S}(t)
  \in L^2(\Omega;L^2( 0, T ;H))$ which are defined explicitly
  in \eqref{eq:ErrorTerm1}, \eqref{eq:ErrorTerm2} below. 
  \item[(iii)] These error terms  $\mathcal{E}_{N}^{D}(t)$, $\mathcal{E}_N^{S}(t)$
  satisfy
  \begin{align}
    \lim_{N \uparrow \infty} \E \|\mathcal{E}_{N}^{D}\|_{L^{2}( 0, T ;V')}^{2} =0,
    \label{eq:GoodConvL2HErrors}\\
    \lim_{N \uparrow \infty} \E \|\mathcal{E}_N^{S}\|_{L^{2}( 0, T ;H)}^{2} = 0,
    \label{eq:GoodConvL2HErrorsStochastic}
  \end{align}
  and moreover
  \begin{align}
    \sup_{N \geq N_{1}} \E \| \mathcal{E}_{N}^{S}\|_{L^{\infty}( 0, T ;H) \cap L^{2}( 0, T ;V)}^{2} < \infty.
  \label{eq:UniformBndEStoch}
  \end{align}
  \end{itemize}
\end{Prop}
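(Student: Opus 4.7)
The plan is to handle the three items in sequence, with Proposition~\ref{thm:UniformBnds} as the main workhorse. For item~(i), adaptedness of $U_{N}$ and $\bar{U}_{N}$ is immediate from the construction: at $t \in (t^{n}, t^{n+1}]$ both processes are measurable functions of $U^{n-1}_{N}$ and $U^{n}_{N}$, each $\mathcal{F}_{t^{n}} \subset \mathcal{F}_{t}$-measurable. The estimate \eqref{eq:UniformBoundAssertionContProcesses} translates \eqref{eq:UniformBndConclusion} directly via
\begin{align*}
\sup_{t \in [0,T]} |U_{N}(t)|^{2} = \max_{0 \leq n \leq N-1} |U^{n}_{N}|^{2}, \qquad \int_{0}^{T} \|U_{N}\|^{2} dt = \deltaN \sum_{n=0}^{N-1} \|U^{n}_{N}\|^{2},
\end{align*}
together with the analogous interpolation estimates for $\bar{U}_{N}$. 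For \eqref{eq:ConvTogetherUUbar} one uses the pointwise identity $U_{N}(t) - \bar{U}_{N}(t) = (U^{n}_{N} - U^{n-1}_{N})(1 - (t - t^{n})/\deltaN)$ on $(t^{n}, t^{n+1}]$ to get $\E \int_{0}^{T} |U_{N} - \bar{U}_{N}|^{2} dt \leq \deltaN\, \E \sum_{n=1}^{N} |U^{n}_{N} - U^{n-1}_{N}|^{2} = O(\deltaN)$ by Proposition~\ref{thm:UniformBnds}.

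For item~(ii), I would telescope the scheme \eqref{eq:EulerSemiFnRep} from $n = 1$ to $m - 1$ and append the fractional contribution $(t - t^{m})/\deltaN$ of the $m$-th increment, obtaining an explicit closed form for $\bar{U}_{N}(t) - U^{0}_{N}$ on each $(t^{m}, t^{m+1}]$. Comparing with the right-hand side of \eqref{eq:InEQPlusError} --- where $U_{N}$ enters as the lagged step process and $\ell_{N}$ is lagged likewise so as to keep $\sigma_{N}(s, U_{N}(s))$ and the $\ell_{N}$ integral adapted --- one reads off $\mathcal{E}^{D}_{N}$ and $\mathcal{E}^{S}_{N}$ as the explicit residuals. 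Specifically, $\mathcal{E}^{D}_{N}$ collects (a) the initial endpoint $\int_{0}^{t^{1}} \mathcal{N}(s, U^{0}_{N}) ds$ present in the target integral but absent from the scheme sum, (b) the interior time-quadrature residuals $\int_{t^{k}}^{t^{k+1}} [\SDT(t^{k}, U^{k}_{N}) - \SDT(s, U^{k}_{N})] ds$, (c) an analogous end-point piece on $(t^{m}, t]$, and (d) a residual for $\ell$ that vanishes with a suitable choice of $\ell_{N}^{0}$. The stochastic error $\mathcal{E}^{S}_{N}$ collects the time-shift residuals $\int_{t^{n-1}}^{t^{n}} [\sigma_{N}(t^{n-1}, U^{n-1}_{N}) - \sigma_{N}(s, U^{n-1}_{N})] dW$ together with the boundary pieces over $(t^{m-1}, t^{m}]$ weighted against $(t - t^{m})/\deltaN$ and the residual $\int_{t^{m}}^{t} \sigma_{N}(s, U^{m}_{N}) dW$.

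Item~(iii) is the real work. For $\mathcal{E}^{D}_{N}$, the interior quadrature residuals (b), (c) are controlled in $H$ (hence in $V'$) by pointwise differences $|\SDT(t^{k}, U^{k}_{N}) - \SDT(s, U^{k}_{N})|^{2}$, which tend to zero by continuity of $\SDT$ and dominated convergence on top of Proposition~\ref{thm:UniformBnds}, with sublinearity \eqref{eq:SublinearConditionsOnSDTerm} providing the majorizer. The initial-endpoint piece (a) carries the $B$-nonlinearity and requires the $V'$-estimate \eqref{eq:BSizeHNorm}, which in turn requires $U^{0}_{N} \in V_{(2)}$; the scaling \eqref{eq:InitialDataBnd} is precisely calibrated so that
\begin{align*}
\E \Big\| \int_{0}^{t^{1}} \mathcal{N}(s, U^{0}_{N}) ds \Big\|^{2}_{V'} \leq c \deltaN^{2}\, \E\bigl[(1 + \|U^{0}_{N}\|^{2})(1 + \|U^{0}_{N}\|^{2}_{V_{(2)}})\bigr] \leq c \deltaN^{2} \cdot \deltaN^{-1} = O(\deltaN),
\end{align*}
yielding \eqref{eq:GoodConvL2HErrors}. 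For $\mathcal{E}^{S}_{N}$, \eqref{eq:GoodConvL2HErrorsStochastic} follows from It\^o's isometry, the time-continuity of $\sigma$, \eqref{eq:SigmaApproxCond3}, \eqref{eq:SublinearCondH}, and dominated convergence; the $L^{\infty}_{t} H$ part of \eqref{eq:UniformBndEStoch} follows by the continuous-time Burkholder-Davis-Gundy inequality applied to the martingale pieces of $\mathcal{E}^{S}_{N}$. The main obstacle is the $L^{2}_{t} V$ part of \eqref{eq:UniformBndEStoch}, where the excess-regularity condition \eqref{eq:SigmaApproxCond1} pays off: by It\^o's isometry a typical boundary piece satisfies
\begin{align*}
\E \Big\| \int_{t^{m}}^{t} \sigma_{N}(s, U^{m}_{N}) dW \Big\|^{2}_{V} \leq N\, \E \int_{t^{m}}^{t} |\sigma(s, U^{m}_{N})|^{2}_{L_{2}(\mathfrak{U}, H)} ds \leq c N(t - t^{m})(1 + \E|U^{m}_{N}|^{2});
\end{align*}
integrating in $t$ over $(t^{m}, t^{m+1}]$ and summing over $m = 0, \ldots, N-1$ produces a bound of order $N^{2} \deltaN^{2} \cdot \sup_{m}(1 + \E|U^{m}_{N}|^{2}) = T^{2} \sup_{m}(1 + \E|U^{m}_{N}|^{2})$, uniform in $N$ by Proposition~\ref{thm:UniformBnds}.
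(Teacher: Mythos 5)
Your overall roadmap matches the paper's: item~(i) is a direct translation of Proposition~\ref{thm:UniformBnds}, item~(ii) comes from telescoping the scheme, and the key calculations in item~(iii)---the $V'$-estimate for the initial-endpoint piece using the calibration \eqref{eq:InitialDataBnd}, and the $L^2(0,T;V)$ bound for $\mathcal{E}^S_N$ via It\^o's isometry together with \eqref{eq:SigmaApproxCond1}---are precisely those in the paper.

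There is, however, one genuine gap. You have implicitly chosen to read the right-hand side of \eqref{eq:InEQPlusError} with the time argument of $\SDT$ and $\sigma_N$ \emph{varying} (i.e.\ $\int_0^t \mathcal{N}(s,U_N(s))\,ds$ and $\int_0^t \sigma_N(s,U_N(s))\,dW$), which is why your decomposition contains the interior quadrature residuals
\begin{align*}
\int_{t^{k}}^{t^{k+1}} \bigl[\SDT(t^{k}, U^{k}_{N}) - \SDT(s, U^{k}_{N})\bigr] ds
\quad\text{and}\quad
\int_{t^{n-1}}^{t^n} \bigl[\sigma_N(t^{n-1},U^{n-1}_N) - \sigma_N(s,U^{n-1}_N)\bigr] dW.
\end{align*}
You propose to kill these by ``continuity of $\SDT$ (resp.\ $\sigma$) and dominated convergence,'' with the sublinear bounds \eqref{eq:SublinearConditionsOnSDTerm}, \eqref{eq:SublinearCondH} as majorizers. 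That argument does not close under the paper's standing hypotheses. Dominated convergence requires pointwise convergence $\SDT(t^k,U^k_N)-\SDT(s,U^k_N)\to 0$, which you would get if $U^k_N$ were \emph{fixed} while the mesh shrinks; but here $U^k_N$ varies with $N$ together with the mesh, and $\SDT$, $\sigma$ are only assumed continuous on $[0,\infty)\times H$---not uniformly continuous in time locally uniformly over $H$ (and bounded sets of $H$ are not compact). So neither uniform continuity nor any modulus is available at this stage, and the term does not obviously vanish. The paper sidesteps this entirely: $\mathcal{N}(U_N)$ and $\sigma_N(U_N)$ in \eqref{eq:InEQPlusError} are the \emph{step} processes in both state and time, so the continuous-time integrands coincide exactly with the frozen-time integrands of the scheme and no quadrature residual from the time variable arises. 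The only corrections needed are the two endpoint pieces ($\mathcal{E}^{D,1}_N$, $\mathcal{E}^{D,2}_N$, $\mathcal{E}^{S,1}_N$, $\mathcal{E}^{S,2}_N$ in \eqref{eq:ErrorTerm1}--\eqref{eq:ErrorTerm2}), and the passage from frozen time to exact time is postponed to the limit identification in Section~\ref{sec:SBPassagetoTheLimit}, where the a.s.\ Skorokhod convergences and \eqref{eq:SigmaApproxCond3} make it manageable. You should adopt the frozen-time reading of \eqref{eq:InEQPlusError}; with that change, the residuals (b) and the time-shift stochastic residuals disappear, and the rest of your computation goes through as written. Finally, for the $L^\infty(0,T;H)$ part of \eqref{eq:UniformBndEStoch}: your appeal to ``BDG applied to the martingale pieces'' needs the same interval-by-interval splitting as the paper's use of Doob, since $\mathcal{E}^S_N$ is not a single martingale; with the split the argument is fine.
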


We proceed to prove Proposition~\ref{prop:BasicPropertiesOfConProcesses} in a series of subsections below.
The proof of (i) is essentially a direct application of Proposition~\ref{thm:UniformBnds} and we provide the details
in the subsection immediately following.  In Subsection~\ref{sec:ApproxEvolution} we provide the details of the derivation
of \eqref{eq:InEQPlusError} and in particular explain the origin of the error terms $\mathcal{E}_{N}^{D},\mathcal{E}_{N}^{S}$.  The
final Subsection~\ref{sec:ErrorTermEstimates} provides details of the estimates for these error terms which lead to \eqref{eq:GoodConvL2HErrors}--\eqref{eq:UniformBndEStoch}.

\begin{Rmk}\label{rmk:errorsAreABitch}
It is not straightforward  to obtain  fractional in time estimates for
$\bar{U}_{N}$ from \eqref{eq:InEQPlusError} in view of the error terms
which have a rather complicated structure (see \eqref{eq:ErrorTerm1}, \eqref{eq:ErrorTerm2} below).
As such, we can not
establish sufficient compactness for the sequence $\bar{U}_{N}$ directly to facilitate the
passage to the limit.    For this reason we   choose  to introduce additional continuous time
processes in Section~\ref{sec:TightnessAndLimit} below.
An alternate approach will be presented later on in the related work \cite{GlattTemam2011}.
\end{Rmk}

\subsection{Uniform Bounds and Clustering}
\label{sec:UniformBndsConProcesses}
It is clear from \eqref{eq:StepProcessesN} that $U_{N}$ is $\{\mathcal{F}_{t}\}_{t\geq0}$-adapted and
that
\begin{align*}
    \E \left( \sup_{t \in [0,T]} |U_{N}|^2 + \int_0^T \|U_{N}\|^2 dt \right)
  =  \E \left( \max_{0 \leq m \leq N-1}  |U_{N}^{m}|^{2}  +   \sum_{m =0}^{N-1}  \deltaN  \|U_{N}^{m}\|^{2}\right).
\end{align*}
Thus, since \eqref{eq:UniformBndIntialDataForUniformEstimates} holds we have the uniform bound \eqref{eq:UniformBndConclusion} from Proposition~\ref{thm:UniformBnds} and we immediately infer that
\begin{align}
  \sup_{N \geq N_1} \E \left( \sup_{t \in [0,T]} |U_{N}|^2 + \int_0^T \|U_{N}\|^2 dt \right)
 	< \infty,
	\label{eq:UniformBoundUProcess}
\end{align}
with $N_1$ the integer appearing in \eqref{eq:BndsStartingDiscretization}.

As with the $U_{N}$ above, it is easy to see  from \eqref{eq:InterpProcessN}   that
$\bar{U}_{N}$ is adapted to $\{\mathcal{F}_{t}\}_{t\geq0}$
and  that $\{U_N^n\}_{n =1}^{N}$ is adapted to ${{\mathcal{F}}}_n (= {{\mathcal{F}}}_{t^n})$.
Furthermore, direct calculations show that:
\begin{align}
U_{N} - \bar{U}_{N}(t) &=
\begin{cases}
0  &\textrm{ for } t \in [0, t^{1}],\\
  \frac{U^{n}_{N} - U^{n-1}_{N}}{\deltaN} (t^{n+1} - t)
	&\textrm{ for } t \in (t^{n}, t^{n+1}], \;
	n = 1,  \ldots, N-1.
\end{cases}
\label{eq:DiffProcess}
\end{align}
Using \eqref{eq:DiffProcess} we compute, similarly to
e.g. \cite{Temam1}, that
\begin{align*}
	\E \int_{0}^{T} |U_{N} - \bar{U}_{N}|^{2} dt =
	\sum_{n = 1}^{N-1} \E \left|
	U^{n}_{N} - U^{n-1}_{N}\right|^2  \int_{t^n}^{t^{n+1}} \left(\frac{t^{n+1} - t}{\deltaN}\right)^2 dt
	= \frac{\deltaN}{3} \E \sum_{n =1}^{N} |U^{n}_{N} - U^{n-1}_{N}|^{2}.
\end{align*}
We thus infer \eqref{eq:ConvTogetherUUbar} directly from this observation and  \eqref{eq:UniformBndConclusion}.
Based on similar considerations we also have
\begin{align*}
\E \int_{0}^{T} \| \bar{U}_{N} \|^{2}  \leq c\deltaN \E \sum_{n =0}^{N} \|U^{n}_N\|^{2} = c\deltaN \E  \|U^{0}_N\|^{2} + \deltaN \E \sum_{n =1}^{N} \|U^{n}_N\|^{2}.
\end{align*}
Thus, once again due to \eqref{eq:InitialDataBnd} and \eqref{eq:UniformBndConclusion}, we finally have
\begin{align}
  \sup_{N \geq N_{1}} \E \left( \sup_{t \in [0,T]} |\bar{U}_N|^2 + \int_0^T \|\bar{U}_N\|^2 dt \right)
 	< \infty.
	\label{eq:UniformBoundsOnbarUProcess}
\end{align}
With \eqref{eq:UniformBoundUProcess} and \eqref{eq:UniformBoundsOnbarUProcess}
we have now established the first item in Proposition~\ref{prop:BasicPropertiesOfConProcesses}.

\subsection{The Approximate Stochastic Evolution Systems}
\label{sec:ApproxEvolution}
We next derive the equation \eqref{eq:InEQPlusError} relating
$U_{N}$ and $\bar{U}_N$ giving explicit expressions for
$\mathcal{E}_N^D$, $\mathcal{E}_N^S$.
We observe that, almost surely and for almost every $t \geq 0$ (in fact for every $t \not \in \{ t_{0}, t_{1},  \ldots, t_{N} \} $)
\begin{align}
  \frac{d}{dt} \bar{U}_{N}(t) = \sum_{n=1}^{N -1} \frac{U^{n}_{N} - U^{n-1}_{N}}{\deltaN} \chi_{(t^{n}, t^{n+1})}(t),
  \label{eq:barUDerivative}
\end{align}
where $\chi (t_1, t_2)$ denotes the indicator function of $(t_1, t_2)$.
 Recall that $\eta^{n}_{N} = W(t^{n}) - W(t^{n-1})$ and let $N^{t}_{*} := \min \{ n : t^{n} \geq t \}$
in other words we take $N^{t}_{*}$ such that
$$
	N_{*}^{t} \deltaN \leq t < (N_{*}^{t} + 1) \deltaN.
$$
Working from \eqref{eq:barUDerivative} and \eqref{eq:EulerSemiFnRep} we therefore compute
\begin{align}
  \bar{U}_{N}(t) &= U^{0}_{N} + \int_{0}^{t} \sum_{n=1}^{N-1} \frac{U^{n}_{N} - U^{n-1}_{N}}{\deltaN} \chi_{(t^{n}, t^{n+1})}(s) ds
  \notag\\
  	&= U^{0}_{N}  + \int_{0}^{t} \sum_{n=1}^{N-1}  (\mathcal{N}(U^{n}_{N}) + \ell^n_N)\chi_{(t^{n}, t^{n+1})}(s) ds
		      + \int_{0}^{t} \sum_{n=1}^{N-1} \sigma_N(U^{n-1}_{N}) \frac{\eta^{n}_{N}}{\deltaN} \chi_{(t_{n}, t_{n+1})}(s) ds
		      \notag\\
        	&= U^{0}_{N}  + \int_{0}^{t} (\mathcal{N}(U_{N}) + \ell_{N})ds
		      + \int_{0}^{t} \sigma_N(U_{N}) dW + \mathcal{E}_{N}^{D}(t) + \mathcal{E}_N^{S}(t),
		      \label{eq:ApproxEvolutionComputation}
\end{align}
where the `error terms',  $\mathcal{E}^{D}_{N}(t)$ and $\mathcal{E}^{S}_{N}(t)$, are defined as:
\begin{equation}\label{eq:ErrorTerm1}
\begin{split}
  \mathcal{E}^{D}_{N}(t) := - \mathcal{N}(U^{0}_N) \deltaN \wedge t - \left(\int_{t^{N_*^t-1}}^t \ell_{N} ds +  \ell^{N_*^t - 1}_N (t^{N_*^t} - t)
  \chi_{t > t^{1}} \right)
  = \mathcal{E}^{D,1}_N(t) + \mathcal{E}^{D,2}_N(t),
\end{split}
\end{equation}
and
\begin{equation}\label{eq:ErrorTerm2}
  \mathcal{E}^{S}_{N}(t) :=
  -  \sigma_N(U^{N_{*}^{t}-2}_{N}) \frac{\eta^{N_{*}^{t} -1}_{N}}{\deltaN} (t^{N_{*}^{t}} - t) \chi_{t > t^{1}}
  - \int_{t^{N_{*}^{t}-1}}^{t} \sigma_N(U_{N}) dW
  := \mathcal{E}^{S,1}_N(t) +  \mathcal{E}^{S,2}_N(t).
\end{equation}
To understand the origin of these error terms we observe that
\begin{align*}
\int_{0}^{t} \sum_{n=1}^{N-1}  \mathcal{N}(U^{n}_{N}) \chi_{(t^{n}, t^{n+1})}(s) ds
   &= \int_{0}^{t} \sum_{n=0}^{N-1}  \mathcal{N}(U^{n}_{N})\chi_{(t^{n}, t^{n+1})}(s) ds
   	- \mathcal{N}(U^{0}_N) \deltaN \wedge t\\
  &= \int_{0}^{t} \mathcal{N}(U_{N}) ds
         + \mathcal{E}^{D,1}_{N}(t).
\end{align*}
Moreover, using the definition of the $\ell^{n}_{N}$'s in \eqref{eq:EllIncrement}, we have
\begin{align*}
 \int_{0}^{t} \sum_{n=1}^{N-1} \ell^n_N \chi_{(t^{n}, t^{n+1})}(s) ds
 =&  \int_{0}^{t^{N_*^t}} \sum_{n=1}^{N-1} \ell^n_N \chi_{(t^{n}, t^{n+1})}(s) ds
  + \left (\int_{t^{N_*^t}}^{t} \ell^{N_*^t - 1}_N ds \right )\chi_{t > t^{1}}\\
 =&  \sum_{n=1}^{N_*^t-1} \ell^n_N \deltaN + \ell^{N_*^t - 1}_N ( t -t^{N_*^t}  )\chi_{t > t^{1}}
 =  \int_0^{t^{N_*^t-1}} \ell ds +  \ell^{N_*^t - 1}_N ( t -t^{N_*^t} )\chi_{t > t^{1}}\\
 =&\int_0^{t} \ell ds- \int_{t^{N_*^t-1}}^t \ell ds + \ell^{N_*^t - 1}_N (t -t^{N_*^t}  )\chi_{t > t^{1}}.\\
 \end{align*}
On the other hand for the error terms $\mathcal{E}^{S}_{N}(t)$ involving $\sigma_N$ in \eqref{eq:ErrorTerm2},
we compute,
\begin{align*}
	\int_{0}^{t} \sum_{n=1}^{N-1} \sigma_N(U^{n-1}_{N}) \frac{\eta^{n}_{N}}{\deltaN} \chi_{(t^{n}, t^{n+1})}(s) ds
    &= \int_{0}^{t^{N_{*}^{t}}} \sum_{n=1}^{N_{*}^{t}-1} \! \! \sigma_N(U^{n-1}_{N}) \frac{\eta^{n}_{N}}{\deltaN} \chi_{(t^{n}, t^{n+1})}(s) ds
         -  \! \! \int_{t}^{t^{N_{*}^{t}}} \! \! \! \! \! \sigma_N(U^{N_{*}^{t}-2}_{N}) \frac{\eta^{N_{*}^{t} -1}_{N}}{\deltaN} ds\chi_{t > t^{1}}\\
    &= \sum_{n=1}^{N_{*}^{t}-1} \sigma_N(U^{n-1}_{N}) \eta^{n}_{N}
         -  \sigma_N(U^{N_{*}^{t}-2}_{N}) \frac{\eta^{N_{*}^{t} -1}_{N}}{\deltaN} (t^{N_{*}^{t}} - t)\chi_{t > t^{1}}\\
    &= \int_{0}^{t^{N_{*}^{t}-1}} \sigma_N(U_{N}) dW
         -  \sigma_N(U^{N_{*}^{t}-2}_{N}) \frac{\eta^{N_{*}^{t} -1}_{N}}{\deltaN} (t^{N_{*}^{t}} - t)\chi_{t > t^{1}}\\
    &= \int_{0}^{t} \sigma_N(U_{N}) dW + \mathcal{E}^{S}_{N}(t).
\end{align*}

\subsection{The Estimates for the Error Terms}
\label{sec:ErrorTermEstimates}

We next proceed to make estimates on the error terms $\mathcal{E}^D_N$ and $\mathcal{E}^S_N$
as desired in \eqref{eq:GoodConvL2HErrors}, \eqref{eq:UniformBndEStoch}.
Perusing \eqref{eq:ErrorTerm1} we begin with estimates for $\mathcal{E}^{D,1}_N$.
Invoking the bounds provided by \eqref{eq:BSizeHNorm} along with the
continuity properties of the other operators making up $\mathcal{N}$
in \eqref{eq:TheDriftPartTheWholePotatoe} defined in Section~\ref{sec:BasicOperators}
we have:
\begin{align*}
	\E \sup_{t \in [0,T]} \| \mathcal{E}^{D,1}_N(t)\|_{V'}^2
	\leq \deltaN^{2} \E \|\mathcal{N}(U_N^0)\|_{V'}^2
	\leq c\deltaN^{2} \E\left( (1 + \|U_N^0\|^2) (1 +\| U_N^0\|^2_{V_{(2)}})\right).
\end{align*}
As such, in view of the standing condition \eqref{eq:InitialDataBnd} (cf. Remark~\ref{rmk:SomeCrazyApproximation})
we conclude that
\begin{align}
	\lim_{N \uparrow \infty}\E \|\mathcal{E}^{D,1}_N\|_{L^2( 0, T ; V')}^{2}
	=\lim_{N \uparrow \infty}\E \|\mathcal{E}^{D,1}_N\|_{L^\infty( 0, T ; V')}^{2} =0.
	\label{eq:DetErrorDecalLinfH}
\end{align}
For $\mathcal{E}^{D,2}_N$ we estimate in $L^2( 0, T ; V')$
\begin{align*}
  \int_0^T\left\| \int_{t^{N_*^t-1}}^t \ell ds \right\|_{V'}^2 dt
\leq& \int_0^T \int_{t^{N_*^t-1}}^t \left\| \ell  \right\|_{V'}^2 ds (t- t^{N_*^t-1}) dt
= \sum_{k =1}^{N-1} \int_{t^{k-1}}^{t^k} \int_{t^{k-1}}^t \|\ell \|_{V'}^2 ds (t - t^{k-1})dt\\
\leq& c \deltaN^2  \int_{0}^{T} \| \ell \|_{V'}^{2} dt,
\end{align*}
and
\begin{align*}
  \int_0^T\left\|\ell^{N_*^t - 1}_N (t^{N_*^t} - t) \chi_{t > t^{1}}  \right\|_{V'}^2 dt
  =& \sum_{k =1}^{N-1}   \left\| \ell^{k}_N \right\|_{V'}^2  \int_{t^{k}}^{t^{k+1}}(t^{k+1} - t)^2   dt\\
  \leq& \frac{\deltaN}{3} \sum_{k =1}^N \left\| \int_{t^{k}}^{t^{k+1}}   \ell ds \right\|_{V'}^{2}
  \leq \frac{\deltaN^{2}}{3} \int_{0}^{T} \| \ell \|_{V'}^{2} dt.
\end{align*}
In summary we have
\begin{align}
	\lim_{N \uparrow \infty}\E \|\mathcal{E}^{D,2}_N\|_{L^2( 0, T ; V')}^{2} =0
	\label{eq:DetErrorAuxDecalL2H}
\end{align}
and so we conclude \eqref{eq:GoodConvL2HErrors} from \eqref{eq:DetErrorDecalLinfH} and \eqref{eq:DetErrorAuxDecalL2H}.

We next turn to make estimates for $\mathcal{E}^S_N$.    We begin with
estimates in  $L^2( 0, T ; H)$.  For $\mathcal{E}^{S,1}_N$ we observe with \eqref{eq:SublinearCondH}
and \eqref{eq:SigmaApproxCond2}
(cf. \eqref{eq:ItoTypeInequality}) that
\begin{align*}
  \E \int_0^T |\mathcal{E}^{S,1}_N|^2 dt
  =& \sum_{k =1}^{N-1} \E  \left|  \sigma_N(U^{k-1}_{N}) \frac{\eta^{k}_{N}}{\deltaN} \right|^2 \int_{t^k}^{t^{k+1}} (t^{k+1} - t)^2 dt
  =  \frac{\deltaN}{3} \sum_{k =1}^{N-1}\E  \left|  \sigma_N(U^{k-1}_{N}) \eta^{k}_{N} \right|^2 \notag\\
  =&  \frac{\deltaN}{3} \sum_{k =1}^{N-1}\E  \left|  \sigma_N(U^{k-1}_{N}) \right|^2_{L^{2}(\mathfrak{U}, H)} \deltaN
  \leq c\deltaN \sum_{k =1}^{N-1}\E  (1+ \left| U^{k-1}_{N} \right|^2) \deltaN,
\end{align*}
and infer from \eqref{eq:UniformBndConclusion} in Proposition~\ref{thm:UniformBnds} that
\begin{align}
\lim_{N \uparrow \infty}\E \|\mathcal{E}^{S,1}_N\|_{L^2( 0, T ; H)}^{2} =0.
	\label{eq:StocError1L2H}
\end{align}
On the other hand, with the It\=o isometry and another application of \eqref{eq:SublinearCondH} and \eqref{eq:SigmaApproxCond2}
we have
\begin{align*}
   \E \int_0^T |\mathcal{E}^{S,2}_N|^2 dt
  =& \sum_{k =1}^{N-1} \E  \int_{t^k}^{t^{k+1}} \left|  \int_{t^{k}}^{t} \sigma_N(U_{N}) dW \right|^2 dt
  =\sum_{k =1}^{N-1} \E  \int_{t^k}^{t^{k+1}}  \int_{t^{k}}^{t}  \left|\sigma_N(U_{N}) \right|^2_{L^{2}(\mathfrak{U}, H)} ds dt
  \notag\\
  =&\sum_{k =1}^{N-1} \E   \left|\sigma_N(U_{N}^{k-1}) \right|^2_{L^{2}(\mathfrak{U}, H)}  \int_{t^k}^{t^{k+1}} (t - t_k) dt
  \leq c\deltaN \sum_{k =1}^{N-1} \E   (1+ \left|U_{N}^{k-1}\right|^2) \deltaN,
\end{align*}
so that
\begin{align}
\lim_{N \uparrow \infty}\E \|\mathcal{E}^{S,2}_N\|_{L^2( 0, T ; H)}^{2} =0.
	\label{eq:StocError2L2H}
\end{align}
By combining now  \eqref{eq:StocError1L2H} and \eqref{eq:StocError2L2H}
we obtain \eqref{eq:GoodConvL2HErrorsStochastic}.

We turn now to establishing the
uniform bounds announced in \eqref{eq:UniformBndEStoch}.
Estimates similar to those leading to \eqref{eq:StocError1L2H}, \eqref{eq:StocError2L2H} but
which instead make use of the condition \eqref{eq:SigmaApproxCond1} yield bounds in $L^2( 0, T ;V)$ namely,
\begin{align*}
  \E \int_0^T \|\mathcal{E}^{S,1}_N\|^2 dt
  =\frac{\deltaN}{3} \sum_{k =1}^{N-1}\E  \left\|  \sigma_{N}(U^{k-1}_{N}) \right\|^2_{L^{2}(\mathfrak{U}, V)} \deltaN
  \leq \frac{T}{3} \sum_{k =1}^{N-1}\E  \left|  \sigma(U^{k-1}_{N}) \right|^2_{L^{2}(\mathfrak{U}, V)} \deltaN
  \leq c \sum_{k =1}^{N-1}\E  (1+ \left| U^{k-1}_{N} \right|^2) \deltaN,
\end{align*}
and similarly
\begin{align*}
   \E \int_0^T \|\mathcal{E}^{S,2}_N\|^2 dt
  =&\sum_{k =1}^{N-1} \E   \left\|\sigma_{N}(U_{N}^{k-1}) \right\|^2_{L^{2}(\mathfrak{U}, V)}  \int_{t^k}^{t^{k+1}} (t - t_k) dt
  \leq c\sum_{k =1}^{N-1} \E   (1+ \left|U_{N}^{k-1}\right|^2) \deltaN,
\end{align*}
so that, taken together we infer that:
\begin{align}
	\sup_{N \geq N_{1}}\E \|\mathcal{E}^{S}_N\|_{L^2( 0, T ; V)} < \infty.
	\label{eq:StocErrorL2V}
\end{align}
Finally we supply a bound for $\mathcal{E}^{S}_N$ in $L^\infty( 0, T ;H)$.  For
$\mathcal{E}^{S,1}_N$ we observe with \eqref{eq:SublinearCondH}, \eqref{eq:SigmaApproxCond2} that
\begin{align*}
  \E \sup_{t \in [0,T]} |\mathcal{E}^{S,1}_N|^2
  \leq \sum_{k=1}^{N-1}  \E \sup_{t \in [t^k,t^{k+1}]}  |\mathcal{E}^{S,1}_N|^2
  \leq \sum_{k=1}^{N-1}  \E  |\sigma_N(U^{k-1}_{N}) \eta^{k}|^2
  \leq  c\sum_{k=1}^{N-1}  \E  (1 +|U^{k-1}_{N}|^2) \deltaN.
\end{align*}
To estimate $\mathcal{E}^{S,2}_N$ we use Doob's inequality and \eqref{eq:SublinearCondH} to infer
\begin{align*}
  \E \sup_{t \in [0,T]} |\mathcal{E}^{S,2}_N|^2
  \leq& \sum_{k=1}^{N-1}  \E \sup_{t \in [t^k,t^{k+1}]}  |\mathcal{E}^{S,2}_N|^2
  = \sum_{k=1}^{N-1}  \E \sup_{t \in [t^k,t^{k+1}]}  \left|\int_{t^{k}}^{t} \sigma_N(U_{N}) dW\right|^2
  \notag\\
  \leq&  \sum_{k=1}^{N-1}  \E \int_{t^{k}}^{t^{k+1}} \left|\sigma_N(U_{N}) \right|^2_{L^{2}(\mathfrak{U}, H)} ds
  \leq c \sum_{k=1}^{N-1}  \E  (1 +|U^{k-1}_{N}|^2) \deltaN.
\end{align*}
With these bounds and \eqref{eq:UniformBndConclusion} we conclude that
\begin{align}
	\sup_{N \geq N_{1}}\E \|\mathcal{E}^{S}_N\|_{L^\infty( 0, T ; H)}^{2} < \infty.
	\label{eq:StocErrorLinfH}
\end{align}
In turn, \eqref{eq:StocErrorL2V}, \eqref{eq:StocErrorLinfH} directly imply \eqref{eq:UniformBndEStoch}
and so the proof of Proposition~\ref{prop:BasicPropertiesOfConProcesses} is now complete.

\section{Compactness and The Passage to the Limit}
\label{sec:TightnessAndLimit}

In this section we detail the compactness arguments that
we use to prove the existence of Martingale solutions of
\eqref{eq:PEsAbstractFormulation} using the processes
$U_{N}$ and $\bar{U}_{N}$ defined in the previous section.  As it is not
clear how to obtain compactness directly from $\bar{U}_{N}$,
(cf. Remark~\ref{rmk:errorsAreABitch}) we must introduce further
processes  to achieve this end.

Recalling \eqref{eq:StepProcessesN}, \eqref{eq:InterpProcessN},
\eqref{eq:ErrorTerm1}, \eqref{eq:ErrorTerm2} we define
\begin{align}
	U^*_{N} = \bar{U}_{N} - \mathcal{E}^S_{N},\quad
	U^{**}_{N} =U^{*}_{N} - \mathcal{E}^{D}_{N},
	\label{eq:additionalStaredProcesses}
\end{align}
and then consider the associated probability measures
\begin{align}
	\mu_{N}(\cdot) := \Prb( U_{N} \in \cdot),
	\quad
	\mu^*_{N}(\cdot) := \Prb( U^*_{N} \in \cdot),
	\quad
	\mu^{**}_{N}(\cdot) := \Prb( U^{**}_{N} \in \cdot).
	\label{eq:BorelMeasuresForUUstarUstarstar}
\end{align}
Notice that, due to Proposition~\ref{prop:BasicPropertiesOfConProcesses},
$\mu_N, \mu_N^*$ are defined on the space $\mathcal{X} := L^{2}( 0, T ;H)$.
Regarding the elements $\mu^{**}_N$ we observe that,
as a consequence of \eqref{eq:InEQPlusError}
\begin{align}
  U^{**}_{N}(t) = U_{N}^{0} + \int_{0}^{t} (\mathcal{N}(U_{N}) + \ell) dt + \int_{0}^{t} \sigma_{N}(U_{N}) dW.
  \label{eq:ustaruRelationship}
\end{align}
As a result of this identity and Proposition~\ref{prop:BasicPropertiesOfConProcesses}, the elements
$\mu_{N}^{**}$ may be regarded as  measures on the
space $\mathcal{Y} := L^{2}( 0, T ;V') \cap \mathcal C([0,T]; V_{(3)}')$.

We will show below that  $\mu_{N}$ and $\mu_{N}^{**}$ converge weakly to a common measure
$\mu$ and then make careful usage of the Skorohod embedding theorem to pass to the limit in
\eqref{eq:ustaruRelationship} on a new stochastic basis.  The former compactness arguments, which
rely on the intermediate measures $\mu^{*}_{N}$, will be carried out in the next subsection and the
details of the Skorohod embedding will be discussed in Subsection~\ref{sec:SBPassagetoTheLimit}
further on.

\subsection{Tightness Arguments}
\label{sec:CompArgs}

In this section we will establish the following compactness properties of the $\{\mu_{N}\}_{N \geq N_{1}}$
and $\{\mu_{N}^{**}\}_{N \geq N_{1}}$
\begin{Prop}\label{prop:Tightness}
The assumptions are precisely those in Proposition~\ref{prop:BasicPropertiesOfConProcesses}.
Define $\{U_{N}\}_{N \geq N_{1}}$ and $\{U_{N}^{**}\}_{N \geq N_{1}}$ according to \eqref{eq:StepProcessesN}
and \eqref{eq:additionalStaredProcesses} and where $N_{1}$ is as in \eqref{eq:BndsStartingDiscretization}.
Let $\{\mu_{N}\}_{N \geq N_{1}}$, $\{\mu_{N}^{**}\}_{N\geq N_{1}}$
be the associated Borel measures on
$$
 \mathcal{X} := L^{2}( 0, T ;H), \quad  \mathcal{Y} := L^{2}( 0, T ;V') \cap \mathcal C([0,T]; V_{(3)}'),
$$
defined according to \eqref{eq:BorelMeasuresForUUstarUstarstar}.
Then, there exists a Borel measure $\mu$ on $L^{2}( 0, T ;H) \cap \mathcal C([0,T]; V_{(3)}')$
such that, up to a subsequence\footnote{We recall the notion of weak compactness
of probability measures along with the equivalent notion of \emph{tightness}
in the Appendix, Section~\ref{sec:LetsConvergeToaMeasure}
below.}
\begin{align}
	\mu_{N} \rightharpoonup \mu, \quad\textrm{(weakly) on } \mathcal{X},
	\label{eq:MuNConvL2H}
\end{align}
and
\begin{align}
	\mu_{N}^{**} \rightharpoonup  \mu, \quad \textrm{(weakly) on } \mathcal{Y}.
	\label{eq:MuNstarstarConvCV3}
\end{align}
\end{Prop}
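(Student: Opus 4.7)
The plan is to establish tightness of the three families $\{\mu_{N}\}$, $\{\mu_{N}^{*}\}$ and $\{\mu_{N}^{**}\}$ on their appropriate path spaces, extract weakly convergent subsequences via Prokhorov's theorem, and then identify all three limits as a single measure $\mu$ on $L^{2}(0,T;H) \cap \mathcal{C}([0,T]; V_{(3)}')$ by exploiting the vanishing of the differences $U_{N} - U_{N}^{*}$ and $U_{N}^{*} - U_{N}^{**}$ in $L^{2}(\Omega; L^{2}(0,T; V'))$.

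The cleanest case is $\{\mu_{N}^{**}\}$, because of the stochastic integral equation \eqref{eq:ustaruRelationship}. Splitting $U_{N}^{**}$ into its drift and stochastic parts, the drift $\int_{0}^{t}(\mathcal{N}(U_{N}) + \ell)\,ds$ is absolutely continuous into $V_{(2)}'$; the size estimate \eqref{eq:BSizeV2Prime}, the continuity of $A$ and $E$, the sublinearity \eqref{eq:SublinearConditionsOnSDTerm}, together with Proposition~\ref{thm:UniformBnds}, supply a uniform $L^{2}(\Omega; W^{1,2}(0,T; V_{(2)}'))$ bound. For the martingale part $\int_{0}^{t}\sigma_{N}(U_{N})\,dW$, a classical Flandoli-Gatarek/Kolmogorov estimate based on \eqref{eq:SigmaApproxCond2} and the bounds on $U_{N}$ yields uniform boundedness in $L^{2}(\Omega; W^{\alpha,2}(0,T; H))$ for each $\alpha \in [0, 1/2)$. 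Since $U_{N}^{**} + \mathcal{E}_{N}^{D} = \bar{U}_{N} - \mathcal{E}_{N}^{S}$ is bounded in $L^{2}(\Omega; L^{2}(0,T; V))$ by Proposition~\ref{prop:BasicPropertiesOfConProcesses} while $\mathcal{E}_{N}^{D}$ is small in $L^{2}(0,T; V')$, the Aubin-Lions-Simon lemma with $H \Subset V' \subset V_{(2)}'$ delivers tightness of $\{\mu_{N}^{**}\}$ on $L^{2}(0,T;V')$, and Arzelà-Ascoli applied with the corresponding Hölder bound in $V_{(3)}'$ gives tightness on $\mathcal{C}([0,T]; V_{(3)}')$.

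To handle $\{\mu_{N}^{*}\}$ on $\mathcal{X} = L^{2}(0,T;H)$, write $U_{N}^{*} = U_{N}^{**} + \mathcal{E}_{N}^{D}$. Proposition~\ref{prop:BasicPropertiesOfConProcesses} supplies a uniform $L^{2}(\Omega; L^{2}(0,T;V))$ bound, and the fractional time regularity of $U_{N}^{**}$ from the previous step, together with the explicit piecewise affine structure of $\mathcal{E}_{N}^{D}$ in $V'$ (cf. \eqref{eq:ErrorTerm1}) whose norms are controlled via \eqref{eq:InitialDataBnd}, yields a uniform bound in $L^{2}(\Omega; W^{\alpha,2}(0,T; V_{(2)}'))$ for some $\alpha > 0$. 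Aubin-Lions-Simon with $V \Subset H \subset V_{(2)}'$ then gives tightness of $\{\mu_{N}^{*}\}$ on $L^{2}(0,T;H)$. Tightness of $\{\mu_{N}\}$ on $\mathcal{X}$ now follows immediately, since $U_{N} - U_{N}^{*} = (U_{N} - \bar{U}_{N}) + \mathcal{E}_{N}^{S} \to 0$ in $L^{2}(\Omega; L^{2}(0,T;H))$ by \eqref{eq:ConvTogetherUUbar} and \eqref{eq:GoodConvL2HErrorsStochastic}.

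To identify the limits, I would pass along a common subsequence so that all three families converge weakly, and then consider the joint law of $(U_{N}, U_{N}^{**})$ on $\mathcal{X} \times \mathcal{Y}$. This joint law is tight by what precedes, so along a further subsequence it converges weakly to some $\rho$ whose marginals are the individual weak limits. Since $U_{N} - U_{N}^{**} = (U_{N} - U_{N}^{*}) - \mathcal{E}_{N}^{D} \to 0$ in $L^{2}(\Omega; L^{2}(0,T; V'))$, the measure $\rho$ is concentrated on the diagonal of $L^{2}(0,T; V') \times L^{2}(0,T; V')$; both marginals therefore agree on $L^{2}(0,T;V')$, and being supported respectively in $\mathcal{X}$ and $\mathcal{Y}$ they necessarily arise from a single measure $\mu$ on $\mathcal{X} \cap \mathcal{Y}$, as required. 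The main obstacle I anticipate is obtaining the fractional Sobolev estimate on $\int_{0}^{t} \sigma_{N}(U_{N})\,dW$ uniformly in $N$, together with the careful bookkeeping needed to absorb the discretization errors $\mathcal{E}_{N}^{D}$ (which only live in $V'$, not $H$) into the Aubin-Lions-Simon framework without losing the spatial compactness gain.
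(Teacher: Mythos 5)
Your overall architecture --- tightness of $\{\mu_N^{**}\}$ via the stochastic equation \eqref{eq:ustaruRelationship}, tightness of $\{\mu_N^*\}$ via Aubin--Lions, transferring to $\{\mu_N\}$ by the vanishing differences, and finally identifying the three limits through the diagonal of a joint law (equivalently, Lemma~\ref{thm:ConvTogether}) --- is the same as the paper's, though the paper runs $\mu_N^*$ first and then $\mu_N^{**}$. However, two of the intermediate bounds you announce do not hold, and they are where the real work in the paper lies.

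First, the $W^{1,2}(0,T;V_{(2)}')$ bound for the drift is wrong: $\|B(U_N)\|_{V_{(2)}'} \le c\,|U_N|^{1/2}\|U_N\|^{3/2}$ together with the energy bounds $L^\infty_t(H)\cap L^2_t(V)$ only places $B(U_N)$ in $L^{4/3}_t(V_{(2)}')$, not $L^2_t(V_{(2)}')$; one must work in $W^{1,4/3}(0,T;V_{(2)}')$ as in \eqref{eq:FlandoliGatImbeddings}. Second, and more seriously, the claimed $L^2(\Omega;W^{\alpha,2}(0,T;H))$ bound on the stochastic integral with $\alpha\in[0,1/2)$ does \emph{not} yield compactness into $\mathcal{C}([0,T];V_{(3)}')$: the Flandoli--Gatarek/Arzel\`a--Ascoli embedding $W^{\alpha,p}(0,T;V_{(2)}')\subset\subset\mathcal{C}([0,T];V_{(3)}')$ requires $\alpha p>1$, which is incompatible with $p=2$ and $\alpha<1/2$. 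To use $p>2$ one needs $\E\int_0^T|\sigma_N(U_N)|_{L_2(\mathfrak{U},H)}^p\,dt$, hence $L^p(\Omega)$-moments of $\sup_t|U_N|$ for $p>2$ --- and these are not available under \eqref{eq:UniformBndConclusion}, which gives only $L^2(\Omega)$-control. The paper circumvents this precisely via stopping-time localization and the Chebyshev splitting \eqref{eq:StochasticChebEstimateforCV3primeTightness1}--\eqref{eq:StochasticChebEstimateforCV3primeTightness2}. A related issue appears already in your Aubin--Lions step: the quantities that control the modulus of continuity are products like $\sup_t(1+|U_N|^2)\cdot\int_0^T\|U_N\|^2dt$, whose expectation is not finite given only second moments; the paper handles this with the multiplicative event-splitting of \eqref{eq:tightnessNoMomenttrick1}. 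Without these devices --- or alternatively without establishing fourth-moment bounds analogous to \eqref{eq:LetgetHigh_er_moments} beforehand, which the paper deliberately avoids (Remark~\ref{rmk:LetsBragAboutAvoidingMoments}) --- the tightness estimates do not close.
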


The rest of this subsection is devoted to the proof of Proposition~\ref{prop:Tightness}.  We
proceed as follows: First we show that $\{\mu_{N}^{*}\}_{N \geq N_{1}}$ is tight (cf. Appendix~\ref{sec:LetsConvergeToaMeasure}) in
$L^{2}( 0, T ;H)$ by employing a suitable variant of the Aubin-Lions compactness theorem
which we establish in Proposition~\ref{thm:MotherOfAllCompEmbeddings} below.
We next show that $\{\mu_{N}^{**}\}_{N \geq N_{1}}$ is tight in
$ \mathcal C([0,T];V_{(3)}')$ via an Arzel\'a-Ascoli type compact embedding from \cite{FlandoliGatarek1} and \cite{Temam4}.
We finally employ the estimates
\eqref{eq:ConvTogetherUUbar}, \eqref{eq:GoodConvL2HErrors}
along with the general convergence results recalled in Lemma~\ref{thm:ConvTogether}
to finally infer \eqref{eq:MuNConvL2H} and \eqref{eq:MuNstarstarConvCV3}.

\subsubsection{Tightness for $\mu_{N}^{*}$ in $L^{2}(0,T; H)$}

With the aid of Proposition~\ref{thm:MotherOfAllCompEmbeddings}
we identify some compact subsets of $\mathcal{X} = L^{2}( 0, T ;H)$ that, in conjunction with
suitable estimates (see \eqref{eq:SplitOfTDTypeEstimateIntoDetandStochParts}--\eqref{eq:StocPortDerivatPreEst}
immediately below) are used to establish the tightness of $\{ \mu^*_N\}_{N \geq N_{1}}$
in $\mathcal{X}$.
For $U \in \mathcal{X}$, $n > 0$, define
\begin{align}
[U]_{j} :=
     \left( j \sup_{ 0 \leq \theta \leq j^{-{6}}} \int_{0}^{T- \theta} \|U(t+\theta) - U(t) \|_{V_{(2)}'}^{4/3} dt \right)^{3/4},
     \label{eq:CompSpSemiNorm}
\end{align}
and, for each $R > 0$, consider
\begin{align}
  B_R := \left\{ U \in \mathcal{X}:  \|U\|_{L^2( 0, T ,V)}
	+\|U\|_{L^\infty ( 0, T , H)}
	+  \sup_{j \geq 1} [U]_{j} \leq R \right\}.
\end{align}
It is not hard to show that each set $B_R$ is a closed subset of $\mathcal{X}$.
\homework{
Suppose that $U_N \rightarrow U$ in $\mathcal{X} = L^2(0,T;H)$ with
$U_N \in B_R$.   As in, e.g. \cite[Section 7.1]{DebusscheGlattTemam1}
we have that
\begin{align*}
    \|U\|_{L^2( 0, T ,V)}  +\|U\|_{L^\infty ( 0, T , H)}  \leq R.
\end{align*}
Take
\begin{align*}
\| \cdot \|_n := \|\cdot\|_{L^2( 0, T ,V)}  +\|\cdot\|_{L^\infty ( 0, T , H)} + [\cdot]_n
\end{align*}
We would like to show that, for each $\epsilon > 0$,   $\|U\|_n \leq R + \epsilon$
for all $n$ so that $U \in B_R$.  Observe that
\begin{align*}
[U- U_N]_n  \leq 2 n \|U- U_N\|_{L^{4/3}( 0, T ;V_{(2)}')} \leq c(n, T) \|U- U_N\|_{L^{2}( 0, T ;H)}
\end{align*}
Thus, for every $N$ sufficiently large,
\begin{align*}
  \| U \|_n \leq \|U- U_N\|_n + \|U_N\|_n \leq \epsilon + R,
\end{align*}
as desired.
}Perusing \eqref{eq:CompSpSemiNorm} it is clear that the condition  \eqref{eq:UniformTmShiftConvFixedT}
holds  uniformly for elements in $B_{R}$. Thus, as a consequence of Proposition~\ref{thm:MotherOfAllCompEmbeddings}, (ii)
these sets $B_R$ are compact in $\mathcal{X} = L^2(0,T;H)$ for each $R > 0$.

Now, for each $R >0$, we have:
\begin{align}
	\mu^*_N( B_R^c)
	\leq&
	\Prb \left( \|U^*_N\|_{L^2( 0, T ,V)}
	+\|U^*_N\|_{L^\infty ( 0, T , H)}  > R/2\right)
	+\Prb \left( \sup_{j \geq 1} [U^*_N]_{j} > R/2 \right).
	\label{eq:TightnessSplit1}
\end{align}
As a consequence of \eqref{eq:UniformBoundAssertionContProcesses},
\eqref{eq:UniformBndEStoch} and \eqref{eq:additionalStaredProcesses} we have
\begin{align}
	\Prb \bigl( \|U^*_N\|_{L^2( 0, T ,V)}  + \|U^*_N\|_{L^\infty ( 0, T , H)}  > R/2 \bigr)
	\leq \frac{c}{R^2},
	\label{eq:BasicBndsTightness}
\end{align}
for some constant $c$ independent of $N$.

Next we need to establish suitable uniform estimates for $\sup_{j \geq 1} [U^*_N]_{j}$
(cf. \eqref{eq:CompSpSemiNorm}).
To this end we observe with \eqref{eq:ApproxEvolutionComputation}
and \eqref{eq:additionalStaredProcesses} that for any $\theta > 0$,
\begin{align}   \label{eq:SplitOfTDTypeEstimateIntoDetandStochParts}
\int_{0}^{T-\theta} \|U^*_N(t+\theta) - U^*_N(t) \|_{V_{(2)}^{'}}^{4/3} dt
\leq     I_{N}^{D}(\theta) + I_{N}^{S}(\theta),\end{align}
with
\begin{equation*}
I_{N}^{D}(\theta) = c \int_{0}^{T-\theta} \left\| \int_{t}^{t+ \theta} \sum_{n=1}^{N-1}  (\mathcal{N}(U^{n}_{N}) + \ell^n_N)\chi_{(t^{n}, t^{n+1})}(s) ds
 \right\|_{V_{(2)}^{'}}^{4/3} dt,
 \notag
 \end{equation*}
 \begin{equation*}
  I_{N}^{S}(\theta) = c\int_{0}^{T-\theta} \left\|
		        \int_{t}^{t + \theta} \sigma_N(U_{N}) dW \right\|_{V_{(2)}^{'}}^{4/3} dt
		        \notag.
\end{equation*}
To address $I_{N}^{D}(\theta)$ we observe, with \eqref{eq:BSizeV2Prime}
and the standing assumptions on the operators that make up $\mathcal{N}$
in \eqref{eq:TheDriftPartTheWholePotatoe}, that  for any $U \in V$,
\begin{align}
  \|\mathcal{N}(U)\|_{V_{(2)}'}^{4/3} \leq c(|U|^{2/3} + 1)(\|U\|^{2}+1).
  \label{eq:NonlinearTermEstimateTheWholeDeal}
\end{align}
Furthermore it is clear from \eqref{eq:EllIncrement} and H\"older's inequality that,  a.s.
\begin{align*}
	\int_{0}^{T} \sum_{n=1}^{N-1} \| \ell^n_N\|_{V_{(2)}^{'}}^{4/3}\chi_{(t^{n}, t^{n+1})}(s) dt
	\leq& \int_{0}^{T} \sum_{n=1}^{N-1} \left( \frac{1}{\deltaN} \int_{(n-1)\deltaN}^{n\deltaN} \| \ell\|_{V_{(2)}^{'}}^{4/3} ds \right) \chi_{(t^{n}, t^{n+1})}(s) dt
	\leq \int_{0}^{T}  \| \ell\|_{V_{(2)}^{'}}^{4/3} dt.
\end{align*}
Combining these observations we infer that, a.s.
\begin{align}
   I_{N}^{D}(\theta) \leq&
   c \theta^{1/3} \int_{0}^{T-\theta}
   \int_{t}^{t+ \theta} \sum_{n=1}^{N-1} \| \mathcal{N}(U^{n}_{N}) + \ell^n_N\|_{V_{(2)}^{'}}^{4/3}\chi_{(t^{n}, t^{n+1})}(s) ds  dt
   \notag\\
   \leq& c \theta^{1/3} \int_{0}^{T}  \sum_{n=1}^{N-1} \left( (|U^{n}_{N}|^{2/3} + 1)(\|U^{n}_{N}\|^{2} +1 )
   	+ \| \ell\|_{V_{(2)}^{'}}^{4/3}\right)\chi_{(t^{n}, t^{n+1})}(s)  ds
   \notag\\
   \leq& c \theta^{1/3} \left(\max_{0 \leq l \leq N}(1+  |U^l_N|^{2/3})\sum_{j = 1}^{N}  \deltaN (\|U^j_N\|^2 +1 )
   +  \int_{0}^{T}  \| \ell\|_{V_{(2)}^{'}}^{4/3} dt\right)
   \notag\\
   \leq& c \theta^{1/3} \left( \max_{0 \leq l \leq N}(1+  |U^l_N|^{2}) \sum_{j = 1}^{N}  \deltaN (\|U^j_N\|^2 +1) +  \int_{0}^{T} (1+ \| \ell\|_{V^{'}}^{2} )dt\right).
   \label{eq:DetPortDerivatPreEst}
\end{align}
For the term $I_{N}^{S}$ we estimate, for $0 \leq \theta \leq \delta$,
\begin{align}
   \E \left( \sup_{0 \leq \theta \leq  \delta} I_{N}^{S}(\theta)\right)
   \leq& c\int_{0}^{T}  \left( \E  \sup_{0 \leq \theta \leq  \delta} \left\|
		        \int_{t}^{(t + \theta)\wedge T } \sigma_N(U_{N}) dW \right\|_{V_{(2)}^{'}}^{2}   \right)^{2/3} dt
		       \notag \\
   \leq& c \delta^{2/3} \E \sup_{t \in [0,T]} (1 + |U_{N}|^2),		
      \label{eq:StocPortDerivatPreEst}
\end{align}
where the second line follows from Doob's inequality and the standing
assumptions \eqref{eq:SublinearCondH} on $\sigma$  and \eqref{eq:SigmaApproxCond2} on $\sigma_N$:
\begin{align*}
 \E  \sup_{0 \leq \theta \leq  \delta} \left\|
		        \int_{t}^{(t + \theta) \wedge T} \sigma_N(U_{N}) dW \right\|_{V_{(2)}^{'}}^{2}
	\leq  c \E \int_{t}^{(t + \delta) \wedge T} \|\sigma_N(U_{N})\|_{V_{(2)}'}^2 ds
	\leq  c \delta  \E \sup_{t \in [0,T]} (1 + |U_{N}|^2).
\end{align*}

The estimates \eqref{eq:DetPortDerivatPreEst}, \eqref{eq:StocPortDerivatPreEst}
allow the second term
in \eqref{eq:TightnessSplit1} to be treated as follows. Observe that according to
\eqref{eq:CompSpSemiNorm}, \eqref{eq:SplitOfTDTypeEstimateIntoDetandStochParts}
we have
\begin{align*}
	\sup_{j \geq 1} [U^*_N]_{j}^{4/3}
	\leq \sup_{j \geq 1}
	\left(j \sup_{ |\theta| \leq j^{-6}}I_{N}^{D}(\theta)\right) +
	\sup_{j \geq 1}
    \left(j\sup_{ |\theta| \leq j^{-6}}I_{N}^{S}(\theta) \right).
\end{align*}
For the first term we observe with \eqref{eq:DetPortDerivatPreEst}  that
\begin{align}
    \sup_{j \geq 1}  \left(j \sup_{ |\theta| \leq j^{-6}}I_{N}^{D}(\theta)\right)  \leq&
      c \left( \max_{0 \leq l \leq N}(1+  |U^l_N|^{2})
            \sum_{r = 1}^{N}  \deltaN (\|U^r_N\|^2 +1)
             +  \int_{0}^{T}  (\| \ell\|_{V^{'}}^{2} +1) dt \right)
       \notag\\
      :=& c ( T_1^N T_2^N + T_3^N).
      \label{eq:TheCrazySplitUp}
\end{align}
Regarding the second term we simply bound
\begin{align*}
	\sup_{j \geq 1}
	    \left(j \sup_{ |\theta| \leq j^{-6}}I_{N}^{S}(\theta)\right)
	    \leq \sum_{j \geq 1} j \sup_{ |\theta| \leq j^{-6}}I_{N}^{S}(\theta)
	    := T_4^N
\end{align*}
so that for $\rho > 0$, sufficiently large,
\begin{align}
  \Prb \left( \sup_{j \geq 1} [U^*_N]_{j}^{4/3} > \rho \right)
	\leq &
	\Prb ( c(T_1^N T_2^N + T_3^N) + T_4^N > \rho)
	\leq
	\Prb ( cT_1^N T_2^N > \rho/2 ) +
	\Prb (cT_3^N +T_4^N > \rho/2) \notag\\
	\leq &	\Prb \left(  \left\{T_1^N > \sqrt{\rho/(2c)} \right\} \cup \left\{T_2^N > \sqrt{\rho/(2c)} \right\} \right) +
	\Prb (cT_3^N +T_4^N > \rho/2) \notag\\
	\leq &\Prb \left(  T_1^N > \sqrt{\rho/(2c)}\right)  + \Prb\left(T_2^N > \sqrt{\rho/(2c)} \right) +
	\Prb (cT_3^N +T_4^N > \rho/2) \notag\\
	\leq& \frac{c}{\sqrt{\rho}}  \E (T_1^N + T_2^N + T_3^N + T_4^N).
	\label{eq:tightnessNoMomenttrick1}
\end{align}
In view of the uniform bound \eqref{eq:UniformBndConclusion}
established in Proposition~\ref{thm:UniformBnds},
$\sup_N \E T_1^N$ and $\sup_N \E T_2^N$ are both finite.  The term $\sup_N\E T_3^N$, which is
independent of $N$, is finite due to the standing assumption on
$\ell$ (cf. \eqref{eq:DataMomentAssumptions}). For $T_4^N$ we refer
back to \eqref{eq:StocPortDerivatPreEst} and apply the monotone
convergence theorem to infer:
\begin{align*}
  \E T_4^N \leq \sum_{n \geq 1} n^{-3} \E \sup_{t \in [0,T]} (1 + |U_{N}|^2) < \infty.
\end{align*}
We finally conclude that
\begin{align}
 \Prb \left( \sup_{j \geq 1} [U^*_N]_{j} > R/2\right) =
 \Prb \left( \sup_{j \geq 1} [U^*_N]_{j}^{4/3} > (R/2)^{4/3}\right)
 \leq \frac{c}{R^{2/3}}.
 \label{eq:FupFinalTightnessTerm}
\end{align}
Combining \eqref{eq:TightnessSplit1}, \eqref{eq:BasicBndsTightness}
and \eqref{eq:FupFinalTightnessTerm} we now conclude that (cf. Appendix~\ref{sec:LetsConvergeToaMeasure})
\begin{align}
\{\mu^*_N\}_{N \geq 1} \textrm{ is tight in } \mathcal{X} = L^2( 0, T ;H).
\label{eq:tightnessInX}
\end{align}

\subsubsection{Tightness for $\mu^{**}_{N}$ in $ \mathcal C([0,T];V_{(3)}')$}

We next show that $\mu_N^{**}$ is tight in $ \mathcal C([0,T],V_{(3)}')$.    For this purpose
we make appropriate usage of a compact embedding from \cite{FlandoliGatarek1} (see also \cite{Temam4}).
Let us fix any $p \in (2,\infty), \alpha \in (0,1/2)$ such that $\alpha p > 1$.  According
to \cite{FlandoliGatarek1}:
\begin{align}
W^{1,4/3}( 0, T ; V_{(2)}') \subset \subset \mathcal C([0,T]; V_{(3)}'),
\quad W^{\alpha,p}( 0, T ; V_{(2)}') \subset \subset \mathcal C([0,T]; V_{(3)}'),
\label{eq:FlandoliGatImbeddings}
\end{align}
that is, the embeddings are continuous and compact.
We now define
\begin{align*}
B_R :=& \left\{ X \in \mathcal C([0,T]; V_{(3)}'): \|X\|_{W^{1,4/3}( 0, T ; V_{(2)}')} \leq R\right\}
            +\left\{ Y \in \mathcal C([0,T]; V_{(3)}'): \|Y\|_{W^{\alpha,p}( 0, T ; V_{(2)}')} \leq R\right\} \\
            :=& B_{R}^{D} + B_{R}^{S}
\end{align*}
for any $R > 0$.  With \eqref{eq:FlandoliGatImbeddings}, it is clear that $B_{R}$ is compact
in $ \mathcal C([0,T]; V_{(3)}')$ for every $R>0$.
Observe moreover that, in view of \eqref{eq:ustaruRelationship}
\begin{align*}
  \{U_{N}^{**} \in B_{R}\} \supseteq
  \left\{ U^{0}_{N}  + \int_{0}^{\cdot} (\mathcal{N}(U_{N}) + \ell)ds  \in B_{R}^{D}\right\}
  \cap \left\{ \int_{0}^{\cdot} \sigma_N(U_{N}) dW \in B_{R}^{S} \right\},
\end{align*}
and thus that
\begin{align}
	\mu_{N}^{**}(B_{R}^{C})
	\leq& \Prb\left( \left\|   U^{0}_{N}  + \int_{0}^{\cdot} (\mathcal{N}(U_{N}) + \ell)ds
		      \right\|_{W^{1,4/3}( 0, T ; V_{(2)}')} > R \right)
		      +
	      \Prb\left( \left\| \int_{0}^{\cdot} \sigma_N(U_{N}) dW\right\|_{W^{\alpha,p}( 0, T ; V_{(2)}')} > R
	      \right)
	      \notag\\
	      :=& S_{N}^R + T_{N}^R.
\end{align}
Hence we will infer that $\{\mu_N^{**}\}$ is tight in $ \mathcal C([0,T],V_{(3)}')$ if we can show that
 $T_N^R, S_N^R$ converge uniformly in $N$ to zero as $R \uparrow \infty$.

For $T_N^R$ we estimate, with \eqref{eq:NonlinearTermEstimateTheWholeDeal}
\begin{align*}
 &\left\|   U^{0}_{N}  + \int_{0}^{\cdot} (\mathcal{N}(U_{N}) + \ell)ds
		      \right\|_{W^{1,4/3}( 0, T ; V_{(2)}')}^{4/3}
		      \notag\\
	&\quad \quad \quad \quad
	\leq c(1+ |U_{N}^{0}|^{2})
	+ c \int_{0}^{T} \|\mathcal{N}(U_{N}) + \ell\|_{V_{(2)}'}^{4/3}dt
	\notag \\
	&\quad \quad \quad \quad
	\leq
	c (1+|U_{N}^{0}|^{2})
	+ c  \int_{0}^{T} \left( (|U_{N}|^{2/3} + 1)(\|U_{N}\|^{2}+1) + \|\ell\|_{V'}^{2}\right)dt,
        \notag\\
        &\quad \quad \quad \quad
        	\leq c\sup_{t \in [0,T]} (1+ |U_{N}|^{2}) \cdot \left(
	\int_{0}^{T} \left( 1 + \|U_{N}\|^{2} + \|\ell\|_{V'}^{2}\right)dt + 1\right).
\end{align*}
Thus we find, cf. \eqref{eq:tightnessNoMomenttrick1}:
\begin{align}
	 T_N^R
	 &\leq	 \Prb\left(c\sup_{t \in [0,T]} (1+ |U_{N}|^{2}) \cdot
	\left(\int_{0}^{T} \left( 1 + \|U_{N}\|^{2} + \|\ell\|_{V'}^{2}\right)dt + 1\right)
	> R \right)
	\notag\\
	&\leq
	\Prb\left(c\sup_{t \in [0,T]} (1+ |U_{N}|^{2}) > R^{1/2} \right) +\Prb\left(
	\int_{0}^{T} \left( 1 + \|U_{N}\|^{2} + \|\ell\|_{V'}^{2}\right)dt +1 > R^{1/2} \right)
	\notag\\
	  &\leq
	  \frac{c}{R^{1/2}} \E\sup_{t \in [0,T]} (1+ |U_{N}|^{2}) +  \frac{1}{R^{1/2}}
	\E \left(\int_{0}^{T} \left( 1 + \|U_{N}\|^{2} + \|\ell\|_{V'}^{2}\right)dt +1\right).
	 \label{eq:DetChebEstimateforCV3primeTightness1}
\end{align}	

We turn to $S_N^R$.  For this purpose let us define for any $R > 0$ the stopping times
\begin{align*}
	\tau_{R} := \inf_{t \geq 0} \left\{ \sup_{s \in [0,t]} |U_{N}| \geq R \right\} \wedge T
		     = \sup_{t \geq 0} \left\{ \sup_{s \in [0,t]} |U_{N}| < R \right\} \wedge T.
\end{align*}
Using $\tau_{R}$ we now estimate with the Chebyshev inequality that
\begin{align}
	      S_N^R \leq&   \Prb\left( \left\| \int_{0}^{\cdot \wedge \tau_{R}} \sigma_N(U_{N}) dW\right\|_{W^{\alpha,p}( 0, T ; V_{(2)}')} > R,
	      \tau_{R} \geq T
	      \right)
	      + \Prb( \tau_{R} < T) \notag\\
	      \leq&
	       \Prb\left( \left\| \int_{0}^{\cdot \wedge \tau_{R}} \sigma_N(U_{N}) dW\right\|_{W^{\alpha,p}( 0, T ; V_{(2)}')} > R
	      \right)
	      + \Prb \left( \sup_{s \in [0,T]} |U_{N}| \geq R \right)\notag\\
	      \leq&
	      \frac{1}{R^{p}}
	      \E \left(
                \left\| \int_{0}^{\cdot \wedge \tau_{R}} \sigma_N(U_{N}) dW\right\|_{W^{\alpha,p}( 0, T ; V_{(2)}')}^{p}
                \right)
                + \frac{1}{R^{2}} \E \sup_{s \in [0,T]} |U_{N}|^{2}.
                \label{eq:StochasticChebEstimateforCV3primeTightness1}
\end{align}
Now in order to treat this final stochastic integral term we recall the following
generalization of the Burkholder-Davis-Gundy inequality from e.g. \cite{FlandoliGatarek1}:
for a given Hilbert space $X$, $p \geq 2$  and $\alpha \in [0,1/2)$ we have
for all $X$-valued predictable $G \in L^{p}(\Omega; L^{p}_{loc}(0,
\infty,L_{2}(\mathfrak{U}, X)))$
\begin{align*}
  \E \left(
     \left\| \int_0^{\cdot} G dW \right\|_{W^{\alpha, p}( 0, T ;X)}^p \right)
     \leq c \E \left(
    \int_0^T |G|_{L_2(\mathfrak{U}, X)}^p dt \right),
\end{align*}
which holds with a constant $c$ depending only on $\alpha$ and $p$.   Continuing now from \eqref{eq:StochasticChebEstimateforCV3primeTightness1}
we have
\begin{align}
	S_N^R
               \leq& \frac{c}{R^{p}}  \E  \int_0^{T \wedge \tau_{R}} |\sigma_N(U_{N})|_{L_2(\mathfrak{U}, H)}^p dt
                + \frac{1}{R^{2}} \E \sup_{s \in [0,T]} |U_{N}|^{2}
                 \leq \frac{c}{R^{p}}  \E  \sup_{s \in [0,T \wedge \tau_{R}]} (1 + |U_{N}|^{p})
                + \frac{1}{R^{2}} \E \sup_{s \in [0,T]} |U_{N}|^{2}
                \notag\\
                \leq& \frac{c(1+R^{p-2})}{R^{p}}  \E  \sup_{s \in [0,T]} (1+ |U_{N}|^{2})
                + \frac{1}{R^{2}} \E \sup_{s \in [0,T]} |U_{N}|^{2}
                \leq\frac{c}{R^{2}} \E \sup_{s \in [0,T]} (1+|U_{N}|^{2}).
                \label{eq:StochasticChebEstimateforCV3primeTightness2}
\end{align}
Combining the estimates \eqref{eq:DetChebEstimateforCV3primeTightness1},
\eqref{eq:StochasticChebEstimateforCV3primeTightness2}
with \eqref{eq:UniformBoundAssertionContProcesses} we finally conclude
\begin{align*}
	\sup_{N \geq N_{1}} \mu^{**}_N (B_R) \geq 1 - \frac{c}{R^{1/2}}
\end{align*}
and hence infer
\begin{align}
\{\mu^{**}_N\}_{N \geq N_{1}} \textrm{ is tight in }\mathcal C([0,T]; V_{(3)}').
\label{eq:tightnessInY}
\end{align}

\begin{Rmk}\label{rmk:LetsBragAboutAvoidingMoments}
Let us observe that the tightness bounds for $\mu^{**}_N$
and $\mu^{*}_N$ could be carried out differently if we
had available, for example, the uniform bounds on `higher moments' like
\begin{align}
     \sup_{N \geq 1} \E \left( \max_{0 \leq k \leq N}|U^k_N|^{4} +
      \left(\sum_{k = 1}^{N}  \deltaN \|U^k_N\|^2 \right)^2 \right)
      < \infty
      \label{eq:LetgetHigh_er_moments_Discrete}
\end{align}
or equivalently that
\begin{align}
     \sup_{N \geq 1} \E \left( \sup_{t \in [0,T]} |U_N|^4 +
     \left(\int_0^T \|U_N\|^2 dt
     \right)^2 \right)
      < \infty.
      \label{eq:LetgetHigh_er_moments}
\end{align}
Indeed, in numerous other previous works related
to stochastic fluids equations (see e.g.
\cite{Bensoussan1,FlandoliGatarek1,MenaldiSritharan,
DebusscheGlattTemam1,GlattVicol2011})
estimates analogous to \eqref{eq:LetgetHigh_er_moments}
are established essentially via Ito's lemma in order to achieve
tightness in the probability laws associated to a regularization
scheme.

 In the current situation, instead due to the way we carry out the estimates
in \eqref{eq:tightnessNoMomenttrick1},
\eqref{eq:DetChebEstimateforCV3primeTightness1}
and
\eqref{eq:StochasticChebEstimateforCV3primeTightness1}--\eqref{eq:StochasticChebEstimateforCV3primeTightness2}, we have adopted a different approach, namely,    we   establish tightness (compactness)
estimates without recourse to such higher moment estimates.

A different method using higher moments will be shown in   the related work  \cite{GlattTemam2011}.

\end{Rmk}

\subsubsection{Cauchy Arguments and Conclusions}
\label{cauchy}

With \eqref{eq:tightnessInX} and \eqref{eq:tightnessInY} now in hand it is then simply a matter of
collecting the various convergences above to complete the proof of Proposition~\ref{prop:Tightness}

By making use of Prohorov's theorem (cf. Section~\ref{sec:LetsConvergeToaMeasure} in the Appendix) with \eqref{eq:tightnessInX} we infer the existence of a probability measure
$\mu$ such that, up to a subsequence,
\begin{align*}
  \mu^*_N \rightharpoonup \mu.
   \quad  \textrm{ on } \mathcal{X} = L^2( 0, T ;H)
    \textrm{ (and also on } L^{2}( 0, T ; V')).
\end{align*}
Due to \eqref{eq:additionalStaredProcesses} with \eqref{eq:ConvTogetherUUbar} and \eqref{eq:GoodConvL2HErrorsStochastic}
it is clear that $U^*_{N}- U_N$ converges to zero
in $\mathcal{X} = L^2( 0, T ;H)$ and hence in $L^{2}( 0, T ;V')$ a.s..  Hence, by now invoking  \eqref{eq:GoodConvL2HErrors} and referring
back once more to  \eqref{eq:additionalStaredProcesses},  we have that $U^*_N - U^{**}_N$
converges to zero in $L^{2}( 0, T ;V')$ a.s.. Thus, invoking Lemma~\ref{thm:ConvTogether},
we conclude, again up to a subsequence, that:
\begin{align}\label{useofconv}
  \mu^{**}_N \rightharpoonup \mu
   \textrm{ on } L^2( 0, T ;V')
  \textrm{ and } \mu_N \rightharpoonup \mu
    \textrm{ on } \mathcal{X} = L^2( 0, T ;H).
\end{align}
In particular this is the first desired convergence for $\{\mu_{N}\}_{N \geq N_{1}}$, \eqref{eq:MuNConvL2H}.  On the other hand invoking
Prohorov's theorem with \eqref{eq:tightnessInY} and the convergence just established
for $\{\mu_{N}^{**}\}_{N \geq N_{1}}$ in $L^{2}( 0, T ;V')$ we see that  $\mu_{N}^{**}$ is tight
in $\mathcal{Y} = L^2( 0, T ;V') \cap \mathcal C([0,T]; V_{(3)}')$.  By Prohorov's theorem
in the other direction and passing to a further subsequence as needed we have
$$
  \mu^{**}_N \rightharpoonup \tilde{\mu}
  \textrm{ on } \mathcal{Y} = L^2( 0, T ;V') \cap \mathcal C([0,T], V_{(3)}').
$$
Since, clearly, $\tilde{\mu} = \mu$ this yields the second desired item \eqref{eq:MuNstarstarConvCV3}.
The proof of Proposition~\ref{prop:Tightness} is therefore complete.

\subsection{Proof of Theorem~\ref{thm:MainExistenceMGSol} \\Conclusion:Almost Sure convergence and the Passage to the Limit on the Skorokhod Basis }
\label{sec:SBPassagetoTheLimit}

We now have all of the ingredients to finally prove one the main results of this article, namely
Theorem~\ref{thm:MainExistenceMGSol}.  Suppose that
we are given $\mu_{U_{0}} \in Pr(H)$ and $\mu_{\ell} \in Pr( L^{2}_{loc}(0,\infty; V'))$ according to the conditions
specified in Definition~\ref{def:MGSol}.   As mentioned  in Remark  \ref{notthefiltrationyet}  now it is necessary to introduce  the stochastic
basis $\mathcal{S}_{\mathcal{G} }$ (defined as in subsection~\ref{sec:specificfiltration}),
 an element $U^{0} $  which is ${{\mathcal{G}}}_{0}$ measurable
 and a process $ \ell =\ell(t) $  measurable with respect to  the sigma algebra generated by the $  {W} (s)$ for $ s \in [0, t]$\footnote{Note that since the
  sigma algebra generated by the $  {W} (s)$ for $ s \in [0, t])$ is the smallest respect to which $W(t)$ is measurable, so $\ell(t)$ is
  adapted to $\{\mathcal {F}_t\}_{t\geq0}$, and hence all the previous results applies.
  },whose laws coincide  with those of
$\mu_{U_{0}}, \mu_{\ell}$.   Thus Proposition \ref{thm:ExistenceSelectionThm} applies and we obtain the existence of the $U^n_N$'s adapted to $ {\mathcal{G}}_{t_n}$.

We then approximate $U^{0} \in L^{2}(\Omega;H)$ with a
sequences of elements   $\{U^{0}_{N}\}_{N \geq 1} \subseteq L^{2}(\Omega,V_{(2)})$,
which maintains the bound \eqref{eq:InitialDataBnd} as described in Remark~\ref{rmk:SomeCrazyApproximation}
above.
Proposition~\ref{prop:BasicPropertiesOfConProcesses} applies and hence we can use this sequence $\{U^{0}_{N}\}_{N \geq N_{1}}$,  the
process $\ell$, and the sequence $U^n_N$
to define processes $\{U_{N}\}_{N \geq N_{1}}$, $\{U^{**}_{N}\}_{N \geq N_{1}}$ according to \eqref{eq:StepProcessesN}
and \eqref{eq:additionalStaredProcesses} respectively ($N_{1}$
is given by \eqref{eq:BndsStartingDiscretization}).  In order to pass to  the limit in the associated evolution
equation \eqref{eq:ustaruRelationship}, we consider the product measures:
\begin{align*}
	\nu_{N}(\cdot) := \Prb( (U^{**}_{N}, U_{N},\ell,W) \in \cdot)
\end{align*}
which are defined on the space
\begin{equation}\label{spacez}
\mathcal{Z} = \mathcal{Y} \times \mathcal{X} \times L^{2}( 0, T ;V') \times \mathcal C([0,T]; \mathfrak{U}_0).
\end{equation}
where, as above, $\mathcal{Y} = L^2( 0, T ;V') \cap \mathcal C([0,T], V_{(3)}')$, $\mathcal{X} = L^2( 0, T ; H)$,
and $\mathfrak{U}_{0}$ is defined as in Section~\ref{sec:SomeStochasticAnal}, \eqref{eq:AuxSpace}.
By invoking Proposition~\ref{prop:Tightness} we have that (passing to a subsequences as needed)
$\mu_{N} \rightharpoonup \mu$  on $\mathcal{X}$  and $\mu^{**}_{N} \rightharpoonup \mu$
on $\mathcal{Y}$, where $\mu_{N}$ and $\mu_{N}^{**}$ are defined as in \eqref{eq:BorelMeasuresForUUstarUstarstar}.  It follows, again up to passing to a subsequence, that $\nu_{N}$ converges weakly to
a measure $\nu$ on $\mathcal Z$ (defined in \eqref{spacez}).  Furthermore, recalling \eqref{eq:additionalStaredProcesses}
and making use of \eqref{eq:ConvTogetherUUbar}, \eqref{eq:GoodConvL2HErrors}, \eqref{eq:GoodConvL2HErrorsStochastic}
it is not hard to see that:
\begin{align*}
  \nu(  \{ (U^{**}, U,  \ell, W) \in \mathcal{Z} : U \not = U^{**} \}) = 0.
\end{align*}
\homework{The fact that $\nu^{**}_{N}$ converges weakly to $\nu$ is just
\cite[Theorem 2.8, Chapter 1]{Billingsley1}.  To see that
\begin{align}
  \nu(  \{ (U^{**}, U,  \ell, W) \in \mathcal{Z} : U \not = U^{**} \}) = 0.
  \label{eq:GoodDiagonalProps}
\end{align}
let us introduce
$$
 g_{K}(U^{**}, U,  \ell, W) = \int_{0}^{T} \| U^{**}- U\|^{2}_{V'}dt \cdot \phi_{K}\left( \int_{0}^{T}(\| U^{**} \|_{V'}^{2} + \|U\|^{2}_{V'})dt\right),
 \quad g_{\infty}(U^{**}, U,  \ell, W) = \int_{0}^{T} \| U^{**}- U\|^{2}_{V'}dt
$$
where $\phi_{K}$ is a smooth cut-off function which is $1$
when $|x| \leq K$ and zero when $|x| \geq K+1$.
Note that
\begin{align}
 0 \leq g_{K}(U^{**}, U,  \ell, W) \leq \int_{0}^{T} \| U^{**}- U\|^{2}_{V'}dt,
 \quad \lim_{K \uparrow \infty} g_{K} (U^{**}, U,  \ell, W) =\int_{0}^{T} \| U^{**}- U\|^{2}_{V'}dt.
\label{eq:FunfactsAboutGk}
\end{align}
Now since, for each $K> 0$, $g_{K} \in \mathcal C_{b}(\mathcal{Z})$ we have that
$$
	\int_{\mathcal{Z}} g_{K}(Z) d\nu_{N}(Z) \rightarrow \int_{\mathcal{Z}} g_{K}(Z) d\nu(Z).
$$
Since
$$
\int_{\mathcal{Z}} g_{K}(Z) d\nu_{N}(Z) \leq \E \int_{0}^{T} \| U^{**}_{N}- U_{N}\|^{2}_{V'}dt
$$
it follows from \eqref{eq:additionalStaredProcesses}
and then \eqref{eq:ConvTogetherUUbar}, \eqref{eq:GoodConvL2HErrors}, \eqref{eq:GoodConvL2HErrorsStochastic} that
$$
 \int_{\mathcal{Z}} g_{K}(Z) d\nu(Z) \leq \limsup_{N \rightarrow \infty} \E \int_{0}^{T} \| U^{**}_{N}- U_{N}\|^{2}_{V'}dt
 \leq c\limsup_{N \rightarrow \infty} \E \int_{0}^{T} \| \bar{U}_{N}- U_{N}\|^{2}_{V'}dt +
        c\limsup_{N \rightarrow \infty} \E \int_{0}^{T} \| \mathcal{E}_N^D + \mathcal{E}_N^S \|^{2}_{V'}dt
 =0.
$$
Hence, with \eqref{eq:FunfactsAboutGk} and the monotone convergence theorem we conclude that
$$
\int_{\mathcal{Z}} g_{\infty}(Z) d\nu(Z) =0
$$
which is equivalent to \eqref{eq:GoodDiagonalProps}.
}
Thus, by making use of the Skorokhod embedding theorem (see Section \ref{sec:LetsConvergeToaMeasure}) we obtain, relative
to a new probability space $(\tilde{\Omega}, \tilde{\mathcal{F}}, \tilde{\Prb})$, a sequence of
random variables
\begin{align}
	(\tilde{U}^{**}_{N}, \tilde{U}_{N}, \tilde{\ell}_{N}, \tilde{W}_{N}) \rightarrow (\tilde{U}, \tilde{U},  \tilde{\ell}, \tilde{W}) \quad \tilde{\Omega} \textrm{ a.s. in } \mathcal{Z}.
	\label{eq:jointConvNewBasis}
\end{align}
Moreover, the uniform bounds for $\{U_{N}\}_{N \geq N_{1}}$ in
$L^{2}(\Omega;L^{2}( 0, T ;V) \cap L^{\infty}( 0, T ;H))$ from
Proposition~\ref{prop:BasicPropertiesOfConProcesses}, \eqref{eq:UniformBoundAssertionContProcesses} imply that
in addition to \eqref{eq:jointConvNewBasis}
we also have
\begin{align}
   \tilde{U}_{N} \rightharpoonup \tilde{U}
   \quad
   \textrm{ weakly in } L^{2}(\Omega; L^{2}( 0, T ;V) )
   \textrm{ and weakly-star in }
   L^{2}(\Omega; L^{\infty}( 0, T ;H)).
   \label{eq:MoreWeakConvOnNewBasis}
\end{align}

Following a procedure very similar to \cite{Bensoussan1} we may now show that
$\tilde{W}_N$ is a cylindrical Brownian motion relative to the filtration
$\tilde{\mathcal{F}}^N_t $ defined as the sigma algebra generated by the $  (\tilde{U}^{**}_{N}(s), \tilde{U}_{N}(s), \tilde{\ell}_{N}(s), \tilde{W}_{N}(s))$ for $ s \leq t$
and that $(\tilde{U}^{**}_{N}, \tilde{U}_{N}, \tilde{\ell}_{N},\tilde{W}_{N})$ satisfies
\eqref{eq:ustaruRelationship} on the `Skorokhod space' $(\tilde{\Omega}, \tilde{\mathcal{F}}, \tilde{\Prb})$ viz.
\begin{align}
  \tilde{U}^{**}_{N}(t)
        	&= \tilde{U}^{**}_{N}(0)  + \int_{0}^{t} (\mathcal{N}(\tilde{U}_{N}) + \tilde{\ell}_{N})ds
		      + \int_{0}^{t} \sigma_{N}(\tilde{U}_{N}) d\tilde{W}_{N}.
		      \label{eq:StochasticApproxEvolution}
\end{align}
Using the convergences in \eqref{eq:jointConvNewBasis}--\eqref{eq:MoreWeakConvOnNewBasis}
with \eqref{eq:StochasticApproxEvolution} it is standard\footnote{Note that,
in particular, the stochastic terms involving $\sigma_N(U_N)$ converge due to \eqref{eq:SigmaApproxCond3}.
}
 to show that
$\tilde{U}$ satisfies  \eqref{eq:solRegularity}--\eqref{eq:InitialDataDist}
relative to the stochastic basis ${\tilde{\mathcal{S}}}:=(\tilde{\Omega}, \tilde{\mathcal{F}}, \{\tilde{\mathcal{F}}_t\}_{t \geq 0}, \tilde{\Prb},
\{\tilde{W}_k\}_{k \geq 1})$
where $\{\tilde{\mathcal{F}}_t\}_{t \geq 0}$ is defined as the sigma algebra generated by the   $ (\tilde{U}(s),  \tilde{\ell}(s),$ $\tilde{W}(s))$ for $s \leq t $
and $\tilde{W}_k = (\tilde{W},e_k)_{\mathfrak{U}}$.
Therefore
$(\tilde{S},\tilde{U}, \tilde{\ell})$ is a Martingale solution of
\eqref{eq:PEsAbstractFormulation} relative  to $\mu_{U_{0}}, \mu_{\ell}$
in the sense of Definition~\ref{def:MGSol} and the
proof of Theorem~\ref{thm:MainExistenceMGSol}
is complete.

\section{Convergence of the Euler Scheme}

We conclude  by  reinterpreting   from the point of view of numerical analysis, the study above as a result of convergence  for the Euler scheme (\ref{eq:EulerSemiFnRep}).
\begin{Thm}\label{convergenceof}
We assume given $\mu_{U_{0}} \in Pr(H)$ and $\mu_{\ell} \in Pr( L^{2}_{loc}(0,\infty; V'))$ according to Definition~\ref{def:MGSol}. We also
 assume given the stochastic basis $\mathcal{S}_{\mathcal{G} }$ (defined as in subsection~\ref{sec:specificfiltration}),
an element $U^{0}$  which is ${{\mathcal{G}}}_{0}$ measurable
 and a process $\ell=\ell(t)$  measurable with respect to the sigma algebra generated by the $  {W} (s)$ for $ s \in [0, t]$,   whose laws coincide with those of
$\mu_{U_{0}}, \mu_{\ell}$.  Let a
sequences of elements   $\{U^{0}_{N}\}_{N \geq 1} \subseteq L^{2}(\Omega,V_{(2)})$  approximate $U^{0} \in L^{2}(\Omega;H)$  as described in Remark~\ref{rmk:SomeCrazyApproximation}.
Then the processes $\{U_{N}\}_{N \geq N_{1}}$ defined   according to \eqref{eq:StepProcessesN}
 ($N_{1}$
is given by \eqref{eq:BndsStartingDiscretization}) adapted to $\{\mathcal{G}_t\}_{t \geq 0}$   exist.

Moreover the family  $\{ \mu_{N}\}$  of   probability laws of $\{U_{N}\}$, is weakly
 compact over the phase space $L^2 (0, T; \, H)\cap \mathcal C([0,T], V_{(3)}')$ and hence converges weakly to a probability measure $\mu$ on  the same phase space up to a subsequence.
Furthermore, there exists a probability space $(\tilde \Omega, \tilde F, \tilde P)$ and a subsequence of random vectors $(\tilde U_{N_k},   \tilde {\ell}_{N_k}, \tilde W_{N_k})$ with
 values in $\mathcal {Z}_1:=L^2 (0, T; \, H) \cap\mathcal C([0,T], V_{(3)}') \times L^2 (0, T; V^\prime) \times  \mathcal \mathcal C([0,T]; \mathfrak{U}_0)$ such that

(i) $(\tilde {U}_{N_k},  \tilde {\ell}_{N_k}, \tilde {W}_{N_k} )$ have the same probability distribution as $( U_{{N_k}},  \ell,  W)$.

(ii) $(\tilde {U}_{{N_k}}, \tilde {\ell}_{N_k},   \tilde {W}_{{N_k}})$ converges almost surely as ${N_k}\rightarrow \infty$, in the topology of $\mathcal {Z}_1$, to an element $( \tilde{U},  \tilde{\ell}, \tilde{W})$. Particularly,
\begin{equation}\label{strongconv1}
\tilde {U}_{{N_k}} \rightarrow \tilde U \mbox{ strongly  in } L^2 (0, T; \, H) \cap \mathcal C([0,T], V_{(3)}')\,\,a.s.,
\end{equation}
where $  \tilde{U} $ has the probability distribution $\mu$.
\begin{proof}
The existence of   $\{U_{{N_k}}\}_{N \geq N_{1}}$  follows directly from the existence of  the $U^n_N$'s proven in Proposition \ref{thm:ExistenceSelectionThm}.
(i) and (ii) follow from the Skorokhod embedding theorem (see Section \ref{sec:LetsConvergeToaMeasure})  as shown in Section \ref{sec:SBPassagetoTheLimit}.
\end{proof}
\end{Thm}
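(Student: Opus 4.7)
The plan is to assemble the theorem directly from the machinery developed in Sections~\ref{sec:DiscreteScheme}--\ref{sec:TightnessAndLimit}; essentially no new work is required beyond carefully invoking and repackaging the earlier propositions. First I would establish the existence of the discrete sequences $\{U^n_N\}_{n=0}^N$ adapted to $\{\mathcal{G}_{t_n}\}$ by appealing directly to Proposition~\ref{thm:ExistenceSelectionThm}, whose hypotheses are built into the statement of the theorem (the stochastic basis is $\mathcal{S}_{\mathcal{G}}$ and $\ell$ is measurable with respect to the sigma algebra generated by $W(s)$, $s\in[0,t]$). Having these, the continuous-time interpolants $\{U_N\}_{N \geq N_1}$ are defined piecewise by \eqref{eq:StepProcessesN} and are automatically adapted to $\{\mathcal{G}_t\}_{t\geq 0}$ by construction; the standing uniform estimates \eqref{eq:InitialDataBnd} on $\{U^0_N\}$ are exactly those verified in Remark~\ref{rmk:SomeCrazyApproximation}.

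Next I would obtain weak compactness of $\{\mu_N\}$ on $L^2(0,T;H)\cap \mathcal{C}([0,T];V_{(3)}')$. Here the key input is Proposition~\ref{prop:Tightness}, which already supplies weak convergence $\mu_N\rightharpoonup \mu$ on $\mathcal{X}=L^2(0,T;H)$ and $\mu_N^{**}\rightharpoonup \mu$ on $\mathcal{Y}=L^2(0,T;V')\cap \mathcal{C}([0,T];V_{(3)}')$. To upgrade the convergence of $\mu_N$ itself to the intersection space $L^2(0,T;H)\cap \mathcal{C}([0,T];V_{(3)}')$, I would argue as in the Cauchy/tightness step of Subsection~\ref{cauchy}: since $U_N-U_N^{**}=U_N-\bar U_N+\mathcal{E}_N^S+\mathcal{E}_N^D\to 0$ in probability in $L^2(0,T;V')$ thanks to \eqref{eq:ConvTogetherUUbar}, \eqref{eq:GoodConvL2HErrors}, and \eqref{eq:GoodConvL2HErrorsStochastic}, the laws $\mu_N$ inherit tightness in $\mathcal{C}([0,T];V_{(3)}')$ from $\mu_N^{**}$ via Lemma~\ref{thm:ConvTogether}. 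Combined with tightness in $\mathcal{X}$ this gives tightness, hence (by Prokhorov) weak compactness, of $\{\mu_N\}$ on the intersection space, with the limit point $\mu$ already identified.

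The third step is to produce the Skorokhod realization. I would form product measures of the laws of $(U_N,\ell,W)$ on the space
\[
\mathcal{Z}_1 := \bigl(L^2(0,T;H)\cap \mathcal{C}([0,T];V_{(3)}')\bigr)\times L^2(0,T;V')\times \mathcal{C}([0,T];\mathfrak{U}_0),
\]
observe that the marginal laws are all tight (the $W$-marginal is a single Wiener measure, the $\ell$-marginal is $\mu_\ell$ and is fixed, and the $U_N$-marginal is tight by the previous step), and conclude that the joint laws are tight on $\mathcal{Z}_1$. By Prokhorov we may extract a weakly convergent subsequence, and Skorokhod's embedding theorem (recalled in Section~\ref{sec:LetsConvergeToaMeasure}) then produces, on some new $(\tilde\Omega,\tilde{\mathcal F},\tilde\Prb)$, random variables $(\tilde U_{N_k},\tilde\ell_{N_k},\tilde W_{N_k})$ with the same joint laws as $(U_{N_k},\ell,W)$ that converge $\tilde\Prb$-a.s. in $\mathcal{Z}_1$. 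This gives (i) and the almost-sure convergence in (ii), including \eqref{strongconv1}, and identifies the limit marginal of $\tilde U$ as $\mu$.

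There is no genuine obstacle here — the only delicate point is keeping the filtration bookkeeping consistent between the two convergences $\mu_N\rightharpoonup\mu$ on $\mathcal{X}$ and $\mu_N^{**}\rightharpoonup\mu$ on $\mathcal{Y}$ when concluding that $\mu_N$ itself converges on the intersection space, so that the same limit $\mu$ governs both marginals and one can pass to a single joint subsequence for the Skorokhod step. Beyond that, every ingredient has already been proved, and the entire argument reduces to invoking Proposition~\ref{thm:ExistenceSelectionThm}, Proposition~\ref{prop:Tightness}, Lemma~\ref{thm:ConvTogether}, and the Skorokhod embedding theorem in sequence, exactly as in the passage-to-the-limit argument of Subsection~\ref{sec:SBPassagetoTheLimit}.
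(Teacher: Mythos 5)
There is a genuine gap in your middle paragraph. You try to upgrade the weak convergence of $\mu_N$ to the space $L^2(0,T;H)\cap\mathcal C([0,T];V_{(3)}')$ by applying Lemma~\ref{thm:ConvTogether} to $U_N-U^{**}_N$. But that lemma requires $\rho(X_n,Y_n)\to 0$ in probability for the metric $\rho$ of the target space — here $\sup_{t\in[0,T]}\|\cdot\|_{V_{(3)}'}$ — whereas \eqref{eq:ConvTogetherUUbar}, \eqref{eq:GoodConvL2HErrors}, \eqref{eq:GoodConvL2HErrorsStochastic} only give vanishing of $U_N-\bar U_N$, $\mathcal E^D_N$, $\mathcal E^S_N$ in $L^2(0,T;H)$ or $L^2(0,T;V')$, not in the sup norm; the available uniform bound \eqref{eq:UniformBndEStoch} on $\mathcal E^S_N$ in $L^\infty(0,T;H)$ is not a smallness statement. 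Moreover $U_N$ defined by \eqref{eq:StepProcessesN} is a piecewise constant (discontinuous) process, so it is not even an element of $\mathcal C([0,T];V_{(3)}')$, and the law $\mu_N$ cannot be regarded as a Borel measure on that space; the same objection applies to your product space $\mathcal Z_1$ when you form the joint law of $(U_N,\ell,W)$ with $U_N$ in the first factor.

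What the paper actually does in Section~\ref{sec:SBPassagetoTheLimit} sidesteps both difficulties: it keeps the auxiliary process $U^{**}_N$ in the product, working with $\nu_N(\cdot)=\Prb((U^{**}_N,U_N,\ell,W)\in\cdot)$ on $\mathcal Z=\mathcal Y\times\mathcal X\times L^2(0,T;V')\times\mathcal C([0,T];\mathfrak U_0)$, where $U^{**}_N$ lives in $\mathcal Y=L^2(0,T;V')\cap\mathcal C([0,T];V_{(3)}')$ (it is continuous, by \eqref{eq:ustaruRelationship}) and $U_N$ lives in $\mathcal X=L^2(0,T;H)$. Prohorov and Skorokhod are applied to $\nu_N$, producing $\tilde U^{**}_{N_k}\to\tilde U$ a.s.\ in $\mathcal Y$ and $\tilde U_{N_k}\to\tilde U$ a.s.\ in $\mathcal X$ with the same limit $\tilde U$, which therefore lies in the intersection space. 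Thus the $\mathcal C([0,T];V_{(3)}')$ convergence belongs to the $\tilde U^{**}_{N_k}$, not to $\tilde U_{N_k}$; the assertion in \eqref{strongconv1} should be read as convergence of $\tilde U_{N_k}$ in $L^2(0,T;H)$ together with membership of the limit $\tilde U$ in $L^2(0,T;H)\cap\mathcal C([0,T];V_{(3)}')$, or equivalently as convergence of $\tilde U^{**}_{N_k}$. To make your argument match the actual mechanism you should carry $U^{**}_N$ along in the product and state the two convergences separately, rather than attempting to transfer tightness to $\mu_N$ on $\mathcal C([0,T];V_{(3)}')$.
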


\section{Applications for Equations in Geophysical Fluid Dynamics}
\label{sec:GeophysicsExample}

In this section we apply the above framework culminating
in Theorem~\ref{thm:MainExistenceMGSol} and Theorem~\ref{convergenceof}
to a stochastic version of the Primitive Equations.
Our presentation here will focus on the case of
the equations of the oceans.  Note however that the abstract setting
introduced above is equally well suited to derive results for
analogous systems for the atmosphere or for the
coupled oceanic-atmospheric system (COA).\footnote{Via a suitable change of variables,
the dynamical equations for the compressible gases which constitute the earth's atmosphere may be shown to take
a mathematical form essentially similar to the incompressible equations for the oceans.}
We refer the interested reader to \cite{PetcuTemamZiane} for
further details on these other interesting situations.

\subsection{The Oceans Equations}
\label{sec:TheEquations}

The stochastic primitive equations of the Oceans take the form:
\begin{subequations}\label{eq:PE3DBasic}
  \begin{gather}
    \begin{split}
    \pd{t} \mathbf{v}
    + \nabla_{\mathbf{v}} \mathbf{v} + w \pd{z}\mathbf{v}
    + \frac{1}{\rho_0} \nabla p
    +& f \mathbf{k} \times \mathbf{v}
    - \mu_{\mathbf{v}} \Delta \mathbf{v}
    - \nu_{\mathbf{v}} \pd{zz} \mathbf{v}
    = F_{\mathbf{v}} + \sigma_{\mathbf{v}}(\mathbf{v},T,S) \dot{W}_1,
    \end{split}
    \label{eq:MomentumPE}\\
    \pd{z} p = - \rho g,
    \label{eq:HydroStaticPE}\\
    \nabla \cdot \mathbf{v} + \pd{z} w = 0
    \label{eq:divFreeTypeCondPE}\\
    \pd{t} T +\nabla_{\mathbf{v}} T
             + w \pd{z} T
             - \mu_{T} \Delta T
             - \nu_{T} \pd{zz} T
             = F_{T} + \sigma_{T}(\mathbf{v},T,S) \dot{W}_2,
    \label{eq:diffEqnTempPE}\\
    \pd{t} S + \nabla_{\mathbf{v}} S
             + w \pd{z} S
             - \mu_{S} \Delta S
             - \nu_{S} \pd{zz} S
             = F_{S} + \sigma_{S}(\mathbf{v},T,S) \dot{W}_3,
    \label{eq:diffEqnSaltPE}\\
    \rho = \rho_0 ( 1 + \beta_T( T - T_r) + \beta_S(S- S_r)).
    \label{eq:linearDensityDependence}
  \end{gather}
\end{subequations}
Here, $U := (\mathbf{v},T,S)= (u,v, T, S)$, $p$, $\rho$ represent the
horizontal velocity, temperature, salinity, pressure and density of the fluid under
consideration; $\mu_{\mathbf{v}}$, $\nu_{\mathbf{v}}$, $\mu_{T}$, $\nu_{T}$,
$\mu_{S}$, $\nu_{S}$ are positive coefficients which account for
the eddy and molecular diffusivities (viscosity) in the equations for
$\mathbf{v}$, $T$ and $S$.
The terms $F_{\mathbf{v}}, F_{T}, F_{S}$ are volumic sources of momentum,
heat and salt which are zero in idealized situations but which we consider to be
random in general.

The state dependent stochastic terms are driven by independent Gaussian white noise
processes $\dot{W}_{j}$, $j = 1,2,3$ which are formally delta correlated in time.
The stochastic terms may be written in the expansion
\begin{align}
       \sigma_{U}(U) \dot{W} = \left(
      \begin{array}{c}
    \sigma_{\mathbf{v}}(U) \dot{W}_1(t,\spX)\\
    \sigma_{T}(U)\dot{W}_2(t,\spX)\\
    \sigma_{S}(U)\dot{W}_3(t,\spX)\\
  \end{array}
      \right)
      =        \sum_{k \geq 1}  \left(
      \begin{array}{c}
    \sigma_{\mathbf{v}}^k(U)(t,\spX) \dot{W}_1^k(t)\\
    \sigma_{T}^k(U)(t,\spX)\dot{W}_2^k(t)\\
    \sigma_{S}^k(U)(t,\spX)\dot{W}_3^k(t)\\
  \end{array}
      \right),
      \label{eq:expStochFormal}
\end{align}
where the elements $\dot{W}_j^k$ are independent 1-D white (in time) noise
processes. We may interpret the multiplication in \eqref{eq:expStochFormal} in either the It\={o}
or the Stratonovich sense;  as we detail in one example below
the classical correspondence between the It\={o} and Stratonovich systems
allows us to treat both situations within the framework of
the It\={o} evolution \eqref{eq:PEsAbstractFormulation}.
We will describe
some physically interesting configurations of these `stochastic terms' in
detail below in Subsection~\ref{sec:StochasticForcings}.

The operators $\Delta = \pd{xx} + \pd{yy}$ and $\nabla = (\pd{x}, \pd{y})$
are the horizontal laplacian and gradient operator.  Here the operator $\nabla_{\mathbf{v}}$ captures
part of the convective (material) derivative and is defined according to
\begin{align}
 \nabla_{\mathbf{v}} := \mathbf{v} \cdot \nabla =  u \pd{x} + v \pd{y}.
\end{align}
\begin{Rmk}\label{rmk:SphericalGeo}
As given, the model \eqref{eq:PE3DBasic}, expresses the equations for
Oceanic flows in the `beta-plane approximation', that is to say we make use of the
fact that the earth is locally flat.
This setting is suitable for regional studies and we will focus on this case for
the simplicity of presentation. With suitable adjustments to the definition of the
operators $\Delta$, $\nabla$, $\nabla_{\mathbf{v}}$ and to the domain
introduced below we could consider the evolutions in the full spherical geometry
of the earth.  We refer to \cite{LionsTemamWang1} (and also to \cite{PetcuTemamZiane})
for further details on how to cast  a global circulation model in the form of e.g. (\ref{eq:PEsAbstractFormulation}).
\end{Rmk}
\subsubsection{Domain and Boundary Conditions}

The evolution \eqref{eq:PE3DBasic}  takes place on a bounded domain $\mathcal{M} \subset \mathbb{R}^{3}$
which we define as follows.  Fix a bounded, open domain $\Gamma_{i} \subset \RR^{2}$ with sufficiently smooth
boundary ($\mathcal{C}^{3}$, say);  $\Gamma_{i}$ represents the
surface of the ocean in the region under consideration.
We suppose we have defined a `depth' function $h = h(x,y): \Gamma_{i} \rightarrow \mathbb{R}$
which is at least $\mathcal{C}^{2}$ and is subject to the restriction
  $0 <\underline{h} \leq h(x,y) \leq \bar{h}$.
With these ingredients we then let
\begin{align*}
  \mathcal{M} := \left\{ \spX := (x,y,z) \in \mathbb{R}^{3}: (x,y) \in \Gamma_{i}, z \in (-h(x,y), 0)
                          \right\}.
\end{align*}
The boundary $\partial \mathcal{M}$ of $\mathcal{M}$,  is divided into its top $\Gamma_{i}$
lateral $\Gamma_{l}$ and bottom $\Gamma_{b}$ boundaries.
We denote  the outward unit normal to $\partial \mathcal{M}$
by $\mathbf{n}$ and the normal to $\Gamma_{l}$ in $\RR^2$
by $\mathbf{n}_H$.

We next prescribe the following, physically realistic boundary conditions
for equation \eqref{eq:PE3DBasic} considered in $\mathcal{M}$.  See
e.g.   \cite{PetcuTemamZiane}  for further details.  On $\Gamma_i$
we suppose
\begin{equation}\label{eq:3dPEBCTop}
	\pd{z} \mathbf{v} + \alpha_{\mathbf{v}} (\mathbf{v} - \mathbf{v}^{a})= \tau_{\mathbf{v}}, \quad w = 0,\quad
         \pd{z} T + \alpha_{T} (T - T^a) = 0,\quad
    \pd{z} S = 0,
\end{equation}
where $\alpha_{\vel}$, $\alpha_{T}$ are fixed positive constants
and $\tau_{\vel}$, $\vel^{a}$, $T^{a}$ are in general random
and non-constant in space and time.  Physically speaking, the first two equations in
\eqref{eq:3dPEBCTop} account for a boundary layer model
where $\vel^{a}$, $T^{a}$ represent the values for
velocity and temperature of the atmosphere at the surface
of the oceans; $\tau_{\vel}$ accounts for the shear of the
wind.

At the bottom of the ocean $\Gamma_b$ we take
\begin{equation}\label{eq:3dPEBCBottom}
  \begin{split}
  \mathbf{v} = 0, \quad w = 0, \quad
  \pd{\mathbf{n}} T = 0, \quad \pd{\mathbf{n}} S = 0.\\
  \end{split}
\end{equation}
Finally for the lateral boundary $\Gamma_l$
\begin{equation}\label{eq:3dPEBCSide}
  \mathbf{v} = 0, \quad \pd{\mathbf{n}}T = 0, \quad \pd{\mathbf{n}}S = 0.
\end{equation}
Note that, in view of the Neumann (no-flux) boundary conditions imposed on $S$
in \eqref{eq:3dPEBCTop}--\eqref{eq:3dPEBCSide}, there is no loss in generality
in assuming
\begin{align}
   \int_{\mathcal{M}} S d \mathcal{M} = 0=  \int_{\mathcal{M}} F_{S} d\mathcal{M}.
\label{eq:MeanZeroSaltState}
\end{align}
See \cite{PetcuTemamZiane} for further details.  Finally
\eqref{eq:PE3DBasic}--\eqref{eq:MeanZeroSaltState} are supplemented with
 initial conditions for $\mathbf{v}$, $T$ and $S$, that is
\begin{equation}\label{eq:InitialCondPEs}
   \mathbf{v} = \mathbf{v}_0, \quad
   T = T_0, \quad
   S = S_0, \quad
   \textrm{ at } t = 0.
\end{equation}
\subsubsection{A Reformulation of the Equations}
\label{sec:Reformulation}

Starting from the incompressibility condition, \eqref{eq:divFreeTypeCondPE} and the
hydrostatic equation \eqref{eq:HydroStaticPE} we may derive an equivalent
form for \eqref{eq:PE3DBasic} as follows.
\begin{subequations}\label{eq:PE3DBasicReform}
  \begin{gather}
    \begin{split}
    \pd{t} \mathbf{v}
    + \nabla_{\mathbf{v}} \mathbf{v} + w(\vel) \pd{z}\mathbf{v}
    + \frac{1}{\rho_0} \nabla p_{s}
    +& f \mathbf{k} \times \mathbf{v}
    - \mu_{\mathbf{v}} \Delta \mathbf{v}
    - \nu_{\mathbf{v}} \pd{zz} \mathbf{v}
    = F_{\mathbf{v}} - \nabla P +
     \sigma_{\mathbf{v}}(\mathbf{v},T,S) \dot{W}_1,
    \end{split}
    \label{eq:MomentumPERF}\\
    \pd{t} T +\nabla_{\mathbf{v}} T
             + w(\mathbf{v}) \pd{z} T
             - \mu_{T} \Delta T
             - \nu_{T} \pd{zz} T
             = F_{T} + \sigma_{T}(\mathbf{v},T,S) \dot{W}_2,
    \label{eq:diffEqnTempPERF}\\
    \pd{t} S + \nabla_{\mathbf{v}} S
             + w(\mathbf{v}) \pd{z} S
             - \mu_{S} \Delta S
             - \nu_{S} \pd{zz} S
             = F_{S} + \sigma_{S}(\mathbf{v},T,S) \dot{W}_3,
    \label{eq:diffEqnSaltPERF}\\
    \rho = \rho_0 ( 1 - \beta_T( T - T_r) + \beta_S(S- S_r)),
    \quad P = P(S,T) = g \int_{z}^{0} \rho d\bar{z},
    \label{eq:linearDensityDependenceRF}\\
     w(\mathbf{v})(\cdot,z) = \int^0_z \nabla \cdot \mathbf{v} d\bar{z},  \quad
    \nabla \cdot \int_{-h}^0  \mathbf{v} d\bar{z} = 0.
    \label{eq:divFreeTypeCondPERF}
  \end{gather}
\end{subequations}
This reformulation is desirable as, in particular,
it is more suitable for the typical functional setting of the equations which we describe next.
The unknowns and parameters in the equations are precisely those given above
immediately after \eqref{eq:PE3DBasic}.  Of course \eqref{eq:PE3DBasicReform} is subject
to the same initial and boundary conditions as in \eqref{eq:PE3DBasic}, namely
\eqref{eq:3dPEBCTop}--\eqref{eq:InitialCondPEs}.  For further details concerning the equivalence of
\eqref{eq:PE3DBasicReform} and \eqref{eq:PE3DBasic} see \cite{PetcuTemamZiane}.

\subsection{The Functional Setting and Connections with the Abstract Framework}
\label{sec:PrimEqFn}
We now proceed to introduce the basic function spaces associated with the Primitive
equations \eqref{eq:PE3DBasicReform} (equivalently \eqref{eq:PE3DBasic})
and then introduce and explain the
variational formulation of the various terms in equation connecting them
with the abstract assumptions laid out above in Section~\ref{sec:setup}.

\subsubsection{Basic Function Spaces}
To begin we define the smooth test functions
\begin{align*}
 \mathcal{V} := \mathcal{V}_{1} \times \mathcal{V}_{2}
 	= \left\{ \mathbf{v} \in \mathcal C^{\infty}(\bar{\mathcal{M}})^{2}
		: \nabla \cdot \int_{-h}^{0} \mathbf{v} dz = 0, \mathbf{v}_{| \Gamma_{l} \cap \Gamma_{b}} = 0
		\right\} \times
		 \left\{ (T,S) \in \mathcal C^{\infty}(\bar{\mathcal{M}})^{2}
		: \int_{\mathcal{M}} S d\mathcal{M} = 0
        \right\}.
\end{align*}
We now take $H$ to be the closure of $\mathcal{V}$ in $L^{2}(\mathcal{M})^{4}$ or, equivalently,
$H := H_{1} \times H_{2}$ where,
\begin{align}
    \left\{ \mathbf{v} \in L^2(\mathcal{M})^2:
           \nabla \cdot  \int_{-h}^0 \mathbf{v} \,dz = 0,
            n_H \cdot \int_{-h}^0 \mathbf{v} \,dz = 0 \textrm{ on } \partial \Gamma_{i} \right\}
   	\!         \times \!
             \left\{ (T,S) \in L^2(\mathcal{M})^2:
       \int_{\mathcal{M}} S \,d\mathcal{M} = 0 \right\}.
\label{eq:L2velFields}
\end{align}
On $H$ it is convenient to define the inner product and norm according to:
\begin{align*}
  (U,\tilde{U})_H := \int_{\mathcal{M}}
   (\mathbf{v} \cdot \tilde{\mathbf{v}} +
   K_{T}T\tilde{T}  + K_{S} S \tilde{S} ) d\mathcal{M}
      , \quad
  |U| := (U,U)_H^{1/2}.
\end{align*}
The constants $K_{T}, K_{S}>0$, which are
introduced for coercivity in the principal linear
terms in the equations, are chosen
in order to fulfill \eqref{eq:AvarDef} for \eqref{eq:AvariationalDef} below. We define $\LP$
to be the orthogonal (Leray-type) projection from $L^{2}(\mathcal{M})^{4}$
onto $H$.

We shall next define the $H^{1}$ type space $V = V_{1} \times V_{2}$ where
\begin{align}
 \left\{ \vel \in H^1(\mathcal{M})^2:
            \int_{-h}^0 \nabla \cdot  \mathbf{v} \,dz = 0, \,
            \vel = 0 \textrm{ on } \Gamma_l \cup \Gamma_b
        \right\} \times
         \left\{ (T,S) \in H^1(\mathcal{M})^2:
            \int_{\mathcal{M}} S \,d\mathcal{M} = 0
        \right\},
\label{eq:H1vectFields}
\end{align}
We endow $V$ with the inner product and norm
\begin{align}
  ((U,\tilde{U}))_V := ((U,\tilde{U}))_{\mathbf{v}} + K_{T}(( U,\tilde{U} ))_{T} + K_{S}(( U,\tilde{U} ))_{S},
  \quad \|U\| := ((U,U))^{1/2}.
  \label{eq:Vinnerprod}
\end{align}
where
\begin{align*}
 ((U,\tilde{U}))_{\mathbf{v}} &:= \int_{\mathcal{M}} (
  \mu_{\mathbf{v}} \nabla \mathbf{v} \cdot \nabla \tilde{\mathbf{v}} +
  \nu_{\mathbf{v}}\pd{z}  \mathbf{v} \cdot \pd{z} \tilde{\mathbf{v}})
    \,d\mathcal{M}   + \alpha_{\vel} \int_{\Gamma_{i}} \vel \cdot \tilde{\vel} d \Gamma_{i},\\
 (( U,\tilde{U} ))_{T} &:=  \int_{\mathcal{M}} ( \mu_{T} \nabla T \cdot \nabla \tilde{T}+
    \nu_{T} \pd{z}  T \cdot \pd{z} \tilde{T})
  \,d\mathcal{M} + \alpha_{T}\int_{\Gamma_{i}} T \tilde{T} d \Gamma_{i},\\
   (( U,\tilde{U} ))_{S} &:=  \int_{\mathcal{M}} (   \mu_{S} \nabla S \cdot \nabla \tilde{S}+
      \nu_{S}  \pd{z}  S \cdot \pd{z} \tilde{S})
  \,d\mathcal{M}.
 \end{align*}
From \eqref{eq:H1vectFields}--\eqref{eq:Vinnerprod} we may deduce the Poincar\'{e} type inequality
  $|U| \leq c \|U\|,$
  for every $U \in V$.
This justifies taking $\| \cdot \|$ as the norm for $V$ (which is equivalent to the $H^{1}$ norm).  Finally we
define:
\begin{align}
  V_{(2)}, V_{(3)}  \textrm{ are the closures of } \mathcal{V} \textrm{ in }
   H^2(\mathcal{M})^4, H^{3}(\mathcal{M})^{4} \textrm{ norms respectively}
\label{eq:HigherOrderSpaces}
\end{align}
and simply endow $V_{(2)}$ and $V_{(3)}$ with, respectively, the $H^{2}(\mathcal{M})$ and $H^{3}(\mathcal{M})$ norms.
Let $V'$ (resp. $V_{(2)}'$, $V_{(3)}'$) be the dual of $V$ (resp. $V_{(2)}$, $V_{(3)}$)
relative to the $H$ inner product.

It is clear with the Rellich-Kondrachov theorem and standard
facts about Hilbert spaces that the spaces introduced in \eqref{eq:L2velFields}--\eqref{eq:HigherOrderSpaces}
provide a suitable Gelfand-Lions inclusion as desired for \eqref{eq:BasicSpaces}.  On this functional basis
we now turn to describe the  variational form of \eqref{eq:PE3DBasicReform}.

\subsubsection{The Variational Form of the Equations}
To capture most of the linear structure in \eqref{eq:PE3DBasicReform}
we define the operator $A$ as a continuous linear map from $V$
to $V'$ via the bilinear form:
\begin{equation}
  a(U, \tilde{U}) := ((U,\tilde{U}))_{V}
  - \int_{\mathcal{M}} \left( g  \int_z^0 (\beta_S S - \beta_T T ) d\bar{z} \right)\nabla\cdot \tilde{\vel} d\mathcal{M}.
  \label{eq:AvariationalDef}
\end{equation}
We observe that if $K_T$, $K_S$ in \eqref{eq:Vinnerprod} are chosen sufficiently
large then, $a$ is \emph{coercive}, namely it satisfies the condition
required by  \eqref{eq:AvarDef}.

We next define the main nonlinear portion of \eqref{eq:PE3DBasicReform}.
Motivated by \eqref{eq:divFreeTypeCondPERF} we take
$w= w(U) := \int_z^0 \nabla \cdot \vel \,d\bar{z}$
and then define a bilinear form $B: V\times V \rightarrow V_{(2)}'$ via the trilinear form
\begin{equation}\label{eq:nonlinearDualParing}
  b(U, \tilde{U}, U^*)
  := b_\vel(U, \tilde{U}, U^*)
     + K_{T} \cdot b_T(U, \tilde{U}, U^*) + K_{S} \cdot b_{S}(U, \tilde{U}, U^*),
\end{equation}
where
\begin{align*}
   b_\vel(U, \tilde{U}, U^*)
    :=&  \int_{\mathcal{M}}\left( (\vel \cdot \nabla_2) \tilde{\vel}
        + w(U) \pd{z} \tilde{\vel} \right) \cdot \vel^{*} \,d \mathcal{M},\\
    b_T(U, \tilde{U}, U^*) :=&
    \int_{\mathcal{M}} \left((\vel \cdot \nabla) \tilde{T}
        + w(U) \pd{z} \tilde{T}\right) T^{*} \,d \mathcal{M},\\
    b_S(U, \tilde{U}, U^*) :=&
    \int_{\mathcal{M}} \left((\vel \cdot \nabla) \tilde{S}
        + w(U) \pd{z} \tilde{S}\right) S^{*} \,d \mathcal{M}.
\end{align*}
To capture the rotation (Coriolis) term in \eqref{eq:MomentumPERF} we
define $E: H \rightarrow H$ via:
\begin{align}
	e(U, \tilde{U}) = \int_{\mathcal{M}} (2 f \mathbf{k} \times \vel) \cdot \tilde{\vel} d \mathcal{M}.
         \label{eq:rotationalTerm}
\end{align}
Note carefully that $a$, $e$ and $b$ satisfy the conditions imposed in Section~\ref{sec:BasicOperators}
which we used in the abstract result Theorem~\ref{thm:MainExistenceMGSol}.
The inhomogenous terms in \eqref{eq:PE3DBasicReform} are given by the
element $\ell$ defined according to
\begin{align}
 \ell(\tilde{U}) =& \int_{\mathcal{M}} (F_\vel \tilde{v} + K_TF_T \tilde{T}+ K_S F_S \tilde{S}) d\mathcal{M}
        +\int_{\mathcal{M}} \left(g  \int_z^0 (1 + \beta_T T_r- \beta_S S_r) dz \right) \nabla \cdot \tilde{\vel} d\mathcal{M}
        \notag\\
        &+\int_{\Gamma_i} [ (\tau_\vel + \alpha_\vel \vel^a)\cdot \tilde{v} + \alpha_T T^a\tilde{T} ]d \Gamma_i.
\label{eq:inhomoConcrete}
\end{align}
Note that $\vel^a, \tau_\vel, T^a$, which represent the velocity, shear force of the wind and the temperature
at the surface of ocean are have significant uncertainties and should thus be considered to have a random
component in practice.

\subsection{Some Stochastic Forcing Regimes}
\label{sec:StochasticForcings}

It remains to complete the
connection between \eqref{eq:PE3DBasic} and \eqref{eq:PEsAbstractFormulation} by describing
various physically interesting scenarios for $\sigma(U)\dot{W}$.  We connect these `concrete descriptions'
with the terms $\sigma$ and $\SDT$ in the abstract equation \eqref{eq:PEsAbstractFormulation}
(or equivalently to $g$, $\SDvar$ in \eqref{eq:solEqWeakForm}).  We consider three situations in
detail below.  In each case we describe how to define $\sigma_U$ appearing in \eqref{eq:PE3DBasicReform}
and we then take $\sigma(\cdot) = \Pi \sigma_U(\cdot)$.

\subsubsection{Additive Noise}

The most classical case is to consider an \emph{additive noise} where we suppose that $\sigma_U$ is
independent of $U = (\vel,T,S)$.  In other words $\sigma_{U}: [0,\infty) \times \mathcal{M} \rightarrow (L_{2}(\mathfrak{U}, L^2(\mathcal{M})))^{4}$.
In order to satisfy \eqref{eq:SublinearCondH} we would require that
\begin{align}
	\sup_{t \geq 0} \sum_{k \geq 1} |\sigma^{k}_U(t)|^{2} =
	\sup_{t \geq 0} |\sigma_U(t)|_{L_{2}(\mathfrak{U}, H)}^{2}< \infty.
\label{eq:AdditiveNoiseAssumption}
\end{align}
Note that since, the It\=o and Stratonovich interpretations of \eqref{eq:expStochFormal} coincide in the additive case
we may take $\SDT \equiv 0$ so that \eqref{eq:SublinearConditionsOnSDTerm}
is automatically satisfied.

We also observe that in this case we may give an explicit (if formal) characterization
of the space-time correlation structure of the noise
\begin{align}
  \E \left[ \sigma_{U}(t,\spX)\dot{W}(t, \spX) \cdot \sigma_{U}(s,\mathbf{y})\dot{W}(s,\mathbf{y})\right]
  = K(t,s,\spX,\mathbf{y})\delta_{t-s}
  \label{eq:STCorStructure}
\end{align}
where the correlation kernel $K$ is given by
\begin{align*}
   K(t,s,\spX,\mathbf{y}) = \sum_{k \geq 1} \sigma^{k}_{U}(t,\spX) \cdot \sigma^{k}_{U}(s,\mathbf{y}).
\end{align*}
\begin{Rmk}
Given the condition \eqref{eq:AdditiveNoiseAssumption} the case of space-time white noise is rule
out under our framework.  Of course such a space-time white noise is very degenerate in
space (not even defined in $L^{2}_{x}$) and so such a situation is far from reach due to the highly
nonlinear character of the PEs.   Similar remarks apply to the 3D stochastic Navier-Stokes equations but
see \cite{DaPratoDebussche2003} for the 2-D case.
\end{Rmk}

\subsubsection{Nemytskii Type Operators}

We next consider stochastic forcings of transformations of the unknown $U$
as follows.  Let $\Psi = (\Psi_{\vel}, \Psi_{T},\Psi_{S}): \RR^{4} \rightarrow \RR^{4}$
and suppose, for simplicity,
that $\Psi$ is smooth.  We denote the partial derivatives of $\Psi$ with respect to the
$\vel$, $T$, $S$ variables by $\pd{\vel} \Psi$, $\pd{T} \Psi$, $\pd{S}\Psi$
and the gradient by $\nabla_U \Psi$.
Take a sequence of smooth functions
$\alpha^{k} = \alpha^{k}(\spX): \mathcal{M} \rightarrow \RR$ and
define
\begin{align}
	\sigma^{k}_{U}(U,t, \spX) = \Psi(U) \alpha^{k}(\spX).
\label{eq:SigmaKNemytskii}
\end{align}
We may formally interpret $\sigma_{U}(U) \dot{W} = \Psi(U) \dot{\eta}$
where:
\begin{itemize}
\item $\dot{\eta}$ is a white in time Gaussian process with
the spatial-temporal correction structure
$
  \E (\dot{\eta}(t,\spX) \dot\eta(s,\mathbf{y})) = K(\spX,\mathbf{y})\delta_{t-s}
$
where
$
K(\spX,\mathbf{y}) = \sum_{k \geq 1} \alpha^{k}(\spX) \cdot \alpha^{k}(\mathbf{y})
$.
\item The `multiplication' $\Psi(U)$ and $\dot{\eta}$ may be taken in either the It\={o} or
the Stratonovich sense.
\end{itemize}

We now connect \eqref{eq:SigmaKNemytskii} to \eqref{eq:PEsAbstractFormulation}
in the It\={o} or
the Stratonovich situations in turn illustrating conditions on $\Psi$ and the $\alpha_k$'s
that guarantee that \eqref{eq:SublinearCondH} holds
and in the Stratonovich case that \eqref{eq:SublinearConditionsOnSDTerm} holds.

{\bf The It\={o} Case:}
Suppose that
\begin{align}
	|\Psi(U)|^{2}  \leq c_{\Psi}(1 + |U|^{2})
	\quad &\textrm{ for all } U \in \RR^{4},
	\label{eq:NMOPhiSubLin}
\end{align}
and for the elements $\alpha^{k}$ we suppose that
\begin{align}
	\sum_{k \geq 1} \|\alpha^{k}\|_{V_{(2)}}^{2} < \infty.
\label{eq:NMOalphakSumBnd}
\end{align}
Under \eqref{eq:NMOPhiSubLin}--\eqref{eq:NMOalphakSumBnd}
we have
\begin{align*}
	|\sigma(U)|^{2}_{L_2(\mathfrak{U},H)} \leq   \sum_{k \geq 1} |\Psi(U) \alpha^{k}|^{2}_{L^{2}}
	\leq c_{\Psi} \sum_{k \geq 1} \|\alpha^k\|_{L^{\infty}(\mathcal{M})}^{2} (1 + |U|^{2}_{H}).
	\leq c \sum_{k \geq 1} \|\alpha^k\|_{V_{(2)}}^{2} (1 + |U|^{2}_{H}),
\end{align*}
so that \eqref{eq:SublinearCondH} holds for constant $c_{3}$ that depends
on $c_{\Psi}, \sum_{k \geq 1} \|\alpha^{k}\|_{V_{(2)}}^{2}$ and the constant
in Agmon's inequality.
Note that, since we are considering the case of an It\=o noise, $\SDT \equiv 0$.

{\bf The Stratonovich Case:}
If we understand the multiplication $\Psi(U) \dot{\eta}$ in the Strantonovich sense
then we may convert back to an It\={o} type evolution according to:
\begin{align}
	\Psi(U) \dot{\eta} = \sum_{k \geq 1} \Psi(U) \alpha^k \circ d W^k
		= \SDT_U(U) + \sum_{k \geq 1} \Psi(U) \alpha^k  d W^k
		\label{eq:ConvFormulaNaminOpp}
\end{align}
where
\begin{align*}
	\SDT_U(U,x) =& \Psi(U) \cdot \nabla_U \Psi(U) \sum_{k =1} \alpha^k(x)^2
\end{align*}
See e.g. \cite{Arnaud1974, KloedenPlaten1992} for further details on this conversion formula.
Under the additional
assumption
\begin{align}
	|\nabla_U \Psi (U) | \leq c < \infty
	\quad &\textrm{ for all } U \in \RR^{4},
	\label{eq:NMOPhiBnd}
\end{align}	
we define $\SDT_U(U) := \Pi \SDT(U)$ for
any $U \in H$.
It is clear that
$\SDT$ satisfies
\eqref{eq:SublinearConditionsOnSDTerm}.

\begin{Rmk}\label{rmk:StratFromalSoFar}
We note here that the relationship \eqref{eq:ConvFormulaNaminOpp}
is, for now, only formal; we prove the existence of martingale solutions
for the system that results from a formal application of this conversion
formula (see e.g. \cite{Arnaud1974, KloedenPlaten1992}).   We leave the rigorous
justification of \eqref{eq:ConvFormulaNaminOpp} and the related issues of an
approximation of Wong-Zakai type (\cite{WongZakai1965}) of \eqref{eq:PEsAbstractFormulation} for future
work.  Note however that \eqref{eq:ConvFormulaNaminOpp} has already
been explored in \cite{GreckschSchmalfuss1996,Twardowska1996, ChueshovMillet2011}
in an infinite dimensional fluids context for pathwise solutions
and in \cite{TessitoreZabczyk2006} for martingale solutions of a class of abstract,
nonlinear, stochastic PDEs.
\end{Rmk}

\subsubsection{Stochastic Forcing of Functionals}
Finally we examine the case when we stochastically force
\emph{functionals of the unknown} i.e.
terms which have a non-local dependence on the solution $U$.  For example
consider, for $k \geq 1$ continuous (not necessarily linear)
$\phi^{k} :=  \phi^{k}(U): H \rightarrow \RR$,
and sufficiently smooth
$\alpha^{k} = \alpha^{k}(t, \spX): [0,\infty) \times \mathcal{M} \rightarrow \RR^{4}$.
We define
\begin{align}
  \sigma^{k}_{U}(U,t,\spX) = \phi^{k}(U) \alpha^{k}(t, \spX).
  \label{eq:functionalNoiseStructure}
\end{align}
Here, we interpret $\sigma_U(U) \dot{W}$ in the It\={o} sense.
Subject to, for example,
\begin{align}
  \sup_{k} |\phi^{k}(U)|^{2} \leq c(1+|U|^{2}), \quad
  \sup_{t \geq 0} \sum_{k \geq 1} \|\alpha^{k}(t)\|^{2} < \infty
  \label{eq:functionalNoiseStructureCond}
\end{align}
we obtain a $\sigma$ from \eqref{eq:functionalNoiseStructure} which
satisfies \eqref{eq:SublinearCondH}.
For a `concrete example' of a $\sigma$ of the form \eqref{eq:functionalNoiseStructure}
which satisfies \eqref{eq:functionalNoiseStructureCond}
let $\{\psi^{k}\}_{k \geq1}$
be a sequence of elements in $L^{2}(\mathcal{M})^{2}$ with $\sup_{k} |\psi^{k}|_{L^{2}(\mathcal{M})} <\infty$ and let
$\alpha^{k} \in V$ satisfying the sumability condition in \eqref{eq:functionalNoiseStructureCond}.
We take $\phi^{k}(U) = \int_{M}\vel(\spX) \cdot \psi^{k}(\spX) d\mathcal{M}$ and obtain
\begin{align}
	\sigma(U)\dot{W} = \sum_{k \geq 1} \left(\int_{\mathcal{M}}\vel(\spX) \cdot \psi^{k}(\spX) d\mathcal{M} \right)
	\alpha^{k}(t,\spX) dW^{k}(t).
	\label{eq:StochasticForceFnConcrete}
\end{align}

\appendix
\section{Appendix: Technical Complements}
\label{sec:Appendix}

We collect here, for the convenience of the reader, various  technical results
which have been used in the course of the analysis above.  While some of the material may
be considered to be 	somewhat `classical' by specialists we believe that the stochastic type
results will be useful to the non-probabilists and that the deterministic results will be helpful
for the probabilists.

\subsection{Some Convergence Properties of Measures}
\label{sec:LetsConvergeToaMeasure}

We next briefly review some basic notations of convergence
for collections of Borel probability measures.
In particular we highlight a certain abstract convergence lemma that
has been used in a crucial way in the passage to the limit several times above.
For further details concerning the general theory of convergence in
spaces of probability measures see e.g. \cite{Billingsley1} and \cite{RevuzYor}.

Let $(\mathcal{H},\rho)$ be a complete metric space and denote
by $Pr(\mathcal{H})$ the collection of Borel probability measures on $\mathcal{H}$.
We recall that a sequence $\{\mu_{n}\}_{n \geq 1} \subset Pr(\mathcal{H})$  is said to
\emph{converge weakly}
to a measure $\mu$ on $\mathcal{H}$ (denoted by $\mu_{n} \rightharpoonup \mu$)
if and only if
\begin{align}
   \lim_{n \rightarrow \infty}  \int  f(x)  d \mu_{n}(x)  = \int  f(x) d \mu(x)
   \textrm{ for every bounded continuous
function $f: \mathcal{H} \rightarrow \mathbb{R}$}.
\end{align}
We recall that a collection
$\Lambda \subset Pr(\mathcal{H})$ is said to be \emph{weakly relatively compact} if every
sequence  $\{\mu_{n}\}_{n \geq 1} \subset \Lambda$ possesses a weakly convergent
subsequence.  On the other hand we say that $\Lambda \subset Pr(\mathcal{H})$ is \emph{tight} if,
for every $\epsilon > 0$ there exists a compact set $K_\epsilon \subset \mathcal{H}$
such that $\mu(K_\epsilon) \geq 1 - \epsilon$, for each $\mu \in \Lambda$.  The
\emph{Prokhorov theorem} asserts that these two notions, namely tightness and weak compactness
of probability measures are equivalent.

We also make use of the \emph{Skorokhod embedding theorem} which
states that, whenever $\mu_{n} \rightharpoonup \mu$ on $\mathcal{H}$, then there exists a probability space $(\tilde \Omega, \tilde {\mathcal{F}}, \tilde \Prb)$ and
a sequence of random variables $X_{n}: \tilde \Omega \rightarrow \mathcal{H}$ such that
$\tilde \Prb(X_{n} \in \cdot) = \mu_{n}(\cdot)$ and which converges a.s. to a random variable
$X: \tilde \Omega \rightarrow \mathcal{H}$ with $\tilde \Prb(X \in \cdot) = \mu(\cdot)$.

The following convergence result, found in e.g. \cite{Billingsley1},
relates roughly speaking weak convergence and clustering
in probability, and was used to facilitate the proof of  \eqref{useofconv} in Section \ref{cauchy}:
\begin{Lem}\label{thm:ConvTogether}
Let $(\mathcal{H}, \rho)$ be an arbitrary metric space.
Suppose $X_{n}$ and $Y_{n}$ are $\mathcal{H}$-valued
random variables and let $\mu_{n}(\cdot) = \Prb(X_{n} \in \cdot)$
and $\nu_{n}(\cdot) = \Prb(Y_{n} \in \cdot)$ be the associated sequences
of the probability laws.
If the sequence $\{\mu_{n}\}_{n \geq 0}$ converges weakly to a probability measure $\mu$
and if, for all $\epsilon > 0$
\begin{displaymath}
	\lim_{n \rightarrow \infty} \Prb(\rho(X_{n}, Y_{n}) \geq \epsilon) = 0.
\end{displaymath}
Then $\nu_{n}$ also converges weakly to $\mu$.
\end{Lem}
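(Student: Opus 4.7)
The plan is to prove the lemma via the portmanteau theorem, which characterizes weak convergence $\nu_n \rightharpoonup \mu$ on a metric space $(\mathcal{H},\rho)$ by the condition $\limsup_n \nu_n(F) \leq \mu(F)$ for every closed set $F \subseteq \mathcal{H}$. So my goal reduces to verifying this one-sided inequality for an arbitrary closed $F$, and then invoking portmanteau in the reverse direction to conclude weak convergence.

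First, I would fix a closed set $F \subseteq \mathcal{H}$ and, for each $\epsilon > 0$, introduce the closed $\epsilon$-neighborhood $F^\epsilon := \{x \in \mathcal{H} : \rho(x,F) \leq \epsilon\}$. The key (very elementary) geometric observation is that if $\rho(X_n,Y_n) < \epsilon$ and $Y_n \in F$, then $X_n \in F^\epsilon$; contrapositively,
\begin{align*}
  \{Y_n \in F\} \subseteq \{X_n \in F^\epsilon\} \cup \{\rho(X_n,Y_n) \geq \epsilon\}.
\end{align*}
Taking probabilities gives the basic inequality
\begin{align*}
  \nu_n(F) \leq \mu_n(F^\epsilon) + \Prb(\rho(X_n,Y_n) \geq \epsilon).
\end{align*}

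Next I would pass to the limit $n \to \infty$. The second term vanishes by the clustering hypothesis, while the first term is controlled by applying the portmanteau inequality (in the forward direction) to the closed set $F^\epsilon$ with respect to the weak convergence $\mu_n \rightharpoonup \mu$:
\begin{align*}
  \limsup_n \nu_n(F) \leq \limsup_n \mu_n(F^\epsilon) \leq \mu(F^\epsilon).
\end{align*}
Finally I would let $\epsilon \downarrow 0$. Because $F$ is closed in a metric space, one has $\bigcap_{\epsilon > 0} F^\epsilon = F$, so continuity of $\mu$ from above along the decreasing family $\{F^\epsilon\}$ yields $\mu(F^\epsilon) \downarrow \mu(F)$. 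Hence $\limsup_n \nu_n(F) \leq \mu(F)$, which by the portmanteau theorem is equivalent to $\nu_n \rightharpoonup \mu$.

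There is no genuine obstacle here; this is a standard result and the only subtle point is the routine verification (which I would simply cite from e.g.\ \cite{Billingsley1}) that $F^\epsilon$ is closed and that $\mu(F^\epsilon) \to \mu(F)$ as $\epsilon \downarrow 0$. The proof requires no separability or completeness assumption on $\mathcal{H}$ beyond what the metric structure already provides, consistent with the generality in the statement.
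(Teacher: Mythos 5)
Your proof is correct. The paper does not actually prove this lemma --- it states it and defers to \cite{Billingsley1} --- and your argument (the neighborhood inclusion $\{Y_n \in F\} \subseteq \{X_n \in F^{\epsilon}\} \cup \{\rho(X_n,Y_n) \geq \epsilon\}$, portmanteau applied to the closed set $F^{\epsilon}$, then $\mu(F^{\epsilon}) \downarrow \mu(F)$ as $\epsilon \downarrow 0$) is precisely the standard proof given in that reference, so it matches the intended justification.
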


\subsection{An extension of the Doob-Dynkin Lemma}
 We  extend the Doob-Dynkin Lemma (see e.g. \cite{Oksendal1}) to the case where the image space of the measurable functions are complete separable metric spaces. In order to achieve this goal, let us recall the following notions and results from \cite{dudley}.

 If $(\Omega, \mathcal F)$ is a measure space and $E\subset \Omega$, let $\mathcal F_{E} := \{ B \cap E: B \in \mathcal F \}$. Then $\mathcal F_{E}$ is a sigma algebra of subsets of $E$, and $\mathcal F_{E}$ will be called the $relative$ sigma algebra (of $\mathcal F$ on $E$).
\begin{Prop}\label{doob2}
Let $(\Omega, \mathcal F)$ be any measurable space and $E$ any subset of $\Omega$ (not necessarily in $\mathcal F$). Let $f$ be a function on $E$ with values in a Polish space $\mathcal H$ and  measurable with respect to $\mathcal {F}_E$. Then $f$ can be extended to a function on all of $\Omega$, measurable with respect to $\mathcal F$.
\end{Prop}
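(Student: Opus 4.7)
The plan is to first establish the result in the special case $\mathcal{H} = \mathbb{R}$ via a rational level-set construction, and then reduce the general Polish case to this one via the Borel isomorphism theorem (Kuratowski's theorem). The main subtlety is that $E$ need not belong to $\mathcal{F}$, which rules out the naive extension ``set $\tilde f = h_0$ on $\Omega \setminus E$'' (the indicator of $E$ need not be $\mathcal{F}$-measurable).

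For the real-valued case, fix $f: E \to \mathbb{R}$ measurable with respect to $\mathcal{F}_E$. For each $q \in \mathbb{Q}$ the set $\{f \leq q\}$ belongs to $\mathcal{F}_E$, so we may select some $A_q \in \mathcal{F}$ with $\{f \leq q\} = A_q \cap E$. I would then define
\[
	\tilde{f}(\omega) := \inf\{q \in \mathbb{Q} : \omega \in A_q\}, \qquad \inf \emptyset := +\infty .
\]
For $\omega \in E$ one has $\omega \in A_q$ iff $f(\omega) \leq q$, hence $\tilde f(\omega) = \inf\{q \in \mathbb{Q}: f(\omega) \leq q\} = f(\omega)$. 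For each $s \in \mathbb{R}$,
\[
	\{\tilde f \leq s\} = \bigcap_{n \geq 1} \bigcup_{q \in \mathbb{Q} \cap (s, s + 1/n)} A_q \in \mathcal{F},
\]
so $\tilde f$ is $\mathcal{F}$-measurable as an $[-\infty, +\infty]$-valued function. Since $\{\tilde f = \pm \infty\} \in \mathcal{F}$ and is disjoint from $E$, resetting $\tilde f$ to $0$ on this set yields a real-valued $\mathcal{F}$-measurable extension of $f$.

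For the general case, I would invoke the Borel isomorphism theorem: every Polish space $\mathcal{H}$ is Borel isomorphic to a Borel subset $B \subseteq [0,1]$ via some bimeasurable bijection $\phi: \mathcal{H} \to B$. Then $\phi \circ f : E \to \mathbb{R}$ is $\mathcal{F}_E$-measurable, so by the previous step it admits an $\mathcal{F}$-measurable extension $g: \Omega \to \mathbb{R}$. Fixing any $h_0 \in \mathcal{H}$, set
\[
	\tilde f(\omega) := \begin{cases} \phi^{-1}(g(\omega)), & g(\omega) \in B, \\ h_0, & \text{otherwise}. \end{cases}
\]
Since $g^{-1}(B), g^{-1}(B^c) \in \mathcal{F}$ and $\phi^{-1}$ is Borel measurable, one verifies that $\tilde f^{-1}(C) \in \mathcal{F}$ for every Borel $C \subseteq \mathcal{H}$, and $\tilde f = f$ on $E$ because $g = \phi \circ f$ there takes values in $B$.

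The only substantive external ingredient is the Borel isomorphism theorem; everything else is routine bookkeeping. The hard part, conceptually, is precisely the initial observation that one must work with the countable family $\{A_q\}_{q \in \mathbb{Q}} \subset \mathcal{F}$ rather than attempt a pointwise definition of $\tilde f$ on $\Omega \setminus E$, since $E$ itself is inaccessible to $\mathcal{F}$.
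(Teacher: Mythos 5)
Your two-step structure matches the paper's own proof exactly: the paper disposes of the proposition by citing Dudley's real-valued extension lemma together with a Borel isomorphism theorem (its Theorem 4.2.5 and Proposition 4.2.6), and you have unfolded precisely these two ingredients into a self-contained argument, including the correct observation that one cannot naively extend by a constant off $E$ because $E$ need not lie in $\mathcal{F}$.

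There is, however, a slip in the real-valued step. The identity
\[
\{\tilde f \leq s\} = \bigcap_{n \geq 1} \bigcup_{q \in \mathbb{Q} \cap (s,\, s + 1/n)} A_q
\]
is not valid in general, because the sets $A_q$ need not be nested in $q$. Concretely, if $\omega \in A_0$ while $\omega \notin A_q$ for every rational $q \neq 0$, then $\tilde f(\omega) = 0$ so $\omega \in \{\tilde f \leq 0\}$, yet $\omega$ lies in none of the sets $\bigcup_{q \in \mathbb{Q} \cap (0,1/n)} A_q$; the claimed equality therefore fails at $s = 0$. Two easy repairs are available. Either use the correct identity
\[
\{\tilde f \leq s\} = \bigcap_{n \geq 1} \bigcup_{q \in \mathbb{Q},\ q < s + 1/n} A_q,
\]
which is still a countable intersection of countable unions of members of $\mathcal{F}$ and hence lies in $\mathcal{F}$; or replace $A_q$ at the outset by $A_q' := \bigcup_{q' \in \mathbb{Q},\ q' \leq q} A_{q'}$, which again lies in $\mathcal{F}$, still satisfies $A_q' \cap E = \{f \leq q\}$, and is nondecreasing in $q$, after which your original formula holds verbatim. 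With either repair the remainder of the argument, including the Borel-isomorphism reduction of the general Polish case, is correct.
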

\begin{proof}
The proof is direct combining Theorem 4.2.5 and Proposition 4.2.6 in \cite{dudley}.
\end{proof}

Now let $(\mathcal Y, \mathcal M)$ be a measure space, $\mathcal X$ any set, and $\psi$ a function from $\mathcal X$ into $\mathcal Y$. Let $\psi^{-1}[\mathcal M]:= \{ \psi^{-1}(  M): \,\, M \in \mathcal M \}$. Then   $\psi^{-1}[\mathcal M]$ is a sigma algebra of subsets of $\mathcal X$.
\begin{Thm}\label{doob}
We are given a set $\mathcal X$, a measure space $(\mathcal Y, \mathcal M)$, and a function $\psi$ from $\mathcal X$ into $\mathcal Y$. If  a function $\ell$ on $\mathcal X$ with values in a Polish space $\mathcal H$   is  $\psi^{-1}[\mathcal M]$ measurable, then there exists an $\mathcal M$-measurable function $L$ on $\mathcal Y$ such that $\ell=L \circ \psi$.
\end{Thm}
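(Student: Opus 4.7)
The plan is to factor $\ell$ through the image $E := \psi(\mathcal X) \subseteq \mathcal Y$ and then invoke the just-proved Proposition~\ref{doob2} to extend the resulting function on $E$ to all of $\mathcal Y$. The crucial use of the assumption that $\mathcal H$ is Polish, specifically that it is second countable, enters in the argument that $\ell$ is constant on fibers of $\psi$; the rest is elementary.

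\emph{Step 1: $\ell$ is constant on fibers of $\psi$.} Suppose $\psi(x_1) = \psi(x_2)$. For any Borel set $B \subseteq \mathcal H$, the $\psi^{-1}[\mathcal M]$-measurability of $\ell$ gives $\ell^{-1}(B) = \psi^{-1}(M_B)$ for some $M_B \in \mathcal M$, and $\psi(x_1) = \psi(x_2)$ then forces either both or neither of $x_1, x_2$ to lie in $\psi^{-1}(M_B)$; that is, $\ell(x_1) \in B \iff \ell(x_2) \in B$. Fixing a countable dense sequence $\{h_k\}_{k \geq 1}$ in $\mathcal H$ and varying $B$ over the balls $B(h_k, 1/n)$, $k, n \geq 1$, which form a countable family separating points of $\mathcal H$, we conclude $\ell(x_1) = \ell(x_2)$.

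\emph{Step 2: Definition of $L$ on $E$ and extension.} By Step 1 we may unambiguously define $\tilde L : E \to \mathcal H$ by $\tilde L(y) := \ell(x)$ for any (equivalently, every) $x \in \psi^{-1}(y)$. Measurability of $\tilde L$ with respect to the relative $\sigma$-algebra $\mathcal M_E$ is then immediate: for any Borel $B \subseteq \mathcal H$, $\tilde L^{-1}(B) = M_B \cap E$ with $M_B$ as in Step 1. We next apply Proposition~\ref{doob2} (with $\Omega = \mathcal Y$, $\mathcal F = \mathcal M$, $E = \psi(\mathcal X)$, $f = \tilde L$) to obtain an $\mathcal M$-measurable extension $L : \mathcal Y \to \mathcal H$ of $\tilde L$. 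By the very construction of $\tilde L$, $L(\psi(x)) = \tilde L(\psi(x)) = \ell(x)$ for every $x \in \mathcal X$, yielding $\ell = L \circ \psi$ as claimed.

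The main (and essentially only) technical point is Step 1: without separability of $\mathcal H$, fibers of $\psi$ need not be level sets of $\ell$ and the factorization breaks down at the set-theoretic level, so no factorization $\ell = L \circ \psi$ can exist regardless of measurability. Once fiber-constancy has been secured by the countable point-separating family, Step 2 is a direct appeal to Proposition~\ref{doob2} and needs no further input.
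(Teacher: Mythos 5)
Your proof is correct and takes essentially the same route as the paper: show $\ell$ is constant on fibers of $\psi$, factor through the image $E=\psi(\mathcal X)$, verify $\mathcal M_E$-measurability of the factor, and extend to all of $\mathcal Y$ via Proposition~\ref{doob2}. The only inaccuracy is your closing remark that separability of $\mathcal H$ is ``essential'' for Step~1 and that ``the factorization breaks down at the set-theoretic level'' without it: this is not so. The paper's argument for fiber-constancy needs no countable separating family --- if $\psi(x_1)=\psi(x_2)$ but $\ell(x_1)\neq\ell(x_2)$, one simply picks a Borel set $B$ containing $\ell(x_1)$ but not $\ell(x_2)$ (e.g.\ $B=\{\ell(x_1)\}$, which is Borel in any metric, indeed Hausdorff, space); then $\ell^{-1}(B)=\psi^{-1}(M_B)$ contains $x_1$ but not $x_2$, contradicting $\psi(x_1)=\psi(x_2)$. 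The Polish hypothesis on $\mathcal H$ is instead used precisely in your Step~2, through Proposition~\ref{doob2}, whose extension of $\mathcal M_E$-measurable maps to $\mathcal M$-measurable maps rests on Dudley's results requiring a Polish target. Your countable-ball argument is valid but overcomplicates the point and misattributes where the hypothesis is actually doing work.
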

\begin{proof}
Whenever $\psi(u)= \psi(v)$, we have $\ell(u)=\ell(v)$, for if not, let $B$ be a Borel set in $\mathcal H$ with $\ell(u)\in  B$ but $\ell(v) \notin   B$. Then $\ell^{-1} (B)=\psi^{-1} (C)$ for some $C\in \mathcal M$, with $\psi(u)\in C$ but $\psi(v)\notin C$, a contradiction. Thus, $\ell=L \circ \psi$ for some function $L$ from $D:=$ range $\psi$ into $\mathcal H$. For any Borel set $E \subset \mathcal H$, $\psi^{-1} (L^{-1} (E)) = \ell^{-1} (E)= \psi^{-1}(F)$ for some $F\in \mathcal M$, so $F\cap D= L^{-1} (E)$ and $L$ is $\mathcal M_D$ measurable. By Proposition \ref{doob2}, $L$ has a $\mathcal M$-measurable extension to all of $\mathcal Y$.
\end{proof}

\subsection{A Measurable Selection Theorem}
\label{sec:LetsJustBeinMeasurableOK?}

We turn now to restate the  \textit{measurable selection theorem} which was proven in \cite{BensoussanTemam}
and is based on the earlier works \cite{KuratowskiRyllNardzewski1965}, \cite{Castaing1967}.
We employed this result above to establish the existence
of adapted solutions of \eqref{eq:EulerSemi} in Proposition~\ref{thm:ExistenceSelectionThm}.

Firstly we recall the definition of a Radon measure. Let $X$ be a locally compact Hausdorff spaces
 and $\mathcal B (X) $ be the Borel sigma algebra on $X$. A Radon measure on $X$ is a measure defined
  on $\mathcal B (X) $ that is finite on all compact sets, outer regular on all Borel sets, and inner regular on all open sets (Page 212, \cite{Folland1}).
\begin{Thm}\label{thm:MeasureSel}
Let $X$ and $Y$ be separable Banach spaces and suppose that
$\Lambda$ is a `multivalued map' from $X$ into $Y$ i.e. a map from
$X$ into the  subsets of $Y$.  We assume that
$\Lambda$ takes values in closed, non-empty subsets of $Y$
and that its graph is closed viz.
$$
  \textrm{if } x_{n} \rightarrow x
  \textrm{ in $X$, and } y_{n} \rightarrow y
  \textrm{ in $Y$,  with } y_{n} \in \Lambda x_{n},
  \textrm{ then } y \in \Lambda x.
$$
Then, $\Lambda$ admits a \emph{universal Radon measurable section}, $\Gamma$,
that is there exists a map $\Gamma: X \rightarrow Y$ such that
$\Gamma x \in \Lambda x$ for every $x$, and such that $\Gamma$ is  Radon measurable for every Radon measure on $X$.

\end{Thm}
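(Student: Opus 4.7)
My approach would follow the classical Kuratowski--Ryll-Nardzewski / Castaing paradigm, adapted to the closed-graph hypothesis at hand. Since $X$ and $Y$ are separable Banach spaces, both are Polish, and by assumption the graph $\mathrm{Gr}(\Lambda) := \{(x,y) \in X \times Y : y \in \Lambda x\}$ is a closed (hence Borel) subset of $X \times Y$. The plan rests on identifying a family of universally Radon measurable sets that parametrize $\Lambda$, and then using them to construct $\Gamma$ by successive refinement.

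First, I would fix a countable dense sequence $\{y_k\}_{k \geq 1} \subset Y$ and examine, for each $k \geq 1$ and each rational $r > 0$, the set
\[
A(k,r) := \{ x \in X : \Lambda x \cap B(y_k,r) \neq \emptyset \} = \pi_X\bigl( \mathrm{Gr}(\Lambda) \cap (X \times B(y_k,r)) \bigr).
\]
Since $\mathrm{Gr}(\Lambda) \cap (X \times B(y_k,r))$ is the intersection of a closed set and an open set, it is Borel; its projection onto $X$ is therefore analytic, hence universally Radon measurable. This measurable projection fact is the technical keystone of the argument.

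With the $A(k,r)$'s in hand, I would build $\Gamma$ by an inductive refinement. At step $n$, for each $x$ let $k_n(x)$ be the smallest index for which $y_{k_n(x)}$ lies within distance $2^{-n}$ of $\Lambda x$ and within distance $2^{-n+1}$ of $y_{k_{n-1}(x)}$; existence of such an index follows from density of $\{y_k\}$ together with nonemptiness of $\Lambda x$. Each map $x \mapsto k_n(x)$ is universally Radon measurable because its level sets are countable Boolean combinations of $A(k,r)$'s. The sequence $\{y_{k_n(x)}\}_{n \geq 0}$ is Cauchy in $Y$, and I would define $\Gamma(x) := \lim_n y_{k_n(x)}$. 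Using the closedness of $\Lambda x$ together with $d(y_{k_n(x)}, \Lambda x) \to 0$ gives $\Gamma(x) \in \Lambda x$, and $\Gamma$ is universally Radon measurable as a pointwise limit of universally Radon measurable maps.

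The main obstacle is the non-elementary descriptive-set-theoretic input asserting that the projection of a Borel subset of $X \times Y$ onto $X$ is universally Radon measurable (equivalently, that analytic sets are Radon measurable, essentially Choquet's capacitability theorem). Once this is granted, the refinement procedure is essentially routine, although one must verify carefully at each step that the ``minimum index'' choice preserves universal measurability. The closed-graph hypothesis (as opposed to the stronger assumption of compact values, which would yield upper semicontinuity of $\Lambda$) is not a real obstacle: it is already enough to ensure that $\mathrm{Gr}(\Lambda) \cap (X \times B(y_k,r))$ is Borel and that $\Lambda x$ is closed so that the limit $\Gamma(x)$ indeed lies in $\Lambda x$.
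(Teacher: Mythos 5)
The paper does not supply its own proof of this statement; it restates the result as proved in Bensoussan and Temam, which in turn builds on the classical selection theorems of Kuratowski--Ryll-Nardzewski and Castaing. Your reconstruction follows that same lineage correctly: the closed-graph hypothesis yields analyticity of the sets $\{x : \Lambda x \cap B(y_k,r) \neq \emptyset\}$ as projections of Borel sets, analytic sets are universally Radon measurable, and the standard KRN refinement (with the inductive measurability of the index maps $k_n$ handled via countable Boolean combinations, and the strict inequalities in the defining conditions giving the room needed for the inductive existence of the next index) produces a universally Radon measurable selection landing in the closed sets $\Lambda x$.
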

\begin{Rmk}
Note that since $X$ is a separable Banach space, any probability measure on $X$ is Radon; this is because any separable Banach space is a Polish space (separable and complete metric space) and that every Polish space is a Radon space (A Hausdorff space $X$ is called a Radon space if every finite Borel measure on $X$ is a Radon measure, i.e. is inner regular (see \cite{Schwartz}).
\end{Rmk}

The following results are from \cite{Schwartz} and \cite{dunfordschwartz}. The final  goal is to   establish Corollary \ref{cor:cont} below,  which we have employed
in   the article to  prove that the map $\chi  $  defined in (\ref{eq:unn1}) (Section \ref{sec:Construction})   is    universally Radon measurable.
 For that purpose, we need the to introduce the following results (Proposition \ref{lem:lusincontiunous} to Theorem \ref{lem:composition}).

\begin{Def}\label{Def:Lusin}(Lusin $\mu$-measurable)
 Let $X$ be a topological space. Let $\mu$ be a Radon measure on    $X$   and let $h$ maps $X$ into $ Y$ where $Y$ is a Hausdorff topological space. Then the mapping $h$ is said to be Lusin $\mu$-measurable if, for every compact set $K\subset X$ and every $\delta >0$, there exists a compact set $K_{\delta} \subset K$ with $\mu (K-K_{\delta}) \leq \delta $   such that $h$ restricted to $K_{\delta}$ is continuous.
\end{Def}
\begin{Prop}\label{lem:lusincontiunous}
A function whose restriction to every compact set is continuous, is Lusin measurable for every Radon measure (Page 25, \cite{Schwartz}).
\end{Prop}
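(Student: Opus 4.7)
The plan is to verify this directly from Definition~\ref{Def:Lusin}, since the hypothesis almost tautologically supplies the conclusion. Fix an arbitrary Radon measure $\mu$ on $X$ and a map $h : X \to Y$ whose restriction to every compact subset of $X$ is continuous. To check that $h$ is Lusin $\mu$-measurable, I have to produce, for each compact $K \subset X$ and each $\delta > 0$, a compact subset $K_\delta \subset K$ with $\mu(K \setminus K_\delta) \leq \delta$ such that $h|_{K_\delta}$ is continuous.

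My first (and really only) step would be the obvious choice $K_\delta := K$. This is legitimate: $K_\delta$ is compact because $K$ is, the measure condition is trivial since $\mu(K \setminus K) = \mu(\emptyset) = 0 \leq \delta$, and the continuity of $h|_{K_\delta} = h|_{K}$ is exactly the standing hypothesis. No appeal to the regularity properties of $\mu$, nor to any approximation of $K$ by smaller compacts, is required.

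Because this choice works for every compact $K$ and every $\delta$, Definition~\ref{Def:Lusin} is satisfied, and $h$ is Lusin $\mu$-measurable; since $\mu$ was an arbitrary Radon measure, $h$ is Lusin measurable for every Radon measure on $X$. There is essentially no obstacle to overcome here: the content of the statement lies entirely in the definition, and the proposition just records that continuity on all compact sets is already a (strictly stronger than) Lusin-measurability condition.
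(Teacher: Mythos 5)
Your proof is correct: taking $K_\delta = K$ trivially satisfies every clause of Definition~\ref{Def:Lusin}, since $\mu(K\setminus K) = 0 \leq \delta$ and $h|_K$ is continuous by hypothesis. The paper does not supply a proof of this proposition (it simply cites Schwartz, p.~25), and your argument is the immediate verification one would expect; the hypothesis is strictly stronger than Lusin measurability, as you observe.
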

\begin{Prop}\label{lem:lusin}
The assumptions are the same as in Definition \ref{Def:Lusin}.
If $h: \,X \rightarrow Y$ is Lusin $\mu$-measurable, then $h$ is  $\mu$-measurable, and conversely, if $Y$ is metrizable and separable, then
every  $\mu$-measurable function is also Lusin   $\mu$-measurable (Page 26, \cite{Schwartz}).
\end{Prop}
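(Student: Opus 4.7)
The statement consists of two independent implications, which I would handle in turn.

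For the forward direction (Lusin $\mu$-measurable implies $\mu$-measurable), the plan is as follows. Since $\mu$ is Radon, one may exhaust $X$ up to a $\mu$-null set by an increasing sequence of compact sets $K_n$ with $\mu(X \setminus K_n) \to 0$. Applying the Lusin hypothesis to each $K_n$ with tolerance $1/n$ produces a compact $L_n \subset K_n$ on which $h$ is continuous, with $\mu(K_n \setminus L_n) \leq 1/n$; after replacing $L_n$ by $L_1 \cup \cdots \cup L_n$ (using the gluing lemma for continuity on a finite union of closed sets) we may assume the $L_n$ are nested upward. Then $M := \bigcup_n L_n$ satisfies $\mu(X \setminus M) \leq \mu(X \setminus K_m) + 1/m$ for every $m$, so $\mu(X \setminus M) = 0$. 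For any Borel $B \subset Y$, continuity of $h|_{L_n}$ makes $h^{-1}(B) \cap L_n$ a Borel subset of $X$, hence $h^{-1}(B) \cap M$ is Borel; and $h^{-1}(B)$ differs from this Borel set only within the $\mu$-null set $X \setminus M$, proving $\mu$-measurability.

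For the converse (the classical Lusin theorem), metrizability and separability of $Y$ supply a countable base $\{V_k\}_{k \geq 1}$ for the topology. Given any compact $K \subset X$ and $\delta > 0$, the $\mu$-measurability of $h$ renders each $h^{-1}(V_k) \cap K$ measurable, and inner regularity of $\mu$ then furnishes disjoint compact sets $F_k \subset h^{-1}(V_k) \cap K$ and $F_k' \subset K \setminus h^{-1}(V_k)$ with $\mu(K \setminus (F_k \cup F_k')) < \delta\, 2^{-k}$. Set $K_\delta := \bigcap_{k \geq 1}(F_k \cup F_k')$, a closed (hence compact) subset of $K$. Countable subadditivity yields $\mu(K \setminus K_\delta) < \delta$. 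The key observation is that for $x \in K_\delta$ with $h(x) \in V_k$, since $x$ must lie in $F_k \cup F_k'$ and $F_k'$ is disjoint from $h^{-1}(V_k)$, necessarily $x \in F_k$. Therefore $(h|_{K_\delta})^{-1}(V_k) = F_k \cap K_\delta$ is relatively open in $K_\delta$ (its complement $F_k' \cap K_\delta$ is closed), and since $\{V_k\}$ generates the topology of $Y$, $h|_{K_\delta}$ is continuous.

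The genuine obstacle is the converse implication. The forward direction is largely a bookkeeping exercise assembling the Lusin witnesses. By contrast, the converse requires producing a single compact set $K_\delta$ on which $h$ is continuous for all open subsets of $Y$ simultaneously; this is feasible only because separability reduces the continuity requirement to a countable family of basis preimages, whose individual compact-approximation errors $\delta\, 2^{-k}$ can be summed to stay below $\delta$. Without separability, there is no general way to guarantee such a $K_\delta$.
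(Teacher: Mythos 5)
The paper offers no proof of this proposition---it is quoted directly from Schwartz's book (page~26)---so there is no in-text argument to compare against. Your proof is the standard Lusin-theorem construction and it is correct in the setting the paper actually uses (Radon probability measures on separable metric spaces, where tightness holds). Two small points are worth flagging. First, the global exhaustion of $X$ by compacts up to a $\mu$-null set in the forward direction requires $\mu$ to be finite (equivalently, tight); this holds here since $\mu$ is a probability measure on a Polish space, but it is not a consequence of the Radon property alone. For general, possibly infinite, Radon measures the usual formulation of $\mu$-measurability is local---on each compact separately---and the same per-$K_n$ bookkeeping then gives the implication with no global exhaustion needed, so nothing is lost, but it deserves mention. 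Second, in the converse you invoke inner regularity of $\mu$ on the $\mu$-measurable sets $h^{-1}(V_k)\cap K$ and $K\setminus h^{-1}(V_k)$, whereas the definition cited in the paper postulates inner regularity only on open sets. This is legitimate because a Radon measure is inner regular on every $\sigma$-finite measurable set (e.g.\ Folland, Prop.~7.5), and everything here lives inside the compact $K$ of finite measure, but the step should be acknowledged rather than treated as immediate. With those caveats noted, your argument is sound, and your diagnosis---that the converse is the genuinely nontrivial direction, with separability of $Y$ the indispensable ingredient that makes the countable intersection $K_\delta=\bigcap_k(F_k\cup F_k')$ controllable---is exactly right.
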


\begin{Thm}\label{lem:composition}
Let $X$, $Y$ and $Z$ to be separable Banach spaces and $\mu$ be a Radon measure on $X$.  Let $\varphi:X \rightarrow Y$ be a $\mu$-measurable mapping. Let $\Gamma : Y \rightarrow Z$ be universally Radon measurable.
 Then $G:=\Gamma \circ \varphi$ is  $\mu$-measurable on $X$.
\end{Thm}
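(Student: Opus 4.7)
\textbf{Proof plan for Theorem~\ref{lem:composition}.} The strategy is to show that $G = \Gamma \circ \varphi$ is Lusin $\mu$-measurable; by Proposition~\ref{lem:lusin} (using that $Z$ is a separable Banach space, hence metrizable and separable), this implies $\mu$-measurability. So fix a compact set $K \subset X$ and $\epsilon > 0$, and the goal is to produce a compact $K_2 \subset K$ with $\mu(K \setminus K_2) < \epsilon$ on which $G$ is continuous.

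First, since $\varphi$ is $\mu$-measurable and $Y$ is separable metric, Proposition~\ref{lem:lusin} gives that $\varphi$ is Lusin $\mu$-measurable. Thus we can pick a compact $K_1 \subset K$ with $\mu(K \setminus K_1) < \epsilon/2$ and $\varphi|_{K_1}$ continuous. Let $L := \varphi(K_1)$, a compact subset of $Y$, and define the pushforward measure $\nu := (\varphi|_{K_1})_{\ast}(\mu|_{K_1})$. Since $\nu$ is a finite Borel measure on the Polish space $Y$ (compactly supported on $L$), standard facts imply $\nu$ is a Radon measure on $Y$.

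Next, because $\Gamma$ is \emph{universally} Radon measurable, $\Gamma$ is in particular $\nu$-measurable; by Proposition~\ref{lem:lusin} again, $\Gamma$ is then Lusin $\nu$-measurable. Applied to the compact set $L$, this yields a compact $L' \subset L$ with $\nu(L \setminus L') < \epsilon/2$ and $\Gamma|_{L'}$ continuous. Now set
\begin{equation*}
K_2 := (\varphi|_{K_1})^{-1}(L').
\end{equation*}
By continuity of $\varphi|_{K_1}$, $K_2$ is closed in $K_1$ and therefore compact. By the definition of the pushforward,
\begin{equation*}
\mu(K_1 \setminus K_2)
  = \mu\bigl( (\varphi|_{K_1})^{-1}(L \setminus L') \bigr)
  = \nu(L \setminus L') < \epsilon/2,
\end{equation*}
so $\mu(K \setminus K_2) < \epsilon$. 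On $K_2$ the map $\varphi$ is continuous with image in $L'$, and $\Gamma$ is continuous on $L'$, so $G = \Gamma \circ \varphi$ is continuous on $K_2$. Hence $G$ is Lusin $\mu$-measurable, and one more application of Proposition~\ref{lem:lusin} completes the proof.

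The only real subtlety — and the step I expect to be the main obstacle to write cleanly — is justifying that the pushforward $\nu$ is a Radon measure on $Y$, because Theorem~\ref{thm:MeasureSel}/the universal Radon measurability hypothesis on $\Gamma$ can only be invoked once a Radon measure on $Y$ is in hand. Fortunately, since $\nu$ is a finite Borel measure concentrated on a compact set in a Polish space, this follows from the standard inner/outer regularity theorem for Borel measures on Polish spaces; no appeal to deeper tightness arguments is needed.
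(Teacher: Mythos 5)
Your proof is correct and follows essentially the same route as the paper: the paper invokes Proposition~\ref{lem:lusin} to pass to Lusin $\mu$-measurability of $\varphi$ and then defers to the proof of Theorem 3.2 in \cite{BensoussanTemam}, which is exactly the pushforward-measure argument you have written out in full (including the point that a finite Borel measure on the Polish space $Y$ is automatically Radon, so universal Radon measurability of $\Gamma$ applies). You have simply supplied the details that the paper elides by citation.
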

\begin{proof}
From Proposition \ref{lem:lusin}, $\varphi$ is Lusin $\mu$-measurable. Then Theorem \ref{lem:composition} follows from  the proof of Theorem 3.2 in \cite{BensoussanTemam}.
\end{proof}

\begin{Cor}\label{cor:cont}
Let $X$, $Y$ and $Z$ to be separable Banach spaces and $\varphi:X \rightarrow Y$ be a continuous mapping. Let $\Gamma : Y \rightarrow Z$ be universally Radon measurable. Then $G:=\Gamma \circ  \varphi$ is universally Radon measurable.
\end{Cor}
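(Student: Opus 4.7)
The plan is to show that the corollary follows essentially immediately from Theorem~\ref{lem:composition} by verifying that the hypothesis of $\mu$-measurability on $\varphi$ is automatic for continuous maps. Concretely, I would fix an arbitrary Radon measure $\mu$ on $X$ and then check that $\varphi$ is $\mu$-measurable; since $\varphi$ is continuous, its restriction to every compact subset of $X$ is continuous, so by Proposition~\ref{lem:lusincontiunous} it is Lusin $\mu$-measurable, and then by Proposition~\ref{lem:lusin} (applied in the Lusin-measurable $\Rightarrow$ $\mu$-measurable direction, or alternatively, just noting that a continuous map between separable metric spaces is Borel, hence $\mu$-measurable) it is $\mu$-measurable.

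Having $\varphi$ $\mu$-measurable and $\Gamma$ universally Radon measurable, I would then invoke Theorem~\ref{lem:composition} directly to conclude that $G = \Gamma \circ \varphi$ is $\mu$-measurable on $X$. Since $\mu$ was an arbitrary Radon measure on $X$, this gives that $G$ is Radon measurable with respect to every Radon measure on $X$, which is precisely the definition of universal Radon measurability.

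I do not anticipate any genuine obstacle here: the corollary is really just the specialization of Theorem~\ref{lem:composition} to the case where $\varphi$ is continuous, combined with the quantification over all Radon measures. The only point worth stating explicitly in the write-up is the passage from continuity of $\varphi$ to $\mu$-measurability of $\varphi$ for an arbitrary Radon measure $\mu$, since Theorem~\ref{lem:composition} is formulated with a fixed $\mu$ in mind; once this is observed, the rest is a one-line application of the theorem followed by taking the supremum over all $\mu$.
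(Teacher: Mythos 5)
Your proof is correct and follows essentially the same route as the paper's: the paper's proof simply states that the corollary is ``directly deduced from Proposition~\ref{lem:lusincontiunous} and Theorem~\ref{lem:composition},'' and your write-up supplies exactly the intended details (continuity $\Rightarrow$ Lusin $\mu$-measurability for any Radon $\mu$ $\Rightarrow$ $\mu$-measurability, then apply Theorem~\ref{lem:composition} and quantify over all Radon $\mu$).
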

\begin{proof}
This can be directly deduced from Proposition \ref{lem:lusincontiunous} and Theorem \ref{lem:composition}.
\end{proof}
\subsection{Compact Embedding Results}
\label{sec:CompactEmbRes}
In order to establish the compactness of a sequence of probability measures
associated with the solutions of \eqref{eq:EulerSemiFnRep} we made
use of the following compact embedding theorem which is close to that found in
\cite{Temamfirst} and of course generalizes the classical Aubin-Lions Compactness theorem
(see \cite{Aubin})

\begin{Prop}\label{thm:MotherOfAllCompEmbeddings}
Let $Z \subset \subset Y \subset X$ be a collection of three Banach spaces
with $Z$ compactly embedded in $Y$ and $Y$ continuously embedded in
$X$.
\begin{itemize}
\item[(i)] Suppose that $\mathfrak{G}$ is a bounded subset of
$L^p(\mathbb{R}, Z) \cap L^{\infty}(\mathbb{R}, Y)$, where $1 < p \leq \infty$, and assume that
for some $1 < q < \infty$
\begin{align}
	\int_{-\infty}^{\infty} |g(t + s) - g(s)|^{q}_{X} ds \rightarrow 0
	\quad \textrm{ as } t \rightarrow 0,
	\label{eq:UniformTmShiftConv}
\end{align}
uniformly for $g \in \mathfrak{G}$ and that there exists $L > 0$ such that
\begin{align}
	\mbox{supp}\{g\} \subset [-L, L], \textrm{ for every } g \in \mathfrak{G}.
	\label{eq:BndSupp}
\end{align}
Then, the set $\mathfrak{G}$ is relatively compact in $L^{p}(\mathbb{R}, Y)$.
\item[(ii)]  For $T > 0$ if $\mathfrak{G}$ is a bounded subset of
$L^p( 0, T , Z) \cap L^{\infty}( 0, T , Y)$ and
\begin{align}
	\int_{0}^{T-a} |g(t + s) - g(s)|^{q}_{X} ds \rightarrow 0
	\quad \textrm{ as } t \rightarrow 0,
	\label{eq:UniformTmShiftConvFixedT}
\end{align}
uniformly for elements in $\mathfrak{G}$, then $\mathfrak{G}$ is relatively compact in $L^{p}( 0, T , Y)$.
\end{itemize}
\end{Prop}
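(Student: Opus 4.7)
The plan is to follow the classical Aubin--Lions--Simon strategy: use the compact embedding $Z \subset\subset Y$ to reduce compactness in $L^p(\mathbb{R},Y)$ to compactness in the weaker space $L^p(\mathbb{R},X)$, and then apply the Fr\'echet--Kolmogorov criterion in $L^p(\mathbb{R},X)$, for which the bounded-support hypothesis \eqref{eq:BndSupp} and the time-shift hypothesis \eqref{eq:UniformTmShiftConv} provide precisely the three ingredients needed.

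For part (i), the first step is an Ehrling-type interpolation: since $Z \subset\subset Y \hookrightarrow X$, a standard compactness-by-contradiction argument gives, for every $\eta > 0$, a constant $C_\eta > 0$ such that $\|u\|_Y \leq \eta \|u\|_Z + C_\eta \|u\|_X$ for every $u \in Z$. Raising to the $p$-th power and integrating in time yields
\begin{equation*}
\|g - \tilde g\|_{L^p(\mathbb{R},Y)} \leq c\,\eta\bigl(\|g\|_{L^p(\mathbb{R},Z)} + \|\tilde g\|_{L^p(\mathbb{R},Z)}\bigr) + c\,C_\eta\, \|g - \tilde g\|_{L^p(\mathbb{R},X)}
\end{equation*}
for any $g,\tilde g \in \mathfrak{G}$. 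Since $\mathfrak{G}$ is uniformly bounded in $L^p(\mathbb{R},Z)$, choosing $\eta$ small shows that any sequence Cauchy in $L^p(\mathbb{R},X)$ is Cauchy in $L^p(\mathbb{R},Y)$, so it suffices to establish relative compactness of $\mathfrak{G}$ in $L^p(\mathbb{R},X)$.

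The second step is to verify the three hypotheses of Fr\'echet--Kolmogorov for $\mathfrak{G}$ regarded as a subset of $L^p(\mathbb{R},X)$: (a) uniform $L^p$-boundedness, which follows from \eqref{eq:BndSupp} and the continuous embedding $L^\infty(\mathbb{R},Y) \hookrightarrow L^\infty(\mathbb{R},X)$; (b) uniform smallness of tails, which is trivial from \eqref{eq:BndSupp}; and (c) uniform equicontinuity under translations in $L^p(\mathbb{R},X)$. To extract (c) from the $L^q$ hypothesis \eqref{eq:UniformTmShiftConv}, I interpolate against the uniform $L^\infty(\mathbb{R},X)$ bound: when $p \geq q$, I write $|g(\cdot+t)-g(\cdot)|_X^p = |g(\cdot+t)-g(\cdot)|_X^{p-q}\,|g(\cdot+t)-g(\cdot)|_X^q$ and bound the first factor pointwise; when $p < q$, I use H\"older's inequality on the set $[-L{-}|t|,L{+}|t|]$ which contains the support of $g(\cdot+t)-g(\cdot)$. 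Either way, \eqref{eq:UniformTmShiftConv} transfers uniformly over $\mathfrak{G}$, and Fr\'echet--Kolmogorov delivers relative compactness in $L^p(\mathbb{R},X)$, which combined with the Ehrling step completes (i).

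For part (ii), I would extend each $g \in \mathfrak{G}$ by zero outside $(0,T)$ to produce a bounded family in $L^p(\mathbb{R},Z) \cap L^\infty(\mathbb{R},Y)$ with uniform support in $[0,T]$, and verify that \eqref{eq:UniformTmShiftConvFixedT} (read with $a = t$) implies \eqref{eq:UniformTmShiftConv} for the extension: the only new contribution comes from the two boundary strips of length $|t|$, on which the integrand is bounded by $2\sup_{g \in \mathfrak{G}} \|g\|_{L^\infty(\mathbb{R},Y)}^q$ (via $Y \hookrightarrow X$), and this contribution vanishes with $t$. Part (i) then yields relative compactness in $L^p(\mathbb{R},Y)$, and restriction to $(0,T)$ gives the conclusion. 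The main technical point is the careful interpolation in step (c), which must cover both cases $p \geq q$ and $p < q$ uniformly over $\mathfrak{G}$; the Ehrling inequality and the boundary analysis in the zero extension are standard.
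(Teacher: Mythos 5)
Your overall architecture (Ehrling interpolation to pass from $X$ to $Y$, then a translation-compactness criterion in $L^p(\mathbb{R},X)$) is close in spirit to the paper's, but the central compactness step has a genuine gap. The three conditions you list for Fr\'echet--Kolmogorov --- uniform $L^p$-boundedness, tail decay, and equicontinuity of translations --- characterize relative compactness only for \emph{scalar-valued} $L^p$. For $L^p(\mathbb{R};X)$ with $X$ infinite-dimensional they are not sufficient: take $g_n = \indFn{[0,1]}\, e_n$ with $\{e_n\}$ an orthonormal basis of a Hilbert space $X$; this family is bounded, supported in $[0,1]$, and satisfies $\|g_n(\cdot+t)-g_n\|_{L^p(X)}^p = 2|t|$ uniformly in $n$, yet has no convergent subsequence. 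The vector-valued Kolmogorov--Riesz theorem requires a fourth hypothesis, namely that the local averages $\bigl\{\int_I g\,ds : g\in\mathfrak{G}\bigr\}$ be relatively compact in $X$ for every bounded interval $I$. In your setting this extra condition does hold --- by H\"older the averages lie in a bounded subset of $Z$, which is compact in $Y\subset X$ --- but you never invoke it, and without it the step ``Fr\'echet--Kolmogorov delivers relative compactness in $L^p(\mathbb{R},X)$'' is exactly where the compact embedding $Z\subset\subset Y$ must enter and does not.

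This is precisely the role played by the averaging operator in the paper's proof: there one sets $(J_a g)(s) = \frac{1}{2a}\int_{s-a}^{s+a} g\,dt$, shows that $\mathfrak{G}_a = J_a\mathfrak{G}$ is relatively compact in $L^p(\mathbb{R},Y)$ (the values of $J_a g$ lie in a fixed bounded, hence precompact, subset of $Y$, and an Arzel\`a--Ascoli argument applies), and then controls $J_a g - g$ uniformly via the Ehrling lemma together with the interpolation $\|h\|_{L^p(X)} \le \|h\|_{L^\infty(X)}^{(p-q)/p}\|h\|_{L^q(X)}^{q/p}$ --- the same interpolation you use in your step (c). So your proof is repairable by either adding the local-average compactness condition to your Kolmogorov criterion or by mollifying first as the paper does; your Ehrling reduction, your $p\gtrless q$ interpolation, and your zero-extension treatment of part (ii) are all sound.
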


\begin{proof}
The proof is a fairly straightforward generalization of \cite[Theorem 13.2]{Temam4}.
Observe that if $q > p$ then \eqref{eq:UniformTmShiftConv} and \eqref{eq:BndSupp}
taken together imply that
\begin{align*}
	\int_{-\infty}^{\infty} |g(t + s) - g(s)|^{p}_{X} ds \rightarrow 0
	\quad \textrm{ as } t \rightarrow 0,
\end{align*}
uniformly for $g \in \mathfrak{G}$.  Therefore there is no loss of generality in supposing
that $q \leq p$ in what follows.

For $a >0$, define the \emph{averaging operator} $J_{a}$ according to
\begin{align*}
	(J_{a}f)(s) = \frac{1}{2a} \int_{s-a}^{s+a} f(t) dt = \frac{1}{2a}\int_{-a}^{a} f(s+t) dt.
\end{align*}
We take $\mathfrak{G}_{a} = \{ J_{a} g: g \in \mathfrak{G}\}$.
Arguing exactly as in \cite{Temamfirst} we have, for $a > 0$, that
$\mathfrak{G}_{a}$ is relatively compact in $L^{p}(\mathbb{R}; Y)$.

To show that $\mathfrak{G}$ is itself relatively compact in $L^{p}(\mathbb{R}; Y)$,
we prove that it is a
\emph{totally bounded} subset of $L^{p}(\mathbb{R}; Y)$; in other words we prove that, for every $\epsilon >0$,
there exists finitely many elements $g_{1}, \ldots, g_{N}$ in $L^{p}(\mathbb{R}, Y)$
such that $\mathfrak{G}$ is contained in the union of the $\epsilon$ balls centered
at these points.

Again, arguing exactly as in \cite{Temamfirst} we have that, as a consequence of
\eqref{eq:UniformTmShiftConv}, for every $\delta >0$ there exists $a = a(\delta) > 0$ such that
\begin{align}
	|J_{a} g - g|_{L^{q}(\mathbb{R}, X)} \leq  \delta,
	\quad \textrm{ for every } g \in \mathfrak{G}.
	\label{eq:DelsEps1ForComp}
\end{align}
On the other hand, from  \cite[Chapter 3, Lemma 2.1]{Temam1}  we infer that, for every $\eta > 0$, there exists $C_{\eta} >0$ such that,
for every $g \in L^{p}(\mathbb{R}, Z)$
\begin{align}
	|J_{a} g - g|_{L^{p}(\mathbb{R}, Y)} \leq
	C_{\eta} |J_{a} g - g|_{L^{p}(\mathbb{R}, X)}  + \eta
	|J_{a} g - g|_{L^{p}(\mathbb{R}, Z)}
	\leq
	C_{\eta} |J_{a} g - g|_{L^{p}(\mathbb{R}, X)}  + 2\eta
	|g|_{L^{p}(\mathbb{R}, Z)}.
	\label{eq:DelsEps2ForComp}
\end{align}
The last inequality follows from the fact that, $|J_{a}f|_{L^{p}(\mathbb{R}, Z)}
\leq |f|_{L^{p}(\mathbb{R}, Z)}$, for all $f \in L^{p}(\mathbb{R}, Z)$.  Now,
on the other hand we have
\begin{align*}
 |J_{a} g - g|_{L^{p}(\mathbb{R}, X)}
 \leq |J_{a}g - g|_{L^{\infty}(\mathbb{R}, X)}^{(p-q)/p}  |J_{a} g - g|_{L^{q}(\mathbb{R}, X)}^{q/p}
 \leq  (2 |g|_{L^{\infty}(\mathbb{R}, X)})^{(p-q)/p} |J_{a} g - g|_{L^{q}(\mathbb{R}, X)}^{q/p}.
\end{align*}
So, for a constant $\kappa$ depending only on $\sup_{g \in \mathfrak{G}}|g|_{L^{\infty}(\mathbb{R}, Y)}$, $p$,
$q$ and the constant associated with the continuous embedding of $Y$ into $X$ we find
\begin{align}
 |J_{a} g - g|_{L^{p}(\mathbb{R}, X)} \leq
 \kappa |J_{a} g - g|_{L^{q}(\mathbb{R}, X)}^{q/p}.
	\label{eq:DelsEps3ForComp}
\end{align}
Fix $\epsilon > 0$. Let  $|f|_{L^{p}(\mathbb{R}, Z)}\leq \kappa,\,\, \forall f \in \mathfrak{G}$ and let  $\eta = \epsilon / (6 \kappa)$, and
pick $a >0$, sufficiently small, so that
\eqref{eq:DelsEps1ForComp} holds for $\delta := \left(\frac{\epsilon}{3 C_{\eta} \kappa}\right)^{p/q}$,
where $C_{\eta}$ is the constant corresponding to $\eta$ in \eqref{eq:DelsEps2ForComp}.
Using that $\mathfrak{G}_{a}$ is precompact in $L^{p}(\mathbb{R}, Y)$, we next
choose a finite collection $\mathfrak{F} = \{g_{1}, \ldots, g_{n}\} \subset \mathfrak{G}$, such that the $L^{p}(\mathbb{R}, Y)$
$\epsilon/3$-balls centered at $J_{a}g_{k}$ cover $\mathfrak{G}_{a}$.
Now, with these various choices, we have that for
any $g \in \mathfrak{G}$, there exists $g_{k} \in \mathfrak{F}$ such that $|J_{a} g_{k} - J_{a}g|_{L^{p}(\mathbb{R}, Y)} \leq \epsilon/3$.
As such, we  employ \eqref{eq:DelsEps2ForComp} with
$\eta = \epsilon / (6 \kappa)$, followed by
\eqref{eq:DelsEps3ForComp} and estimate
\begin{align*}
	|J_{a} g_{k} - g|_{L^{p}(\mathbb{R}, Y)}
	\leq&	|J_{a} g_{k} - J_{a}g|_{L^{p}(\mathbb{R}, Y)} + |J_{a} g - g|_{L^{p}(\mathbb{R}, Y)}
	\leq \frac{\epsilon}{3} + C_{\eta} |J_{a} g - g|_{L^{p}(\mathbb{R}, X)}  + 2\eta
		\sup_{f \in \mathfrak{G}}|f|_{L^{p}(\mathbb{R}, Z)}\\
	\leq& \frac{2\epsilon}{3} + C_{\eta}\kappa |J_{a} g - g|_{L^{q}(\mathbb{R}, X)}^{q/p}  	
		\leq \epsilon.
\end{align*}
Since, $\epsilon > 0$ was arbitrary to begin with, this shows
that $\mathfrak{G}$ is a totally bounded subset of $L^{p}(\mathbb{R}; Y)$
and we thus infer $(i)$.
The second item $(ii)$ follows directly from $(i)$ as in \cite{Temamfirst}.
The proof of Proposition~\ref{thm:MotherOfAllCompEmbeddings}
is therefore complete.
\end{proof}

\section*{Acknowledgments}
This work was partially supported by the National Science Foundation (NSF)
under the grants  DMS 1206438 and by the Research Fund of
Indiana University. NEGH work has been partially supported under the grants NSF-DMS-1004638 and NSF-DMS-1313272.
We have benefited from the hospitality of the Department of Mathematics Virginia Tech and
from the Newton Institute for Mathematical Sciences, University of Cambridge where the final stage
of the writing was completed. The authors wish to thank Arnaud Debussche
for his help with the use of the universally Radon measurable selection theorem.

\begin{footnotesize}
\bibliographystyle{amsalpha}
\bibliography{ref}

\providecommand{\bysame}{\leavevmode\hbox to3em{\hrulefill}\thinspace}
\providecommand{\MR}{\relax\ifhmode\unskip\space\fi MR }
\providecommand{\MRhref}[2]{%
  \href{http://www.ams.org/mathscinet-getitem?mr=#1}{#2}
}
\providecommand{\href}[2]{#2}
\begin{thebibliography}{DGHTZ12}

\bibitem[Arn74]{Arnaud1974}
L.~Arnold, \emph{Stochastic differential equations: theory and applications},
  Wiley-Interscience [John Wiley \& Sons], New York, 1974, Translated from the
  German. \MR{0443083 (56 \#1456)}

\bibitem[Aub72]{Aubin}
Jean-Pierre Aubin, \emph{Approximation of elliptic boundary-value problems},
  Wiley-Interscience [A division of John Wiley \& Sons, Inc.], New
  York-London-Sydney, 1972, Pure and Applied Mathematics, Vol. XXVI.
  \MR{0478662 (57 \#18139)}

\bibitem[Ben95]{Bensoussan1}
A.~Bensoussan, \emph{Stochastic {N}avier-{S}tokes equations}, Acta Appl. Math.
  \textbf{38} (1995), no.~3, 267--304. \MR{MR1326637 (96b:35243)}

\bibitem[Bil99]{Billingsley1}
Patrick Billingsley, \emph{Convergence of probability measures}, second ed.,
  Wiley Series in Probability and Statistics: Probability and Statistics, John
  Wiley \& Sons Inc., New York, 1999, A Wiley-Interscience Publication.
  \MR{MR1700749 (2000e:60008)}

\bibitem[Bje04]{Bjerknes1}
V.~Bjerknes, \emph{Das problem der wettervorhersage, betrachtet vom standpunkte
  der mechanik und der physik}, Meteorol. Z. \textbf{21} (1904), 1--7.

\bibitem[BSLP09]{BernerShuttsLeutbecherPalmer}
J.~Berner, G.~J. Shutts, M.~Leutbecher, and T.~N. Palmer, \emph{A spectral
  stochastic kinetic energy backscatter scheme and its impact on flow-dependent
  predictability in the ecmwf ensemble prediction system}, Journal of the
  Atmospheric Sciences \textbf{66} (2009), no.~3, 603--626.

\bibitem[BT73]{BensoussanTemam}
A.~Bensoussan and R.~Temam, \emph{\'{E}quations stochastiques du type
  {N}avier-{S}tokes}, J. Functional Analysis \textbf{13} (1973), 195--222.
  \MR{MR0348841 (50 \#1336)}

\bibitem[Cas67]{Castaing1967}
C.~Castaing, \emph{Sur les multi-applications mesurables}, Rev. Fran\c caise
  Informat. Recherche Op{\'e}rationnell \textbf{1} (1967), no.~1, 91--126.
  \MR{0223527 (36 \#6575)}

\bibitem[CM11]{ChueshovMillet2011}
I.~Chueshov and A.~Millet, \emph{Stochastic two-dimensional hydrodynamical
  systems: {W}ong-{Z}akai approximation and support theorem}, Stoch. Anal.
  Appl. \textbf{29} (2011), no.~4, 570--611. \MR{2812518}

\bibitem[CT07]{CaoTiti}
C.~Cao and E.~Titi, \emph{Global well-posedness of the three-dimensional
  viscous primitive equations of large scale ocean and atmosphere dynamics},
  Ann. of Math. (2) \textbf{166} (2007), no.~1, 245--267. \MR{MR2342696}

\bibitem[DBD04]{DeBouardDebussche2004}
A.~De~Bouard and A.~Debussche, \emph{A semi-discrete scheme for the stochastic
  nonlinear {S}chr{\"o}dinger equation}, Numer. Math. \textbf{96} (2004),
  no.~4, 733--770. \MR{2036364 (2004m:60134)}

\bibitem[DGHT11]{DebusscheGlattTemam1}
Arnaud Debussche, Nathan Glatt-Holtz, and Roger Temam, \emph{Local martingale
  and pathwise solutions for an abstract fluids model}, Phys. D \textbf{240}
  (2011), no.~14-15, 1123--1144. \MR{2812364}

\bibitem[DGHTZ12]{DebusscheGlattTemamZiane1}
A.~Debussche, N.~Glatt-Holtz, R.~Temam, and M.~Ziane, \emph{Global existence
  and regularity for the 3{D} stochastic primitive equations of the ocean and
  atmosphere with multiplicative white noise}, Nonlinearity \textbf{25} (2012),
  no.~7, 2093--2118. \MR{2947937}

\bibitem[DP06]{DebusschePrintems2006}
A.~Debussche and J.~Printems, \emph{Convergence of a semi-discrete scheme for
  the stochastic {K}orteweg-de {V}ries equation}, Discrete Contin. Dyn. Syst.
  Ser. B \textbf{6} (2006), no.~4, 761--781 (electronic). \MR{2223907
  (2007j:60103)}

\bibitem[DPD02]{DaPratoDebussche2003}
G.~Da~Prato and A.~Debussche, \emph{Two-dimensional {N}avier-{S}tokes equations
  driven by a space-time white noise}, J. Funct. Anal. \textbf{196} (2002),
  no.~1, 180--210. \MR{1941997 (2003h:35198)}

\bibitem[DPZ92]{ZabczykDaPrato1}
G.~Da~Prato and J.~Zabczyk, \emph{Stochastic equations in infinite dimensions},
  Encyclopedia of Mathematics and its Applications, vol.~44, Cambridge
  University Press, Cambridge, 1992. \MR{MR1207136 (95g:60073)}

\bibitem[DS88]{dunfordschwartz}
Nelson Dunford and Jacob~T. Schwartz, \emph{Linear operators. {P}art {I}},
  Wiley Classics Library, John Wiley \& Sons Inc., New York, 1988, General
  theory, With the assistance of William G. Bade and Robert G. Bartle, Reprint
  of the 1958 original, A Wiley-Interscience Publication. \MR{1009162
  (90g:47001a)}

\bibitem[Dud02]{dudley}
R.~M. Dudley, \emph{Real analysis and probability}, Cambridge Studies in
  Advanced Mathematics, vol.~74, Cambridge University Press, Cambridge, 2002,
  Revised reprint of the 1989 original. \MR{1932358 (2003h:60001)}

\bibitem[Dur10]{Durrett2010}
R.~Durrett, \emph{Probability: theory and examples}, fourth ed., Cambridge
  Series in Statistical and Probabilistic Mathematics, Cambridge University
  Press, Cambridge, 2010. \MR{2722836}

\bibitem[EP09]{EwaldPenland2}
B.~Ewald and C.~Penland, \emph{Numerical generation of stochastic differential
  equations in climate models}, Special Volume on Computational Methods for the
  Atmosphere and the Oceans, Handbook of Numerical Analysis, vol.~14,
  Elsevier/North-Holland, Amsterdam, 2009, pp.~279--306.

\bibitem[EPT07]{EwaldPetcuTemam}
B.~Ewald, M.~Petcu, and R.~Temam, \emph{Stochastic solutions of the
  two-dimensional primitive equations of the ocean and atmosphere with an
  additive noise}, Anal. Appl. (Singap.) \textbf{5} (2007), no.~2, 183--198.
  \MR{MR2311708 (2008j:35144)}

\bibitem[FG95]{FlandoliGatarek1}
F.~Flandoli and D.~Gatarek, \emph{Martingale and stationary solutions for
  stochastic {N}avier-{S}tokes equations}, Probab. Theory Related Fields
  \textbf{102} (1995), no.~3, 367--391. \MR{MR1339739 (96m:60137)}

\bibitem[Fol99]{Folland1}
G.~B. Folland, \emph{Real analysis}, second ed., Pure and Applied Mathematics
  (New York), John Wiley \& Sons Inc., New York, 1999, Modern techniques and
  their applications, A Wiley-Interscience Publication. \MR{MR1681462
  (2000c:00001)}

\bibitem[GH09]{GuoHuang2008}
B.~Guo and D.~Huang, \emph{3{D} stochastic primitive equations of the
  large-scale ocean: global well-posedness and attractors}, Comm. Math. Phys.
  \textbf{286} (2009), no.~2, 697--723. \MR{MR2472041}

\bibitem[GS96]{GreckschSchmalfuss1996}
W.~Grecksch and B.~Schmalfu{\ss}, \emph{Approximation of the stochastic
  {N}avier-{S}tokes equation}, Mat. Apl. Comput. \textbf{15} (1996), no.~3,
  227--239. \MR{1429550 (97k:60167)}

\bibitem[GT11a]{GlattTemam1}
N.~Glatt{-}Holtz and R.~Temam, \emph{Cauchy convergence schemes for some
  nonlinear partial differential equations}, Applicable Analysis \textbf{90}
  (2011), no.~1, 85 -- 102.

\bibitem[GT11b]{GlattTemam2}
\bysame, \emph{Pathwise solutions of the 2-d stochastic primitive equations},
  Applied Mathematics and Optimization \textbf{63} (2011), no.~3, 401--433(33).

\bibitem[GTT]{GlattTemamTribbia1}
N.~Glatt{-}Holtz, R.~Temam, and J.~Tribbia, \emph{Some remarks on the role of
  stochastic parameterization in the equations of the ocean and atmosphere},
  (manuscript in preparation).

\bibitem[GTW]{GlattTemam2011}
N.~Glatt{-}Holtz, R.~Temam, and C.~Wang, \emph{Numerical analysis of the
  stochastic {N}avier-{S}tokes equations: Stability and convergence results},
  (in preparation).

\bibitem[GV14]{GlattVicol2011}
N.~Glatt{-}Holtz and Vlad~C. Vicol, \emph{Local and global existence of smooth
  solutions for the stochastic {E}uler equations with multiplicative noise},
  Ann. Probab. \textbf{42} (2014), no.~1, 80--145. \MR{3161482}

\bibitem[GZ08]{GlattZiane1}
N.~Glatt{-}Holtz and M.~Ziane, \emph{The stochastic primitive equations in two
  space dimensions with multiplicative noise}, Discrete Contin. Dyn. Syst. Ser.
  B \textbf{10} (2008), no.~4, 801--822. \MR{MR2434911}

\bibitem[Has76]{Hasselmann1}
K.~Hasselmann, \emph{Stochastic climate models. part i: Theory}, Tellus
  \textbf{28} (1976), 474--485.

\bibitem[Kob06]{Kobelkov}
G.~M. Kobelkov, \emph{Existence of a solution `in the large' for the 3{D}
  large-scale ocean dynamics equations}, C. R. Math. Acad. Sci. Paris
  \textbf{343} (2006), no.~4, 283--286. \MR{MR2245395}

\bibitem[Kob07]{Kobelkov2007}
\bysame, \emph{Existence of a solution ``in the large'' for ocean dynamics
  equations}, J. Math. Fluid Mech. \textbf{9} (2007), no.~4, 588--610.
  \MR{2374160 (2009e:35213)}

\bibitem[KP92]{KloedenPlaten1992}
P.~Kloeden and E.~Platen, \emph{Numerical solution of stochastic differential
  equations}, Applications of Mathematics (New York), vol.~23, Springer-Verlag,
  Berlin, 1992. \MR{1214374 (94b:60069)}

\bibitem[KRN65]{KuratowskiRyllNardzewski1965}
K.~Kuratowski and C.~Ryll-Nardzewski, \emph{A general theorem on selectors},
  Bull. Acad. Polon. Sci. S{\'e}r. Sci. Math. Astronom. Phys. \textbf{13}
  (1965), 397--403. \MR{0188994 (32 \#6421)}

\bibitem[KS91]{KaratzasShreve}
I.~Karatzas and S.~E. Shreve, \emph{Brownian motion and stochastic calculus},
  second ed., Graduate Texts in Mathematics, vol. 113, Springer-Verlag, New
  York, 1991. \MR{MR1121940 (92h:60127)}

\bibitem[KZ07]{ZianeKukavica}
I.~Kukavica and M.~Ziane, \emph{On the regularity of the primitive equations of
  the ocean}, Nonlinearity \textbf{20} (2007), no.~12, 2739--2753.
  \MR{MR2368323}

\bibitem[LL79]{LeslieQuarini1}
D.~C. Leslie and Quarini~G. L., \emph{The application of turbulence theory to
  the formulation of subgrid modelling procedures}, Journal of Fluid Mechanics
  \textbf{91} (1979), 65--91.

\bibitem[LTW92a]{LionsTemamWang2}
J.-L. Lions, R.~Temam, and S.~H. Wang, \emph{New formulations of the primitive
  equations of atmosphere and applications}, Nonlinearity \textbf{5} (1992),
  no.~2, 237--288. \MR{MR1158375 (93e:35088)}

\bibitem[LTW92b]{LionsTemamWang1}
\bysame, \emph{On the equations of the large-scale ocean}, Nonlinearity
  \textbf{5} (1992), no.~5, 1007--1053. \MR{MR1187737 (93k:86004)}

\bibitem[LTW93]{LionsTemamWang3}
J.-L. Lions, R.~Temam, and S.~Wang, \emph{Models for the coupled atmosphere and
  ocean. ({CAO} {I},{II})}, Comput. Mech. Adv. \textbf{1} (1993), no.~1, 120.
  \MR{MR1252502 (94m:35247)}

\bibitem[MS02]{MenaldiSritharan}
J.-L. Menaldi and S.~S. Sritharan, \emph{Stochastic 2-{D} {N}avier-{S}tokes
  equation}, Appl. Math. Optim. \textbf{46} (2002), no.~1, 31--53.
  \MR{MR1922695 (2003k:35190)}

\bibitem[MT92]{MasonThomson}
P.~J. Mason and D.~J. Thomson, \emph{Stochastic backscatter in large-eddy
  simulations of boundary layers}, Journal of Fluid Mechanics \textbf{242}
  (1992), 51--78.

\bibitem[MT98]{MarionTemam1}
M.~Marion and R.~Temam, \emph{Navier-{S}tokes equations: theory and
  approximation}, Handbook of numerical analysis, {V}ol. {VI}, Handb. Numer.
  Anal., VI, North-Holland, Amsterdam, 1998, pp.~503--688. \MR{1665429
  (2000a:76002)}

\bibitem[{\O}ks03]{Oksendal1}
B.~{\O}ksendal, \emph{Stochastic differential equations}, sixth ed.,
  Universitext, Springer-Verlag, Berlin, 2003, An introduction with
  applications. \MR{MR2001996 (2004e:60102)}

\bibitem[PE08]{PenlandEwald1}
C{\'e}cile Penland and Brian~D. Ewald, \emph{On modelling physical systems with
  stochastic models: diffusion versus {L}\'evy processes}, Philos. Trans. R.
  Soc. Lond. Ser. A Math. Phys. Eng. Sci. \textbf{366} (2008), no.~1875,
  2457--2476. \MR{MR2415440 (2009g:60111)}

\bibitem[Ped82]{Pedlosky}
J.~Pedlosky, \emph{Geophysical fluid dynamics}, Springer Verlag, 1982.

\bibitem[Pen03]{Penland2003}
C~Penland, \emph{A stochastic approach to nonlinear dynamics: A review},
  Bulletin of the American Meteorological Society \textbf{84} (2003),
  ES43--ES51.

\bibitem[PR07]{PrevotRockner}
C.~Pr{\'e}v{\^o}t and M.~R{\"o}ckner, \emph{A concise course on stochastic
  partial differential equations}, Lecture Notes in Mathematics, vol. 1905,
  Springer, Berlin, 2007. \MR{MR2329435}

\bibitem[PS95]{PenlandSardeshmukh}
C.~Penland and P.~D. Sardeshmukh, \emph{The optimal growth of tropical sea
  surface temperature anomalies.}, Journal of climate \textbf{8} (1995), no.~8,
  1999--2024.

\bibitem[PTZ08]{PetcuTemamZiane}
M.~Petcu, R.~Temam, and M.~Ziane, \emph{Some mathematical problems in
  geophysical fluid dynamics}, Special Volume on Computational Methods for the
  Atmosphere and the Oceans, Handbook of Numerical Analysis, vol.~14, Elsevier,
  2008, pp.~577--750.

\bibitem[Ric07]{Richardson1}
Lewis~F. Richardson, \emph{Weather prediction by numerical process}, second
  ed., Cambridge Mathematical Library, Cambridge University Press, Cambridge,
  2007, With a foreword by Peter Lynch. \MR{2358797 (2008g:86013)}

\bibitem[Ros77]{Rose1}
H.~A. Rose, \emph{Eddy diffusivity, eddy noise and subgrid-scale modelling},
  Journal of Fluid Mechanics \textbf{81} (1977), 719--734.

\bibitem[RTT06]{RozovskiiTemamTribbia_Conf}
B.~Rozovskii, R.~Temam, and J.~Tribbia, AIM workshop: Mathematical and
  Geophysical Fluid Dynamics: Analytical and Stochastic Methods, Palo Alto
  (2006).

\bibitem[RTT09]{RousseauTemamTribbia}
Antoine Rousseau, Roger~M. Temam, and Joseph~J. Tribbia, \emph{Boundary value
  problems for the inviscid primitive equations in limited domains}, Handbook
  of numerical analysis. {V}ol. {XIV}. {S}pecial volume: computational methods
  for the atmosphere and the oceans, Handb. Numer. Anal., vol.~14,
  Elsevier/North-Holland, Amsterdam, 2009, pp.~481--575. \MR{2454280
  (2010c:86009)}

\bibitem[RY99]{RevuzYor}
Daniel Revuz and Marc Yor, \emph{Continuous martingales and {B}rownian motion},
  third ed., Grundlehren der Mathematischen Wissenschaften [Fundamental
  Principles of Mathematical Sciences], vol. 293, Springer-Verlag, Berlin,
  1999. \MR{1725357 (2000h:60050)}

\bibitem[Sch73]{Schwartz}
Laurent Schwartz, \emph{Radon measures on arbitrary topological spaces and
  cylindrical measures}, Published for the Tata Institute of Fundamental
  Research, Bombay by Oxford University Press, London, 1973, Tata Institute of
  Fundamental Research Studies in Mathematics, No. 6. \MR{0426084 (54 \#14030)}

\bibitem[Tem83]{Temamfirst}
Roger Temam, \emph{Navier-{S}tokes equations and nonlinear functional
  analysis}, CBMS-NSF Regional Conference Series in Applied Mathematics,
  vol.~41, Society for Industrial and Applied Mathematics (SIAM), Philadelphia,
  PA, 1983. \MR{764933 (86f:35152)}

\bibitem[Tem95]{Temam4}
R.~Temam, \emph{Navier-{S}tokes equations and nonlinear functional analysis},
  second ed., CBMS-NSF Regional Conference Series in Applied Mathematics,
  vol.~66, Society for Industrial and Applied Mathematics (SIAM), Philadelphia,
  PA, 1995. \MR{MR1318914 (96e:35136)}

\bibitem[Tem01]{Temam1}
\bysame, \emph{Navier-{S}tokes equations: Theory and numerical analysis}, AMS
  Chelsea Publishing, Providence, RI, 2001, Reprint of the 1984 edition.
  \MR{MR1846644 (2002j:76001)}

\bibitem[Twa96]{Twardowska1996}
K.~Twardowska, \emph{An approximation theorem of {W}ong-{Z}akai type for
  stochastic {N}avier-{S}tokes equations}, Rend. Sem. Mat. Univ. Padova
  \textbf{96} (1996), 15--36. \MR{1438286 (98c:60070)}

\bibitem[TZ06]{TessitoreZabczyk2006}
G.~Tessitore and J.~Zabczyk, \emph{Wong-{Z}akai approximations of stochastic
  evolution equations}, J. Evol. Equ. \textbf{6} (2006), no.~4, 621--655.
  \MR{2267702 (2008d:60088)}

\bibitem[WR84]{HorsthemkeLefever1}
Horsthemke W. and Lefever R., \emph{Noise-induced transitions: Theory and
  applications in physics, chemistry and biology}, Springer-Verlag, 1984.

\bibitem[WZ65]{WongZakai1965}
E.~Wong and M.~Zakai, \emph{On the relation between ordinary and stochastic
  differential equations}, Internat. J. Engrg. Sci. \textbf{3} (1965),
  213--229. \MR{0183023 (32 \#505)}

\bibitem[ZF10]{ZidikheriFrederiksen}
M.~J. Zidikheri and J.~S. Frederiksen, \emph{Stochastic subgrid-scale modelling
  for non-equilibrium geophysical flows}, Philosophical Transactions of the
  Royal Society A: Mathematical, Physical and Engineering Sciences \textbf{368}
  (2010), no.~1910, 145--160.

\end{thebibliography}
\end{footnotesize}

\vspace{.3in}

\noindent Nathan Glatt-Holtz\\ {\footnotesize
Department of Mathematics\\
Virginia Polytechnic Institute and State University\\
Web: \url{http://www.math.vt.edu/people/negh/}\\
 Email: \url{negh@vt.edu}} \\[.3cm]

\noindent Roger Temam\\ {\footnotesize
Department of Mathematics\\
Indiana University\\
Web: \url{http://mypage.iu.edu/~temam/}\\
 Email: \url{temam@indiana.edu}} \\[.3cm]

\noindent Chuntian Wang\\ {\footnotesize
Department of Mathematics\\
Indiana University\\
Web: \url{http://mypage.iu.edu/~wang211/}\\
 Email: \url{wang211@umail.iu.edu}}

\end{document}